\definecolor{darkblue}{rgb}{0.0, 0.0, 0.55}
\definecolor{bordeaux}{rgb}{0.34, 0.01, 0.1}
 \newtheorem{theorem}{Theorem}[section]
 \newtheorem{corollary}[theorem]{Corollary}
 \newtheorem{lemma}[theorem]{Lemma}{\rm}
 \newtheorem{proposition}[theorem]{Proposition}
 \newtheorem{definition}[theorem]{Definition}{\rm}
 \newtheorem{assumption}[theorem]{Assumption}
 \newtheorem{remark}[theorem]{Remark}
 \newtheorem{example}[theorem]{Example}
\newtheorem{algorithm}[theorem]{Algorithm}
\numberwithin{equation}{section}
\newcommand{\op}[1]{\operatorname{#1}}
\DeclareMathOperator{\II}{II}
\DeclareMathOperator{\cdeg}{cdeg}
\DeclareMathOperator{\Sym}{Sym}
\DeclareMathOperator{\Trace}{tr}
\DeclareMathOperator{\supp}{supp}
\DeclareMathOperator{\rank}{rank}
\DeclareMathOperator{\sparse}{sparse}
\DeclareMathOperator{\sohs}{sohs}
\newif\ifcomment
\begin{document}
\def\cA{\mathcal A}
\def\cH{\mathcal H}
\def\cM{\mathcal M}
\def\red{\color{red}}
\def\bl{\color{blue}}
\def\ora{\color{orange}}
\def\green{\color{green}}
\def\br{\color{brown}}
\newcommand{\realtofloat}{\mathtt{Real2Float}}
\newcommand{\sparsepop}{\mathtt{SparsePOP}}
\newcommand{\ncsostools}{\mathtt{NCSOStools}}
\newcommand{\ncpoltosdpa}{\mathtt{Ncpol2sdpa}}
\newcommand{\gloptipoly}{\mathtt{Gloptipoly}}
\def\la{\langle}
\def\ra{\rangle}
\def\e{{\rm e}}
\def\x{\mathbf{x}}
\def\by{\mathbf{y}}
\def\bz{\mathbf{z}}
\def\F{\mathcal{F}}
\def\R{\mathbb{R}}
\def\Mbb{\mathbb{M}}
\def\Sbb{\mathbb{S}}
\def\T{\mathbf{T}}
\def\N{\mathbb{N}}
\def\K{\mathbf{K}}
\def\bK{\overline{\mathbf{K}}}
\def\Q{\mathbf{Q}}
\def\M{\mathbf{M}}
\def\O{\mathbf{O}}
\def\C{\mathbf{C}}
\def\P{\mathbf{P}}
\def\Z{\mathbb{Z}}
\def\H{\mathcal{H}}
\def\A{\mathbf{A}}
\def\W{\mathbf{W}}
\def\bfone{\mathbf{1}}
\def\V{\mathbf{V}}
\def\AA{\overline{\mathbf{A}}}
\def\c{\mathbf{C}}
\def\L{\mathbf{L}}
\def\bS{\mathbf{S}}
\def\H{\mathcal{H}}
\def\I{\mathbf{I}}
\def\Y{\mathbf{Y}}
\def\X{\mathbf{X}}
\def\G{\mathbf{G}}
\def\Bbb{\mathbb{B}}
\def\Dbb{\mathbb{D}}
\def\f{\mathbf{f}}
\def\z{\mathbf{z}}
\def\y{\mathbf{y}}
\def\d{\hat{d}}
\def\bx{\mathbf{x}}
\def\y{\mathbf{y}}
\def\h{\mathbf{h}}
\def\v{\mathbf{v}}
\def\u{\mathbf{u}}
\def\g{\mathbf{g}}
\def\w{\mathbf{w}}
\def\b{\mathcal{B}}
\def\a{\mathbf{a}}
\def\q{\mathbf{q}}
\def\u{\mathbf{u}}
\def\s{\mathcal{S}}
\def\cc{\mathcal{C}}
\def\co{{\rm co}\,}
\def\cp{{\rm CP}}
\def\tg{\tilde{f}}
\def\tx{\tilde{\x}}
\def\supmu{{\rm supp}\,\mu}
\def\supnu{{\rm supp}\,\nu}
\def\m{\mathcal{M}}
\def\bR{\mathbf{R}}
\def\om{\mathbf{\Omega}}
\def\c{\mathbf{c}}
\def\s{\mathcal{S}}
\def\k{\mathcal{K}}
\def\la{\langle}
\def\ra{\rangle}
\def\opt{\text{opt}}
\def\cyc{\overset{\text{cyc}}{\sim}}
\def\smileL{\overset{\smallsmile}{L}}
\def\blambda{{\boldsymbol{\lambda}}}
\def\bsigma{{\boldsymbol{\sigma}}}
\def\RX{\R \langle \underline{X} \rangle}
\def\uX{\underline X}
\newcommand{\RXI}[1]{\R \langle \underline{X}(I_{#1}) \rangle }
\def\SigmaX{\Sigma \langle \underline{X} \rangle}
\newcommand{\SigmaXI}[1]{\Sigma \langle \underline{X}(I_{#1}) \rangle }
\def\SymRX{\Sym \RX}
\newcommand{\SymRXI}[1]{\Sym \R \langle \underline{X}(I_{#1}) \rangle }
\def\ov{\overline{o}}
\def\und{\underline{o}}
\newcommand{\victor}[1]{\todo[inline,color=purple!30]{VM: #1}}
\newcommand{\victorshort}[1]{\todo[inline,color=purple!30]{VM: #1}}
\newcommand{\igor}[1]{\Ig{#1}}
\newcommand{\igorshort}[1]{\todo[color=brown!30]{IK: #1}}
\newcommand{\janez}[1]{{\color{red} Janez: #1}}
\newcommand{\janezshort}[1]{\todo[color=red!30]{TdW: #1}}

\colorlet{commentcolour}{green!50!black}
\newcommand{\comment}[3]{%
\ifcomment%
	{\color{#1}\bfseries\sffamily(#3)%
	}%
	\marginpar{\textcolor{#1}{\hspace{3em}\bfseries\sffamily #2}}%
	\else%
	\fi%
}
\newcommand{\Ig}[1]{
	\comment{magenta}{I}{#1}
}
\newcommand{\Ja}[1]{
	\comment{green}{J}{#1}
}
\newcommand{\Vi}[1]{
	\comment{blue}{V}{#1}
}
\newcommand{\idea}[1]{\textcolor{red}{#1(?)}}

\newcommand{\Expl}[1]{
	{\tag*{\text{\small{\color{commentcolour}#1}}}%
	}
}

\renewcommand{\algorithmicrequire}{\textbf{Input:}}
\renewcommand{\algorithmicensure}{\textbf{Output:}}
\newcommand{\sparsegns}{\texttt{SparseGNS}}
\newcommand{\sparseeiggns}{\texttt{SparseEigGNS}}
\newcommand{\eigmin}{\texttt{NCeigMin}}
\newcommand{\eigminsparse}{\texttt{NCeigMinSparse}}
\def\nsdp{n_{\text{sdp}}}
\def\msdp{m_{\text{sdp}}}
\newcommand{\newncsparse}[1]{{{\color{blue}#1}}}
\newcommand{\newnc}[1]{{{\color{black}#1}}}
\title[Sparse Noncommutative Polynomial Optimization]{Sparse  Noncommutative Polynomial Optimization}

\author{Igor Klep \and Victor Magron \and Janez Povh}

\address{Igor Klep: Faculty of Mathematics and Physics,  University of Ljubljana, Slovenia}
\email{igor.klep@fmf.uni-lj.si}
\thanks{IK was supported by the 
Slovenian Research Agency grants J1-8132, N1-0057 and P1-0222. Partially supported 
by the 
Marsden Fund Council of the Royal Society of New Zealand.}
\address{Victor Magron: LAAS-CNRS and Institute of Mathematics, Toulouse, France}
\email{vmagron@laas.fr}
\thanks{VM was supported by the FMJH Program PGMO (EPICS project) and  EDF, Thales, Orange et Criteo, as well as from the Tremplin ERC Stg Grant ANR-18-ERC2-0004-01 (T-COPS project). }
\address{Janez Povh: Faculty of Mechanical Engineering, University of Ljubljana, Slovenia}
\email{janez.povh@lecad.fs.uni-lj.si}
\thanks{JP was supported bt the Slovenian Research Agency program P2-0256 and grants J1-8132, J1-8155, N1-0057 and N1-0071.}
\date{}

\begin{abstract}
This article focuses on optimization of polynomials in noncommuting variables, while taking into account sparsity in the input data. 
A converging hierarchy of semidefinite relaxations for eigenvalue and trace optimization is provided. 
This hierarchy is  a noncommutative analogue of results due to Lasserre [SIAM J. Optim. 17(3) (2006), pp. 822--843] and Waki et al. [SIAM J. Optim. 17(1) (2006), pp. 218--242]. 
The Gelfand-Naimark-Segal (GNS) construction is applied to extract optimizers if flatness and irreducibility conditions are satisfied.
Among the main techniques used are amalgamation results from operator algebra.
The theoretical results are utilized to compute lower bounds on minimal eigenvalue of noncommutative polynomials from the literature.
\end{abstract}

\keywords{noncommutative polynomial; sparsity pattern; semialgebraic set; semidefinite programming; eigenvalue optimization; trace optimization; GNS construction}

\subjclass[2010]{90C22; 47N10; 13J10}

\maketitle

\section{Introduction}
\label{sec:intro}
The goal of this article is to handle a specific class of sparse  polynomial optimization problems with noncommuting variables (e.g.,~polynomials in matrices). 
Applications of interest include control theory and linear systems in engineering~\cite{skelton1997unified}, quantum theory and quantum information science~\cite{navascues2008convergent}. 
For example, in the latter context, noncommutative polynomial optimization   provides upper bounds on the maximum violation level of Bell inequalities~\cite{pal2009quantum}.
Further motivations relate to the generalized Lax conjecture~\cite{lax1957differential}, with computational proof attempts relying on noncommutative sums of squares (in Clifford algebras)~\cite{netzer2014hyperbolic}.
The problem of verifying noncommutative polynomial inequalities has also occurred in a conjecture formulated by Bessis, Moussa and Villani (BMV) in 1975~\cite{bessis1975monotonic}, and restated by Lieb and Seiringer~\cite{lieb2004equivalent}. 
The initial conjecture boils down to verifying that the (univariate) polynomial $t \mapsto \Trace(A+tB)^m$ has only nonnegative coefficients, for all positive semidefinite matrices $A$ and $B$, and all $m \in \N$.
Even though the BMV conjecture has been established by Stahl~\cite{stahl2013proof} for all $m$, one can rely on computational proofs for a fixed value of $m$.
Schweighofer and the first author derived a computational proof~\cite{klep2008sums} of the conjecture for $m \leq 13$. 
Recently, noncommutative polynomial optimization has been used in~\cite{Gribling18} to study optimization problems related to bipartite quantum correlations, and in~\cite{Gribling19} to derive hierarchies of lower bounds for several matrix factorization ranks, including nonnegative rank, positive semidefinite rank as well as their symmetric analogues.

In the commutative setting, \emph{polynomial optimization} focuses on minimizing or maximizing a polynomial over a \emph{semialgebraic} set, that is, a set defined by a finite conjunction/disjunction of polynomial inequalities. 
In general, computing the {exact} solution of a polynomial optimization problem is an NP-hard problem~\cite{Laurent:Survey}. 
In practice, one can at least compute an approximation of the solution by considering a relaxation of the problem.  
In the seminal 2001 paper~\cite{Las01sos}, Lasserre  introduced a nowadays famous hierarchy of relaxations, called the  {\emph{moment-sums of squares hierarchy}} allowing us to obtain a converging sequence of lower bounds for the minimum of a polynomial over a compact semialgebraic set. 
Each lower bound is computed by solving a semidefinite program (SDP). 
In SDP, one optimizes a linear function under a linear matrix inequality constraint. 
SDP itself appears in a wide range of applications (combinatorial optimization~\cite{laurent2005semidefinite}, control theory~\cite{BEFB94}, matrix completion~\cite{Lau09}, etc.) and can be solved efficiently (up to a few thousand optimization variables) by freely available software, e.g.~SeDuMi~\cite{Sturm98usingsedumi}, SDPT3~\cite{SDPT3}, SDPA~\cite{sdpa} or Mosek~\cite{moseksoft}. 
For optimization problems involving $n$-variate polynomials of degree less than $d$, the size of the matrices involved at step $k \geq d$ of Lasserre's hierarchy of SDP relaxations is proportional to $\binom{n+k}{n}$.
Therefore, the size of the SDP problems arising from the hierarchy grows rapidly.\looseness=-1

For unconstrained problems involving a large number of variables $n$, a remedy consists of reducing the size of the SDP matrices by discarding the monomials which never appear in the support of the SOS decompositions. 
This technique, based on a result by Reznick~\cite{Reznick78},  consists of computing the \emph{Newton polytope} of the input polynomial (that is, the convex hull of the support of this polynomial) and selecting only monomials with support lying in half of this polytope.
For constrained optimization, another workaround is based on  exploiting a potential sparsity/symmetry pattern arising in the input polynomials.
In \cite{Las06} (see also~\cite{Waki06} and the related $\sparsepop$ solver~\cite{WakiKKMS08}), the author derives a sparse version of Putinar's representation~\cite{Putinar1993positive} for polynomials positive on compact semialgebraic sets. 
{See also \cite{grimm2007note} for a simpler proof.}
This variant can be used for cases where the objective function can be written as a sum of polynomials, each of them involving a  small number of variables. 
Sparse polynomial optimization techniques enable us to successfully handle various concrete applications. 
The frameworks~\cite{toms17,toms18}, coming with the $\realtofloat$ software library, rely on these techniques to produce a hierarchy of upper bounds converging to the absolute roundoff error of a numerical program involving polynomial operations. 
In energy networks, it is now possible to compute the solution of large-scale power flow problems with up to thousand variables~\cite{Josz16}.
In ~\cite{tacchi2019exploiting}, the authors derive the sparse analogue of~\cite{HLS09vol} to obtain a hierarchy of upper bounds for the volume of large-scale semialgebraic sets. 
\newnc{Recently, sparse polynomial optimization has been used to bound the Lipschitz constants of ReLU networks \cite{chen} and to handle sparse positive definite forms \cite{sparseReznick}.}
In the same spirit, the symmetry pattern of the  polynomial optimization problem can be exploited~\cite{Riener13Symmetries}.
More recent progress focused on the use of alternative hierarchies, including the so-called {\emph{bounded degree SOS hierarchy} (BSOS)}~\cite{lasserre2017bounded}. 
Here, one represents a positive polynomial as the sum of two terms: an SOS polynomial of a priori fixed degree, and a term lying in the set of {\emph{Krivine-Stengle}} representations~\cite{krivineanneaux}, that is, a combination of positive linear cross-products of the polynomials involved in the set of constraints.
The BSOS hierarchy can handle bigger instances than the standard moment-SOS hierarchy. 
In addition, sparsity can be exploited in the same way as for the sparse SOS hierarchy, which allows us to tackle even larger problems~\cite{weisser2018sparse}.

In the noncommutative context, a given noncommutative polynomial in $n$ variables and degree $d$ is positive semidefinite if and only if it decomposes as a \emph{sum of hermitian squares} (SOHS)~\cite{Helton02,McCullSOS}.  
In practice, an SOHS decomposition can be computed by solving an SDP of size $O(n^d)$, which is even larger than the size of the matrices involved in the commutative case.
SOHS decompositions are also used for constrained optimization, either to minimize eigenvalues or traces of noncommutative polynomial objective functions, under noncommutative polynomial (in)equality constraints. 
The optimal value of such constrained problems can be approximated, as closely as desired, while relying on the noncommutative analogue of Lasserre's hierarchy~\cite{pironio2010convergent,cafuta2012constrained,nctrace}.
The $\ncsostools$~\cite{cafuta2011ncsostools,burgdorf16} library can compute such approximations for optimization problems involving polynomials in noncommuting variables.
By comparison with the commutative case, the size $O(n^k)$ of the SDP matrices at a given step $k$ of the noncommutative hierarchy becomes intractable even faster, typically for $k,n \simeq 6$ on a standard laptop.

A remedy for unconstrained problems is to rely on the adequate noncommutative analogue of the standard Newton polytope method, which is called the \emph{Newton chip method} (see, e.g.~\cite[\S2.3]{burgdorf16}) and can be further improved with the \emph{augmented Newton chip method} (see, e.g.,~\cite[\S2.4]{burgdorf16}), by removing certain terms which can never appear in an SOHS decomposition of a given input.
As in the commutative case, the Newton polytope method cannot be applied for constrained problems.
When one cannot go from step $k$ to step $k+1$ in the hierarchy because of the computational burden, one can always consider matrices indexed by all terms of degree $k$ plus a fixed percentage of terms of degree $k+1$. 
This is used for instance to compute tighter upper bounds for maximum violation levels of Bell inequalities~\cite{pal2009quantum}.
Another trick, implemented in the $\ncpoltosdpa$ library~\cite{wittek2015algorithm}, consists of exploiting simple equality constraints, such as ``$X^2 = Y$'', to derive substitution rules for variables involved in the SDP relaxations. 
Similar substitutions are performed in the commutative case by $\gloptipoly$ 3~\cite{gloptipoly3}.

Apart from such heuristic procedures, there is, to the best of our knowledge, no general method to exploit
additional structure, such as
  sparsity, of (un)constrained noncommutative polynomial optimization problems.

\paragraph{\textbf{Contributions}} 
We state and prove in Section~\ref{sec:sparsePutinar} a sparse variant of the noncommutative version of Putinar's Positivstellensatz, under the same sparsity pattern assumptions as the ones used in the commutative case~\cite{Las06,Waki06};
these conditions are known as the \emph{running intersection property} (RIP) in graph theory~\cite{fukuda2001exploiting,nakata2003exploiting}.
Our proof relies on amalgamation results for operator algebras.
Then, we present in Section~\ref{sec:extract} 
the sparse Gelfand-Naimark-Segal (GNS) construction 
yielding  representations for linear functionals 
positive w.r.t.~sparsity. This allows us
to extract minimizers, providing that flatness and  irreducibility conditions hold.
We rely on this sparse representation to design algorithms performing eigenvalue optimization (Section~\ref{sec:eig}) and trace optimization of noncommutative sparse polynomials (Section~\ref{sec:trace}), both in the unconstrained and constrained case.
Along the way we exhibit an example showing that the Helton-McCullough \cite{Helton02,McCullSOS} Sum of Squares theorem (every positive nc polynomial is a sum of hermitian squares) fails in the sparse setting, see Lemma \ref{lemma:nosparseHelton}.
Finally, we provide in Section~\ref{sec:bench} experimental comparisons between the bounds given by the dense relaxations and the ones produced by our algorithms, currently implemented in the $\ncsostools$  software library.

\section{Notation and Definitions}
\label{sec:prelim}
This section gives the basic definitions and introduces notation used in the rest of the article.
\subsection{Noncommutative polynomials}
\label{sec:prelim_nc}
Let us denote by $\Mbb_n(\R)$ (resp.~$\Sbb_n$) the space of all real (resp.~symmetric) matrices of order $n$, and by $\Sbb_n^k$ the set of $k$-tuples $\underline{A} = (A_1,\dots,A_k)$ of symmetric matrices $A_i$ of order $n$.
{The normalized trace of an $n\times n$ matrix $A=(a_{i,j})_{i,j}$ is given by $\Trace A = \frac{1}{n} \sum_{i=1}^n a_{i,i}$.
Given $A \in \Sbb_n$, we write $A \succeq 0$ (resp.~$A \succ 0$) when $A$ is positive semidefinite (resp.~positive definite), i.e., has only nonnegative (resp.~positive) eigenvalues.}
Let $\I_n$ stands for the identity matrix of order $n$.
For a fixed $n \in \N$, we consider a finite alphabet $X_1,\dots,X_n$ of {symmetric letters} and generate all possible words of finite length in these letters. 
The empty word is denoted by 1. 
The resulting set of words is  $\langle \underline{X} \rangle$, with $\underline{X} = (X_1,\dots, X_n)$. 
We denote by $\RX$ the set of real polynomials in noncommutative variables, abbreviated as \textit{nc polynomials}.
A \textit{monomial} is an element of the form $a_w w$, with $a_w \in \R \backslash \{0\}$ and $w \in \langle \underline{X} \rangle$. 
The \textit{degree} of an nc polynomial $f \in \RX$ is the length of the longest word involved in $f$.
For $d \in \N$, $\langle \underline{X} \rangle_d$ is the set of all words of degree at most $d$.
Let us denote by $\W_d$ the vector of all words of $\langle \underline{X} \rangle_d$ w.r.t.~to the lexicographic order. 
Note that the dimension of $\RX_d$ is the length of $\W_d$, which is $\bsigma(n,d) := \sum_{i=0}^d n^i = \frac{n^{d+1}-1}{n-1}$. 
The set $\RX$ is equipped with the involution $\star$ that fixes $\R \cup \{X_1,\dots,X_n\}$ point-wise and reverses words, so that $\RX$ is the $\star$-algebra freely generated by $n$ symmetric letters $X_1,\dots,X_n$. 
The set of all \textit{symmetric elements} is defined as $\SymRX := \{f \in \RX : f = f^\star  \}$.
~\\
\paragraph{\textbf{Sums of hermitian squares}}
An nc polynomial of the form $g^\star g$ is called a {\em hermitian square}. 
A given $f \in \RX$ is a {\em sum of hermitian squares} (SOHS) if there exist nc polynomials $h_1,\dots,h_r \in \RX$, with $r \in \N$, such that $f = h_1^\star h_1 + \dots + h_r^\star h_r$. 
Let $\SigmaX$ stands for the set of SOHS. We denote by $\SigmaX_{d} \subseteq \SigmaX$ the set of SOHS polynomials of degree at most $2 d$.
We now recall how to check whether a given $f \in \SymRX$ is an SOHS. 
The existing procedure, known as the \emph{Gram matrix method}, relies on the following proposition (see, e.g.,~\cite[\S2.2]{Helton02}):
\begin{proposition}
\label{prop:ncGram}
Assume that $f \in \SymRX_{2 d}$. 
Then $f \in \SigmaX$ if and only if there exists $G_f \succeq 0$ satisfying
\begin{align}
\label{eq:ncGram}
f = \W_d^\star \, G_f \, \W_d \,.
\end{align}
Conversely, given such $G_f \succeq 0$ with rank $r$, one can construct $g_1,\dots,g_r \in \RX_d$ such that $f = \sum_{i=1}^r g_i^\star g_i$.
\end{proposition}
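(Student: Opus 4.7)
The plan is to prove both directions by moving between the PSD Gram matrix and explicit hermitian square decompositions via a rank factorization of $G_f$.

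For the implication ``$G_f \succeq 0 \Longrightarrow f \in \SigmaX$'' (which simultaneously yields the explicit construction of the $g_i$'s claimed in the converse part), I would first factor $G_f = \sum_{i=1}^r v_i v_i^\top$, say via a spectral or Cholesky decomposition, where $r = \rank G_f$ and $v_i \in \R^{\bsigma(n,d)}$. Setting $g_i := v_i^\top \W_d \in \RX_d$, I would substitute into $f = \W_d^\star G_f \W_d$ and expand linearly to obtain $f = \sum_{i=1}^r \W_d^\star v_i v_i^\top \W_d = \sum_{i=1}^r g_i^\star g_i$, where I use that $\star$ acts entrywise on the coefficient vector and reverses the words in $\W_d$. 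This step is essentially routine linear algebra once the indexing is set up.

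For the implication ``$f \in \SigmaX \Longrightarrow G_f \succeq 0$'', I would start from a decomposition $f = \sum_{i=1}^r h_i^\star h_i$ with $h_i \in \RX$. The key point is to upgrade this into a decomposition in which every $h_i$ has degree at most $d$, and then to encode the $h_i$'s in the basis $\W_d$. If each $h_i = v_i^\top \W_d$ for some coefficient vector $v_i \in \R^{\bsigma(n,d)}$, then $f = \W_d^\star \bigl(\sum_i v_i v_i^\top\bigr) \W_d$, and $G_f := \sum_i v_i v_i^\top \succeq 0$ does the job. Uniqueness of the expression relative to the linearly independent words of degree $\leq 2d$ guarantees this Gram matrix represents $f$.

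The main obstacle is the degree-reduction step: showing that an SOHS representation of an nc polynomial of degree $\leq 2d$ can be realized with summands of degree $\leq d$. I would argue by isolating the top-degree part. Let $e := \max_i \deg h_i$ and write $h_i = \tilde h_i + (\text{lower order})$ with $\tilde h_i$ the homogeneous component of degree $e$ (possibly zero for some $i$). Then the homogeneous component of degree $2e$ of $\sum_i h_i^\star h_i$ is exactly $\sum_i \tilde h_i^\star \tilde h_i$. If $e > d$, this sum lives in degree $> 2d = \deg f$ and must vanish. One then uses that a sum of hermitian squares in $\RX$ equals zero iff each summand is zero (a standard fact provable by evaluating on tuples of symmetric matrices, or by comparing the leading coefficient vectors and invoking positive semidefiniteness of $\sum_i \tilde v_i \tilde v_i^\top$ where $\tilde h_i = \tilde v_i^\top \W_e$). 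This forces each $\tilde h_i = 0$, contradicting the choice of $e$, so $e \leq d$ as required.
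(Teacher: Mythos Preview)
Your argument is correct and is essentially the standard Gram-matrix proof. Note that the paper does not supply its own proof of this proposition; it merely cites \cite[\S2.2]{Helton02}, so there is nothing to compare against beyond saying that your degree-reduction step (using the injectivity of $(u,v)\mapsto u^\star v$ on pairs of words of fixed length, or equivalently evaluating on symmetric matrices) is exactly the usual one. One small cosmetic point: you write ``$2d=\deg f$'' where the hypothesis only gives $\deg f\le 2d$, but the inequality is all you use.
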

Any symmetric matrix $G_f$ (not necessarily positive semidefinite) satisfying~\eqref{eq:ncGram} is called a \emph{Gram matrix} of $f$.

%
~\\
\paragraph{\textbf{Semialgebraic sets and quadratic modules}}
Given a positive integer $m$ and $S = \{s_1,\dots,s_m \} \subseteq \SymRX$, the {\em semialgebraic} set ${D_S}$ associated to $S$ is defined as follows:
\begin{align}
\label{eq:DS}
{D_S} := \bigcup_{k \in \N} \{ \underline{A} = (A_1,\dots,A_n) \in \Sbb_k^n : s_j(\underline{A}) \succeq 0 \,, \quad j=1 \dots, m  \} \,.
\end{align}
When considering only tuples of $N \times N$ symmetric matrices, we use the notation ${D_S^N} := {D_S} \cap \Sbb_N^n$.
The {\em operator semialgebraic set} ${D_S^\infty}$ is the set of all bounded self-adjoint operators $\underline{A}$ {on a Hilbert space $\mathcal{H}$ endowed with a scalar product $\langle \cdot \mid \cdot \rangle$, making $g(\underline{A})$ a positive semidefinite operator for all $g \in S$, i.e., $\langle g(\underline{A}) v \mid v \rangle \geq 0$, for all $v \in \mathcal{H}$.
We say that a noncommutative polynomial $f$ is positive (denoted by $f \succ 0$) on ${D_S^\infty}$  if for all $\underline{A} \in {D_S^\infty}$ the operator $f(\underline{A})$  is positive definite, i.e., $\langle f(\underline{A}) v \mid v \rangle > 0$, for all nonzero $v \in \mathcal{H}$.
} 
%
The {\em quadratic module} ${\cM(S)}$, generated by $S$, is defined by
\begin{align}
\label{eq:MS}
{\cM(S)}:= \left\{ \sum_{i=1}^K  a_i^\star s_i' a_i : K \in \N \,, a_i \in \RX \,, s_i' \in S \cup \{1\}  \right\} \,.
\end{align}
Given $d \in \N$, the truncated {\em quadratic module} ${\cM(S)_d}$ of order $d$, generated by $S$, is 
\begin{align}
\label{eq:MS2d}
{\cM(S)_d} := \left\{ \sum_{i=1}^K  a_i^\star s_i' a_i : K \in \N \,, a_i \in \RX \,, s_i' \in S \cup \{1\} \,, \deg (a_i^\star s_i' a_i) \leq 2 d  \right\} \,.
\end{align}
{Let $\textbf{1}$ stands for the unit polynomial.}
A quadratic module ${\cM}$ is called {\em archimedean} if for each $ a \in \RX$, there exists $N \in {\R^{\geq 0}}$ such that $N \cdot {\textbf{1}} - a^\star a \in {\cM}$. 
{One can show that
this is equivalent to the existence of an $N \in \R^{\geq 0}$ such that $N \cdot \textbf{1} - \sum_{i=1}^n X_i^2 \in \cM$}. 

The noncommutative analog of Putinar's Positivstellensatz \cite{Putinar1993positive} 
describing noncommutative polynomials positive on ${D_S^\infty}$ with archimedean ${\cM(S)}$
is due to Helton and McCullough:
\begin{theorem}[\protect{\cite[Theorem~1.2]{Helton04}}]
\label{th:densePsatz}
Let $S \cup \{f\} \subseteq \SymRX$ and assume that ${\cM(S)}$ is archimedean.
If $f(\underline{A}) \succ 0$ for all $\underline{A} \in {D_S^\infty}$, then $f \in {\cM(S)}$.
\end{theorem}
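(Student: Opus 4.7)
The plan is to argue by contradiction via Hahn--Banach separation followed by a GNS-type construction, which is the standard route for Positivstellens\"atze in the noncommutative setting.

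Suppose toward contradiction that $f \notin \cM(S)$. First I would establish that $\cM(S)$ is a closed convex cone in a suitable topology on $\SymRX$. The archimedean hypothesis is what makes this possible: for every $a \in \RX$ there is $N \in \R^{\geq 0}$ with $N \cdot \mathbf{1} - a^\star a \in \cM(S)$, and one can use this to define a submultiplicative seminorm (morally $\|p\|^2 = \inf\{N : N - p^\star p \in \cM(S)\}$) that agrees with the universal $C^\star$-seminorm on symmetric elements. Once $\cM(S)$ is closed in this norm topology, Hahn--Banach yields a nonzero $\R$-linear functional $L : \SymRX \to \R$ with $L(f) \leq 0$ and $L \geq 0$ on $\cM(S)$. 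Extending $L$ to all of $\RX$ by $L(p) := L(\tfrac{p+p^\star}{2})$ and normalizing so that $L(\mathbf{1}) = 1$, the functional $L$ becomes a \emph{state}: it is positive on $a^\star a$ for every $a \in \RX$ and satisfies the Cauchy--Schwarz-type inequality $|L(a^\star b)|^2 \leq L(a^\star a)\,L(b^\star b)$.

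Next I would carry out the GNS construction from $L$. Set $N_L := \{a \in \RX : L(a^\star a) = 0\}$; this is a left ideal by Cauchy--Schwarz. The quotient $\RX / N_L$ carries the inner product $\langle [a] \mid [b] \rangle := L(b^\star a)$, and its completion is a Hilbert space $\cH$. Left multiplication induces a $\star$-representation $\pi : \RX \to B(\cH)$, and $\xi := [\mathbf{1}]$ is a cyclic unit vector with $L(p) = \langle \pi(p)\xi \mid \xi \rangle$ for all $p$. The archimedean property is crucial here to show that each $\pi(X_i)$ extends to a bounded self-adjoint operator: the bound $N \cdot \mathbf{1} - X_i^2 \in \cM(S)$ forces $\|\pi(X_i)\xi'\|^2 \leq N\|\xi'\|^2$ on the dense subspace, giving boundedness. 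Setting $A_i := \pi(X_i)$ produces a tuple $\underline{A} \in \Sbb^n$ of bounded self-adjoint operators on $\cH$.

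Finally I would check that $\underline{A} \in D_S^\infty$ and derive the contradiction. For each $s \in S$ and $a \in \RX$ one has $L(a^\star s a) \geq 0$, i.e.~$\langle \pi(s)\pi(a)\xi \mid \pi(a)\xi \rangle \geq 0$; since $\pi(\RX)\xi$ is dense in $\cH$, the operator $s(\underline{A}) = \pi(s)$ is positive semidefinite, so $\underline{A} \in D_S^\infty$. By hypothesis $f(\underline{A}) \succ 0$, so $\langle \pi(f)\xi \mid \xi \rangle > 0$ because $\xi \neq 0$; but $L(f) \leq 0$ gives $\langle \pi(f)\xi \mid \xi \rangle \leq 0$, a contradiction.

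The main obstacle is the first step: producing the functional $L$ with the right positivity and continuity. This requires showing that the archimedean quadratic module is closed in an appropriate topology, which typically goes through constructing the universal enveloping $C^\star$-algebra of $\RX / \text{(ideal from } \cM(S)\text{)}$ and exploiting that archimedeanity bounds the $C^\star$-seminorm uniformly. The remaining GNS steps are standard, but one has to be careful that the proof genuinely uses operators on an infinite-dimensional Hilbert space; finite matrix tuples would not suffice, which is exactly why the conclusion is stated in terms of the operator semialgebraic set $D_S^\infty$ rather than $D_S$.
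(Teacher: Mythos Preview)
Your proposal is correct and follows the same Hahn--Banach plus GNS route that underlies the paper's treatment: the paper cites this result from Helton--McCullough without proof, but its proof of Theorem~\ref{th:sparsePsatz} specialized to $p=1$ is exactly your argument, and Theorem~\ref{th:gns} packages the GNS step you describe. One simplification worth noting: you do not need to establish closedness of $\cM(S)$ for the separation---archimedeanity says precisely that $\mathbf{1}$ lies in the \emph{algebraic interior} of $\cM(S)$, so the Eidelheit--Kakutani form of Hahn--Banach applies directly and yields a nonzero $L$ with $L(f)\leq 0$, $L\geq 0$ on $\cM(S)$, and automatically $L(\mathbf{1})>0$; this bypasses the detour through the universal $C^\star$-seminorm that you flag as the main obstacle.
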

\subsection{Sparsity patterns}
\label{sec:prelim_sparse}
Let $I_0 := \{1,\dots,n \}$.
For $p\in \N$ consider $I_1,\dots,I_p \subseteq I_0$ satisfying $\bigcup_{k=1}^p I_k = I_0$. 
Let $n_k$ be the size of $I_k$, for each $k=1,\dots,p$.

We denote by $\langle \underline{X}(I_k) \rangle$ (resp.~$\RXI{k}$) the set of words (resp.~nc polynomials) in the $n_k$ variables $\underline{X}(I_k) = \{X_i : i \in I_k \}$. 
The dimension of $\RXI{k}_d$ is $\bsigma(n_k,d) = \frac{n_k^{d+1}-1}{n_k-1}$. 
Note that $\RXI{0} = \RX$.
We also define $\SymRXI{k} := \SymRX \cap \RXI{k}$, let $\SigmaXI{k}$ stands for the set of SOHS in $\RXI{k}$ and we denote by $\SigmaXI{k}_{d}$ the restriction of $\SigmaXI{k}$ to nc polynomials of degree at most $2 d$.
\if{
Let $\mathcal{I}_k$ be the set of all subsets of $I_k$, for all $k=1,\dots,p$. 
For $u \in \langle \underline{X} \rangle$,  $\supp{u}$ denotes the support of $u$, that is, the set of all $i \in \{1,\dots,n\}$ for which the letter $X_i$ appears in the word $u$.
As an example, if one considers $n = 6$ and $u = X_1^2 X_5  X_6$, then $\supp{u} = \{1,5,6 \}$.
\fi
In the sequel, we will rely on two specific assumptions. The first one is as follows:
\begin{assumption}[Boundedness]
\label{hyp:sparsity}
Let ${D_S}$ be as in~\eqref{eq:DS}. 
There is ${N} \in \R^{> 0}$ such that $\sum_{i=1}^n X_i^2 \preceq N \cdot \textbf{1}$, for all $\underline{X} = (X_1,\dots,X_n) \in {D_S^\infty}$. 
\end{assumption}
Then, Assumption~\ref{hyp:sparsity} implies that $\sum_{j \in I_k} X_j^2 \preceq {N} \cdot {\textbf{1}}$, for all $k=1,\dots,p$. 
Thus we define 
\begin{align}
\label{eq:additional}
s_{m+k} := {N \cdot \textbf{1}} - \sum_{j \in I_k} X_j^2 \,, \quad k=1,\dots,p \,,
\end{align}
and set $m' = m + p$ in order to describe the same set ${D_S}$ again as:
\begin{align}
\label{eq:newDS}
{D_S := \bigcup_{k \in \N} \{ \underline{A} \in \Sbb_k^n } : s_j(\underline{A}) \succeq 0, \quad j=1,\dots,m' \} \,,
\end{align}
{as well as the operator semialgebraic set ${D_S^\infty}$.}

The second assumption is as follows:
\begin{assumption}[RIP]
\label{hyp:sparsityRIP}
Let ${D_S}$ be as in~\eqref{eq:newDS} and let $f \in \RX$. 
The index set $J := \{1, \dots, m' \}$ is partitioned into $p$ disjoint sets $J_1,\dots,J_p$ and the two collections $\{I_1,\dots,I_p\}$ and $\{J_1,\dots,J_p \}$ satisfy:
\begin{enumerate}[\rm (i)]
\item For all $j \in J_k$, $g_j \in \SymRXI{k}$.
\item The objective function can be decomposed as $f = f_1 + \dots + f_p$, with $f_k \in \RXI{k}$, for all $k=1, \dots,p$.
\item The running intersection property (RIP) holds, i.e., for all $k=1,\dots,p-1$, one has
\begin{align}
\label{eq:RIP}
I_{k+1} \cap \bigcup_{j \leq k} I_j \subseteq I_\ell \quad \text{for some } \ell \leq k \,.
\end{align}
\end{enumerate}
\end{assumption}
Even though we assume that $I_1,\dots,I_p$ are explicitly given, one can compute such subsets using the procedure in~\cite{Waki06}. 
Roughly speaking, this procedure consists of two steps.
The first step provides the correlation sparsity pattern (csp) graph of the variables involved in the input polynomial data. 
The second step computes the maximal cliques of a chordal extension of this csp graph.
Even if the computation of all maximal cliques of a graph is an NP hard problem in general, it turns out that this procedure is efficient in practice, due to the properties of chordal graphs   (see, e.g.,~\cite{blair1993introduction} for more details on the properties of chordal graphs).
%
\subsection{Hankel and localizing matrices}
\label{sec:prelim_mat}
To $g \in \SymRX$ and a linear functional $L : \RX_{2 d} \to \R$, one associates the following two matrices:
\begin{enumerate}[(1)]
\item the {\em noncommutative Hankel matrix} ${\newnc{\M_d}(L)}$ is the matrix indexed by words $u,v \in \langle \underline{X} \rangle_d$, with $({\newnc{\M_d}(L)})_{u,v} = L (u^\star v)$;
\item the {\em localizing matrix} ${\newnc{\M_{d - \lceil \deg g / 2 \rceil}}(g L)}$ is the  matrix indexed by words $u,v \in \langle \underline{X} \rangle_{d - \lceil \deg g / 2 \rceil }$, with $({\newnc{\M_{d - \lceil \deg g / 2 \rceil}}(g L)})_{u,v} = L (u^\star g v)$.
\end{enumerate}
The functional $L$ is called {\em unital} if $L(1) = 1$ and is called {\em symmetric} if $L(f^\star) = L(f)$, for all $f$ belonging to the domain of $L$.
We also recall the following useful facts together with their proofs for the sake of completeness.
\begin{lemma}[\protect{\cite[Lemma~1.44]{burgdorf16}}]
\label{lemma:Hankel}
Let $g \in \SymRX$ and let $L : \RX_{2 d} \to \R$ be a symmetric linear functional. Then, one has:
\begin{enumerate}[\rm (1)]
\item $L(h^\star h) \geq 0$ for all $h \in  \RX_d$, if and only if, ${\newnc{\M_d}(L)} \succeq 0$;
\item $L(h^\star g h) \geq 0$ for all $h \in \RX_{d - \lceil \deg g / 2 \rceil }$, if and only if, ${\newnc{\M_{d - \lceil \deg g / 2 \rceil}}(g L)} \succeq 0$.
\end{enumerate} 
\end{lemma}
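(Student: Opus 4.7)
The plan is to reduce both statements to the standard equivalence ``a symmetric matrix is positive semidefinite iff its associated quadratic form is nonnegative on all vectors,'' by expanding test polynomials in the word basis and matching the resulting quadratic form with the entries of the Hankel, resp.\ localizing, matrix.

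For part (1), I would write an arbitrary $h \in \RX_d$ in the basis $\W_d$: if $h = \sum_{u \in \langle \uX \rangle_d} h_u\, u$ with $h_u \in \R$, and $\vec{h} := (h_u)_u$ denotes the associated coefficient vector, then by linearity of $L$ together with the definition of $\star$,
\[
L(h^\star h) \;=\; \sum_{u,v \in \langle \uX \rangle_d} h_u h_v\, L(u^\star v) \;=\; \vec{h}^{\,\top}\, \M_d(L)\, \vec{h}.
\]
Symmetry of $\M_d(L)$ follows from the symmetry of $L$ via $L(u^\star v) = L\bigl((v^\star u)^\star\bigr) = L(v^\star u)$. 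Hence $L(h^\star h)\geq 0$ for all $h \in \RX_d$ is the same as $\vec{h}^{\,\top}\, \M_d(L)\, \vec{h} \geq 0$ for every coefficient vector $\vec{h}$, which is equivalent to $\M_d(L) \succeq 0$.

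For part (2), I would apply the same recipe to $h \in \RX_{d - \lceil \deg g / 2 \rceil}$. The choice of $\lceil \deg g/2 \rceil$ guarantees $\deg(u^\star g v) \leq 2d$, so $L$ is defined on these products. Expanding $h$ in the basis of words of degree at most $d - \lceil \deg g/2 \rceil$ gives
\[
L(h^\star g h) \;=\; \sum_{u,v} h_u h_v\, L(u^\star g v) \;=\; \vec{h}^{\,\top}\, \M_{d - \lceil \deg g/2 \rceil}(gL)\, \vec{h}.
\]
Symmetry of the localizing matrix combines $g = g^\star$ with the symmetry of $L$: $L(v^\star g u) = L\bigl((v^\star g u)^\star\bigr) = L(u^\star g^\star v) = L(u^\star g v)$. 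The same PSD equivalence then closes the argument.

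There is no real obstacle here; the proof is pure bookkeeping. The only points that require care are the degree accounting in (2) and checking that both matrices are genuinely symmetric --- without this, $\vec{h}^{\,\top} A \vec{h} \geq 0$ for all $\vec{h}$ would only force $A + A^\top \succeq 0$, which would not give (1) and (2) as stated.
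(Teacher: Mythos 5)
Your proof is correct and follows essentially the same approach as the paper's: expand $h$ in the word basis, identify $L(h^\star h)$ (resp.\ $L(h^\star g h)$) with the quadratic form $\h^T \M_d(L)\h$ (resp.\ $\h^T \M_{d-\lceil\deg g/2\rceil}(gL)\h$), and invoke the standard PSD criterion. The explicit verification that the Hankel and localizing matrices are symmetric is a welcome extra detail the paper leaves implicit.
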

\begin{proof}
For $h = \sum_w h_w w \in \RX_d$, let us denote by $\h \in \R^{\bsigma(n,d)}$ the vector consisting of all coefficients $h_w$ of $h$. 
The first statement now follows from
\[
L(h^\star h) = \sum_{u,v} h_u h_v L(u^\star v) = \sum_{u,v} h_u h_v ({\newnc{\M_d}(L)})_{u,v} = \h^T {\newnc{\M_d}(L)} \h \,.
\]
The second statement follows after checking that $L(h^\star g h) = \h^T {\newnc{\M_{d - \lceil \deg g / 2 \rceil}}(g L)}  \h$.
\end{proof}
\begin{definition}
\label{def:flatextension}
Suppose $L : \RX_{2d + 2 \delta} \to \R$ is a linear functional with restriction $\tilde{L} : \RX_{2 d} \to \R$. 
We associate to $L$ and $\tilde{L}$ the Hankel matrices ${\newnc{\M_{d+ \delta}}(L)}$ and ${\newnc{\M_d}(\tilde{L})}$ respectively, and get the block form
\[
{\newnc{\M_{d + \delta}}(L)} = \begin{bmatrix}
{\newnc{\M_d}(\tilde{L})} & B \\[1mm]
B^T & C
\end{bmatrix} \,.
\]
We say that $L$ is \emph{$\delta$-flat} or that $L$ is a \emph{flat extension} of $\tilde{L}$, if ${\newnc{\M_{d+\delta}}(L)}$ is flat over ${\newnc{\M_d}(\tilde{L})}$, i.e., if $\rank {\newnc{\M_{d+\delta}}(L)} = \rank {\newnc{\M_d}(\tilde{L})} $.
\end{definition}
For a subset $I \subseteq \{1,\dots,p\}$, let us define 
${\newnc{\M_d}(L,I)}$ to be the Hankel submatrix obtained from ${\newnc{\M_d}(L)}$ after retaining only those rows and columns indexed by $w \in \langle \underline{X}(I) \rangle_d$. 
When $I \subseteq I_k$ and $g \in \RXI{k}$, for $k \in \{1,\dots,p\}$ , we define the localizing submatrix ${\newnc{\M_{d - \lceil \deg g / 2 \rceil}}(g L,I)}$ in a similar fashion.
In particular, ${\newnc{\M_d}(L,I_k)}$ and ${\newnc{\M_{d - \lceil \deg g / 2 \rceil}}(g L,I_k)}$ can be seen as Hankel and localizing matrices with rows and columns indexed by a basis of $\RXI{k}_d$ \newnc{and $\RXI{k}_{d - \lceil \deg g / 2 \rceil}$, respectively}.

\if{
The following result is a sparse variant of Lemma~\ref{lemma:Hankel}.
\begin{lemma}
\label{lemma:sparse_Hankel}
Let $g \in \SymRXI{k}$ and $L : \RX_{2 d} \to \R$ be a symmetric linear functional. Then, one has:
\begin{itemize}
\item $L(h^\star h) \geq 0$ for all $h \in \RXI{k}_d$ if and only if ${\newnc{\M}(L,I_k)} \succeq 0$;
\item $L(h^\star g h) \geq 0$ for all $h \in  \RXI{k}_{d - \lceil \deg g / 2 \rceil }$ if and only if ${\newnc{\M}(g L,I_k)} \succeq 0$.
\end{itemize} 
\end{lemma}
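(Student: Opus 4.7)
The plan is to mirror the proof of Lemma~\ref{lemma:Hankel} almost verbatim, the only change being that every coefficient vector is constrained to $\R^{\bsigma(n_k,d)}$ instead of $\R^{\bsigma(n,d)}$. The key observation, which is immediate from the definitions in Section~\ref{sec:prelim_sparse}, is that $\M_d(L,I_k)$ is a principal submatrix of $\M_d(L)$ (respectively, $\M_{d-\lceil \deg g/2\rceil}(gL,I_k)$ is a principal submatrix of $\M_{d-\lceil \deg g/2\rceil}(gL)$) obtained by keeping exactly those rows and columns indexed by $\langle \underline{X}(I_k)\rangle_d$. Consequently, the quadratic form attached to the submatrix is precisely the restriction of the ambient quadratic form to coefficient vectors supported on $\langle \underline{X}(I_k)\rangle_d$.

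Given this, I would carry out two steps. First, for statement (1), I take $h \in \RXI{k}_d$, let $\h \in \R^{\bsigma(n_k,d)}$ be the vector of its coefficients indexed by $\langle \underline{X}(I_k)\rangle_d$, and expand $L(h^\star h)$ using linearity to read off $\h^T \M_d(L,I_k)\,\h$, exactly as in the proof of Lemma~\ref{lemma:Hankel} but now with all sums ranging only over $\langle \underline{X}(I_k)\rangle_d$. Since the map $h \mapsto \h$ is a bijection onto $\R^{\bsigma(n_k,d)}$, nonnegativity of $L(h^\star h)$ on $\RXI{k}_d$ is equivalent to positive semidefiniteness of $\M_d(L,I_k)$. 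For statement (2), I would repeat the same argument for $L(h^\star g h)$ with $h\in \RXI{k}_{d-\lceil\deg g/2\rceil}$; the hypothesis $g\in \SymRXI{k}$ guarantees $h^\star g h \in \RXI{k}_{2d}\subseteq \RX_{2d}$ so that $L(h^\star g h)$ is well-defined, and the expansion gives $\h^T \M_{d-\lceil \deg g/2\rceil}(gL,I_k)\,\h$.

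There is no genuine obstacle here: the lemma is a direct specialization of Lemma~\ref{lemma:Hankel} to the sparse coordinate block $\underline{X}(I_k)$. The only point worth stating explicitly in the write-up is the bijection between $\RXI{k}_d$ and $\R^{\bsigma(n_k,d)}$, which is what licenses the passage from ``$L(h^\star h)\ge 0$ for every $h$ in a subspace of nc polynomials'' to ``the corresponding submatrix is positive semidefinite,'' and which is also why no sparsity hypothesis beyond $g \in \SymRXI{k}$ (nor any amalgamation machinery) is needed at this purely algebraic level.
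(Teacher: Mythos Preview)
Your proposal is correct and follows essentially the same approach as the paper's own proof: write $h=\sum_w h_w w\in\RXI{k}_d$, collect the coefficients into $\h\in\R^{\bsigma(n_k,d)}$, and expand $L(h^\star h)=\h^T\M_d(L,I_k)\h$ (respectively $L(h^\star g h)=\h^T\M_{d-\lceil\deg g/2\rceil}(gL,I_k)\h$), then use the bijection $h\leftrightarrow\h$ to conclude. Your additional remarks about the submatrix interpretation and the role of the hypothesis $g\in\SymRXI{k}$ are correct elaborations but not needed for the argument itself.
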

\begin{proof}
For $h = \sum_w h_w w \in \RXI{k}_d$, let us denote by $\h \in \R^{\bsigma(n_k,d)}$ the vector consisting of all coefficients $h_w$ of $h$. 
The first statement now follows from
\[
L(h^\star h) = \sum_{u,v} h_u h_v L(u^\star v) = \sum_{u,v} h_u h_v {\newnc{\M}(L,I_k)}_{u,v} = \h^T {\newnc{\M}(L,I_k)} \h \,.
\]
Similarly, the second statement follows after verifying that $L(h^\star g h) = \h^T {\newnc{\M}(g L,I_k)}  \h$.
\end{proof}
}\fi
\section{Sparse Representations of Noncommutative Positive Polynomials}
\label{sec:sparsePutinar}
In this section, we prove our main theoretical result, which is a sparse version of 
the Helton-McCullough archimedean Positivstellensatz (Theorem~\ref{th:densePsatz}).
For this, we rely on 
amalgamation theory for $C^\star$-algebras, see e.g.~\cite{Amalgam78,Voi83}.

Given a Hilbert space $\mathcal{H}$, we denote by $\mathcal{B}(\mathcal{H})$ the set of bounded operators on $\mathcal{H}$.
A $C^\star$-algebra is a complex Banach algebra $\mathcal{A}$ with an involution satisfying $\|xx^\star\|=\|x\|^2$
for all $x\in\mathcal A$. Equivalently, it is a 
norm
closed subalgebra with involution of $\mathcal B(\mathcal H)$ for some Hilbert space $\mathcal H$.
{
Given a $C^\star$-algebra $\cA$, \newnc{a {\em state $\varphi$} is defined to be a positive linear functional of unit norm on $\cA$}, and we write often $(\cA,\varphi)$ when $\cA$ comes together with the state $\varphi$.
Given two $C^\star$-algebras $(\cA_1,\varphi_1)$ and $(\cA_2,\varphi_2)$, a homomorphism $\iota : \cA_1 \to \cA_2$ is called  {\em state-preserving}  if $\varphi_2 \circ \iota  = \varphi_1$.
Given a $C^\star$-algebra $\cA$,  a {\em unitary representation} of $\cA$ in $\cH$ is a $*$-homomorphism $\pi : \cA \to \mathcal{B} (\cH)$ which is {\em strongly continuous}, i.e., the mapping $\cA \to \cH$, $g \mapsto \pi(g) \xi$ is continuous for every $\xi \in \cH$.
}
\begin{theorem}[\protect{\cite{Amalgam78}} or \protect{\cite[Section 5]{Voi83}}]
\label{th:amalgamation}
Let 
$(\mathcal{A},\varphi_0)$ and $\{(\mathcal B_k,\varphi_k) : k \in I\}$ be $C^\star$-algebras with states, and let $\iota_k$ be a state-preserving embedding of $\mathcal{A}$ into $\mathcal B_k$, for each $k\in I$. 
Then there exists a $C^\star$-algebra ${\mathcal D}$ amalgamating the $(\mathcal B_k,\varphi_k)$
over $(\mathcal{A},\varphi_0)$. That is,
there is a state $\varphi$ on ${\mathcal D}$, and state-preserving homomorphisms 
$j_k : \mathcal B_k \to {\mathcal D}$, such that {$j_k \circ \iota_k  = j_i \circ \iota_i$}, for all $k,i \in I$, and such that $\bigcup_{k \in I} j_k (\mathcal B_k)$ generates $D$.
\end{theorem}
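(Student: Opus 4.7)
The plan is to build $\mathcal D$ explicitly as the reduced amalgamated free product, essentially following Voiculescu's construction. The key idea is to represent all the $\mathcal B_k$ on a single Hilbert space in which the image of $\mathcal A$ acts in a coherent way, and then take the $C^\star$-algebra generated by these representations.

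First I would apply the GNS construction to each $(\mathcal B_k,\varphi_k)$ to obtain a Hilbert space $\mathcal H_k$, a cyclic unit vector $\xi_k \in \mathcal H_k$, and a $*$-representation $\pi_k : \mathcal B_k \to \mathcal B(\mathcal H_k)$ satisfying $\varphi_k(b)=\langle \pi_k(b)\xi_k\mid \xi_k\rangle$. Do the same for $(\mathcal A,\varphi_0)$, yielding $(\mathcal H_0,\pi_0,\xi_0)$. Because each $\iota_k$ is state-preserving, the map $\pi_0(a)\xi_0 \mapsto \pi_k(\iota_k(a))\xi_k$ extends to an isometry $\mathcal H_0 \hookrightarrow \mathcal H_k$ identifying $\xi_0$ with $\xi_k$; I will treat $\mathcal H_0$ as a common closed subspace of every $\mathcal H_k$.

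Next I would form the amalgamated free product Hilbert space. Writing $\mathcal H_k = \mathcal H_0 \oplus \mathcal H_k^\circ$, set
\[
\mathcal H \;=\; \mathcal H_0 \;\oplus\; \bigoplus_{n\geq 1} \;\bigoplus_{\substack{k_1,\dots,k_n\in I\\ k_j\neq k_{j+1}}} \mathcal H_{k_1}^\circ \otimes \cdots \otimes \mathcal H_{k_n}^\circ,
\]
with distinguished unit vector $\xi$ corresponding to $\xi_0$. For each fixed $k\in I$ there is a canonical unitary $U_k : \mathcal H \to \mathcal H_k \otimes_{\mathcal H_0} \mathcal H(k)$, where $\mathcal H(k)$ is the analogous free product obtained by omitting the $k$-th leg; this uses the decomposition of a reduced tensor according to whether its first index equals $k$. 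Using $U_k$, define the representation $\lambda_k : \mathcal B_k \to \mathcal B(\mathcal H)$ by $\lambda_k(b) = U_k^*(\pi_k(b) \otimes 1)U_k$. Let $\mathcal D$ be the $C^\star$-subalgebra of $\mathcal B(\mathcal H)$ generated by $\bigcup_{k\in I}\lambda_k(\mathcal B_k)$, and define $\varphi$ on $\mathcal D$ by $\varphi(x) = \langle x\xi \mid \xi\rangle$; as a vector state $\varphi$ is automatically a state.

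It remains to verify three properties, each of which is a direct computation on the generating tensors. State-preservation $\varphi\circ\lambda_k = \varphi_k$ follows because for $b \in \mathcal B_k$, one has $\lambda_k(b)\xi = \pi_k(b)\xi_k \in \mathcal H_k \subset \mathcal H$, so $\langle \lambda_k(b)\xi\mid \xi\rangle = \langle \pi_k(b)\xi_k \mid \xi_k\rangle$. For the amalgamation identity $\lambda_k\circ\iota_k = \lambda_i\circ\iota_i$, observe that for $a\in\mathcal A$ the operator $\pi_k(\iota_k(a))$ preserves the decomposition $\mathcal H_k = \mathcal H_0\oplus \mathcal H_k^\circ$ in a way that only depends on the action of $\pi_0(a)$ on $\mathcal H_0$; consequently $\lambda_k(\iota_k(a))$ acts on $\mathcal H$ as the common operator induced by $\pi_0(a)$ on the $\mathcal H_0$-factor of every elementary tensor, independent of $k$. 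Finally, that $\bigcup_k j_k(\mathcal B_k)$ generates $\mathcal D$ is built in to the definition, and setting $j_k := \lambda_k$ completes the construction. The main obstacle in this plan is the careful bookkeeping needed to show that the unitaries $U_k$ are well defined and that the resulting maps $\lambda_k$ are honest $*$-homomorphisms with the claimed properties; this is where one genuinely uses the amalgamation of $\mathcal H_0$ inside each $\mathcal H_k$ via the state-preserving condition.
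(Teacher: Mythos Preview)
The paper does not prove this theorem; it is quoted from Blackadar and Voiculescu and used as a black box. So there is no ``paper's own proof'' to compare against, and your outline is an attempt to reproduce the cited construction.

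Your plan follows Voiculescu's reduced free product, which is the right idea, but there is a genuine gap in how you handle amalgamation over $\mathcal A$ as opposed to over $\mathbb C$. The free product Hilbert space you write down,
\[
\mathcal H \;=\; \mathcal H_0 \;\oplus\; \bigoplus_{n\geq 1}\bigoplus_{k_1\neq k_2\neq\cdots}\mathcal H_{k_1}^\circ\otimes\cdots\otimes\mathcal H_{k_n}^\circ
\]
with ordinary Hilbert-space tensor products and $\mathcal H_k^\circ=\mathcal H_k\ominus\mathcal H_0$, is Voiculescu's construction for the case $\mathcal A=\mathbb C$ (so $\mathcal H_0=\mathbb C\xi$). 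When $\dim\mathcal H_0>1$ the unitaries $U_k$ you invoke are not well defined: the notation $\mathcal H_k\otimes_{\mathcal H_0}\mathcal H(k)$ has no meaning for ordinary Hilbert spaces, and the bookkeeping that sends $\xi_k\otimes\eta\mapsto\eta$ breaks because $\pi_k(\iota_k(a))\xi_k$ need not be a scalar multiple of $\xi_k$. Concretely, your key claim that $\lambda_k(\iota_k(a))$ is independent of $k$ fails already on first-level vectors: for $\eta\in\mathcal H_k^\circ$ one has $\lambda_k(\iota_k(a))\eta=\pi_k(\iota_k(a))\big|_{\mathcal H_k^\circ}\eta$, and this restriction genuinely depends on $k$, not just on $\pi_0(a)$.

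To repair the argument in Voiculescu's framework you need $\varphi_0$-preserving conditional expectations $E_k:\mathcal B_k\to\mathcal A$, then work with the Hilbert $\mathcal A$-modules $L^2(\mathcal B_k,E_k)$ and their interior (relative) tensor products over $\mathcal A$; only then do the $U_k$ and the amalgamation identity make sense. Since the theorem hypothesizes only states, not conditional expectations, you would either have to produce such expectations or switch to the full (universal) amalgamated free product $\ast_{\mathcal A}\mathcal B_k$ and prove that the compatible family $(\varphi_k)$ extends to a state on it --- which is closer to Blackadar's route.
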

{
Theorem \ref{th:amalgamation} is illustrated in Figure \ref{diag:amalgamation} in the case $I=\{1,2\}$.}
\begin{figure}[h]
\begin{center}
\begin{tikzcd}[row sep=3ex,column sep=1ex]
&  ({\mathcal D},\varphi) \\
{} \\
 \big(\mathcal{B}_1,\varphi_1\big) \arrow[uur,"j_1",dashed] &&  \big(\mathcal{B}_2,\varphi_2\big) \arrow[uul,"j_2"',dashed]  \\
{} \\
&& \\
&  \big(\mathcal{A},\varphi_0\big) \arrow[uuur, "\iota_2"] \arrow[uuul, "\iota_1"'] 
\end{tikzcd}
\caption{Illustration of Theorem~\ref{th:amalgamation} in the case $I=\{1,2\}$.}
\label{diag:amalgamation}
\end{center}
\end{figure}
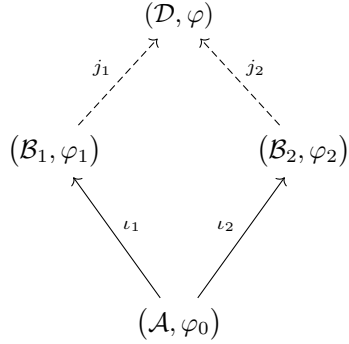


We also recall the construction by Gelfand-Naimark-Segal (GNS) establishing a correspondence between $\star$-representations of a  $C^\star$-algebra and positive linear functionals on it. 
In our context, the next result~\cite[Theorem~1.27]{burgdorf16} restricts to linear functionals on $\RX$ which are positive on an archimedean quadratic module. 
\begin{theorem}
\label{th:gns}
Let $S \subseteq \SymRX$ be given such that its quadratic module ${\cM(S)}$ is archimedean. 
Let $L : \RX \to \R$ be a nontrivial linear functional with $L({\cM(S)}) \subseteq \R^{\geq 0}$.
Then there exists a tuple $\underline{A} = (A_1,\dots,A_n) \in {D_S^\infty}$ and a vector $\v$ such that $L(f) = \langle f(\underline{A}) \v , \v \rangle$, for all $f \in \RX$.
\end{theorem}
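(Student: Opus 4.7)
The plan is to carry out the standard GNS construction adapted to the nc setting, using archimedeanness of $\cM(S)$ to ensure boundedness of the operators that represent the variables $X_i$.

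First, I would put a symmetric bilinear form on $\RX$ by setting $\langle p, q \rangle_L := L(q^\star p)$. Since every hermitian square $q^\star q$ lies in $\cM(S)$ and $L$ is nonnegative on $\cM(S)$, this form is positive semidefinite. Let $N := \{p \in \RX : L(p^\star p) = 0\}$. A Cauchy--Schwarz argument applied to $\langle\cdot,\cdot\rangle_L$ shows that $N$ is a left ideal of $\RX$, so the quotient $\RX/N$ carries a well-defined inner product. I would then let $\cH$ be the Hilbert space completion of $\RX/N$ and write $[p]$ for the image of $p \in \RX$ in $\cH$.

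Next, I would define symmetric operators $A_i$ on the dense subspace $\RX/N \subseteq \cH$ by $A_i [p] := [X_i p]$; they are well-defined because $N$ is a left ideal and each $X_i$ is symmetric. The key point is to show that each $A_i$ extends to a bounded self-adjoint operator on $\cH$; this is the main obstacle and is exactly where the archimedean hypothesis is needed. Concretely, archimedeanness gives $N \cdot \mathbf{1} - X_i^2 \in \cM(S)$ for some $N > 0$, hence $L(p^\star(N\mathbf{1} - X_i^2)p) \ge 0$ for all $p \in \RX$, which rearranges to $\|A_i [p]\|^2 = L(p^\star X_i^2 p) \le N \, L(p^\star p) = N\|[p]\|^2$. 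Thus $A_i$ extends to a bounded self-adjoint operator on $\cH$, and the tuple $\underline{A} = (A_1,\dots,A_n)$ consists of bounded self-adjoint operators.

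To show $\underline{A} \in D_S^\infty$, fix $s_j \in S$ and $[p] \in \RX/N$. Since $p^\star s_j p \in \cM(S)$, we have $\langle s_j(\underline{A})[p], [p]\rangle = L(p^\star s_j p) \ge 0$. By density of $\RX/N$ in $\cH$ and continuity of $s_j(\underline{A})$, this inequality persists for all vectors in $\cH$, so $s_j(\underline{A}) \succeq 0$ and $\underline{A} \in D_S^\infty$. Finally, set $\v := [\mathbf{1}] \in \cH$. For any $f \in \RX$, using that the representation $X_i \mapsto A_i$ extends to a $\star$-homomorphism from $\RX$ into $\mathcal{B}(\cH)$, we compute
\[
\langle f(\underline{A}) \v, \v \rangle = \langle [f\cdot \mathbf{1}], [\mathbf{1}] \rangle = L(\mathbf{1}^\star f) = L(f),
\]
which is the required representation. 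Nontriviality of $L$ ensures $\v \ne 0$, completing the construction.
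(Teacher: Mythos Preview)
Your proof is correct and follows the standard GNS construction, which is exactly what the paper invokes (the result is cited from \cite[Theorem~1.27]{burgdorf16} rather than proved in the paper, though the same construction is carried out in detail within the proof of Theorem~\ref{th:sparsePsatz}). One small cosmetic issue: you use the symbol $N$ both for the null space $\{p:L(p^\star p)=0\}$ and for the archimedean bound constant; renaming one of them would avoid confusion. It is also worth noting that your claim ``nontriviality of $L$ ensures $\v\ne0$'' relies implicitly on archimedeanness: if $L(\mathbf{1})=0$ then the bound $L(p^\star p)\le N\,L(\mathbf{1})$ forces $L(p^\star p)=0$ for all $p$, whence $L\equiv0$ by Cauchy--Schwarz.
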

For $k=1,\dots,p$, let us define
\begin{align*}
{\cM(S)^{k}} := \left\lbrace \sum_{i=1}^K a_i^\star s_i a_i : K \in \N \,, a_{i} \in \RXI{k} ,\, s_i \in {(S \cap \SymRXI{k}) \cup \{1\} } \right\rbrace \,,
\end{align*}
and
\begin{align}
\label{eq:sparseMS}
{\cM(S)^{\sparse} := \cM(S)^{1} + \dots + \cM(S)^{p}} \,.
\end{align}
\newnc{Next, we state the main foundational result of this paper.}
\begin{theorem}
\label{th:sparsePsatz}
Let $S \cup \{f\} \subseteq \SymRX$  and let ${D_S}$ be as in~\eqref{eq:newDS} with the additional quadratic constraints~\eqref{eq:additional}. 
Suppose Assumption~\ref{hyp:sparsityRIP} holds. 
If $f(\underline{A}) \succ 0$ for all $\underline{A} \in {D_S^\infty}$, then $f \in {\cM(S)^{\sparse}}$.
\end{theorem}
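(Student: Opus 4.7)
The plan is to proceed by contradiction, combining a Hahn--Banach separation argument with the dense GNS theorem (Theorem~\ref{th:gns}) applied clique-by-clique and $C^\star$-algebra amalgamation (Theorem~\ref{th:amalgamation}) guided by the running intersection property. Suppose $f \notin \cM(S)^{\sparse}$. Thanks to the additional quadratic constraints~\eqref{eq:additional}, each $\cM(S)^k$ is archimedean in $\RXI{k}$, from which one deduces that $\cM(S)^{\sparse}$ is a closed convex cone in $\SymRX$ (the statement reduces to each finite-degree slice, on which Carleman-type bounds apply). Eidelheit--Kakutani separation then yields a linear functional $L : \RX \to \R$ with $L(\cM(S)^{\sparse}) \subseteq \R^{\geq 0}$ and $L(f) \leq 0$; rescale so that $L(1) = 1$.

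For each $k = 1, \ldots, p$, the restriction $L_k := L|_{\RXI{k}}$ is nonnegative on the archimedean module $\cM(S)^k$. Theorem~\ref{th:gns} produces a tuple $\underline{A}^{(k)} = (A_i^{(k)})_{i \in I_k}$ of bounded self-adjoint operators on a Hilbert space $\mathcal{H}_k$, together with a unit cyclic vector $\v_k$, such that $L_k(g) = \langle g(\underline{A}^{(k)}) \v_k, \v_k \rangle$ for all $g \in \RXI{k}$, and such that $s_j(\underline{A}^{(k)}) \succeq 0$ for every $j \in J_k$. Let $\mathcal{B}_k$ denote the unital $C^\star$-algebra generated by $\underline{A}^{(k)}$ inside $\mathcal{B}(\mathcal{H}_k)$, equipped with the vector state $\varphi_k(b) := \langle b\v_k, \v_k \rangle$.

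The data $(\mathcal{B}_k, \varphi_k)$ is amalgamated inductively along RIP. At stage $k$, choose $\ell = \ell(k+1) \leq k$ so that $I_{k+1} \cap \bigcup_{j \leq k} I_j \subseteq I_\ell$. Let $\cA_k \subseteq \mathcal{B}_\ell$ be the unital $C^\star$-subalgebra generated by $\{A_i^{(\ell)} : i \in I_{k+1} \cap I_\ell\}$, and let $\cA_k' \subseteq \mathcal{B}_{k+1}$ be the analogous subalgebra generated by $\{A_i^{(k+1)} : i \in I_{k+1} \cap I_\ell\}$. Both restricted states $\varphi_\ell|_{\cA_k}$ and $\varphi_{k+1}|_{\cA_k'}$ agree on polynomial generators with $L$ restricted to $\R\langle \underline{X}(I_{k+1} \cap I_\ell)\rangle$, so the canonical identification $A_i^{(\ell)} \mapsto A_i^{(k+1)}$ extends to a state-preserving $*$-isomorphism $\cA_k \cong \cA_k'$. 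Applying Theorem~\ref{th:amalgamation} to the previously amalgamated algebra and to $\mathcal{B}_{k+1}$ over this common base continues the induction; after $p-1$ steps one obtains a $C^\star$-algebra $(\mathcal{D}, \varphi)$ with state-preserving embeddings $j_k : \mathcal{B}_k \hookrightarrow \mathcal{D}$ satisfying $j_j(A_i^{(j)}) = j_{j'}(A_i^{(j')})$ whenever $i \in I_j \cap I_{j'}$. Passing to the GNS representation of $(\mathcal{D}, \varphi)$ on some Hilbert space $\cH$ with implementing vector $\v$, set $A_i := j_k(A_i^{(k)})$ for any $k$ with $i \in I_k$; then $\underline{A} = (A_1, \ldots, A_n) \in D_S^\infty$ and
\[
L(f) = \sum_{k=1}^p L_k(f_k) = \sum_{k=1}^p \varphi(f_k(\underline{A})) = \varphi(f(\underline{A})) > 0,
\]
where the strict inequality uses $f(\underline{A}) \succ 0$ together with $\v \neq 0$; this contradicts $L(f) \leq 0$.

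The main obstacle is the amalgamation step: one must confine, at each stage, the overlap between $I_{k+1}$ and the previously glued cliques to a \emph{single} block $I_\ell$ so that Theorem~\ref{th:amalgamation} can be invoked with a clean base algebra. This is exactly what RIP provides. A secondary technical point is verifying that $\varphi_\ell|_{\cA_k}$ and $\varphi_{k+1}|_{\cA_k'}$ coincide---which reduces to observing that both restrictions are determined by $L$ on the overlap variables---so that the amalgamation is genuinely state-preserving. Finally, the closedness of $\cM(S)^{\sparse}$ used in the separation step also deserves care, but follows from archimedeanness of each summand on each finite-degree slice.
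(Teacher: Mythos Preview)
Your overall architecture matches the paper's: contradiction via Hahn--Banach separation, clique-wise GNS, $C^\star$-algebra amalgamation along RIP, then a final GNS on the amalgam to produce the contradicting $\underline{A}\in D_S^\infty$. Two points deserve correction.

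First, the separation step: you invoke closedness of $\cM(S)^{\sparse}$, but this is both unnecessary and not obviously true (your ``Carleman-type bounds'' sketch is too vague). The paper instead uses that $\mathbf{1}$ lies in the algebraic interior of $\cM(S)^{\sparse}$ by archimedeanity of each $\cM(S)^k$; Eidelheit--Kakutani then applies directly and also yields $L(\mathbf{1})>0$, which you need before you can rescale.

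Second, and more seriously: your amalgamation base is not well-defined. You take $\cA_k$ to be the $C^\star$-subalgebra of $\mathcal{B}_\ell$ generated by $\{A_i^{(\ell)}:i\in I_{k+1}\cap I_\ell\}$ and assert that $A_i^{(\ell)}\mapsto A_i^{(k+1)}$ extends to a $*$-isomorphism $\cA_k\cong\cA_k'$. This does not follow from agreement of the restricted states on polynomials. The $C^\star$-algebra generated by the overlap operators inside $\mathcal{B}(\cH_\ell)$ depends on the full representation on $\cH_\ell$: concretely,
\[
\|p(A_i^{(\ell)})\|^2=\sup\big\{\,L(r^\star p^\star p\, r):r\in\RXI{\ell},\ L(r^\star r)\le 1\,\big\},
\]
where $r$ ranges over polynomials in \emph{all} variables of $I_\ell$, while the analogous norm in $\cA_k'$ has $r\in\RXI{k+1}$. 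These suprema need not coincide, so the map on generators need not be isometric; indeed a relation $p(A_i^{(\ell)})=0$ does not force $p(A_i^{(k+1)})=0$, so even a $*$-homomorphism is not guaranteed. What you do correctly observe---that both restricted states equal $L$ on $\R\langle\underline{X}(I_{k+1}\cap I_\ell)\rangle$---tells you only that the GNS representations of $(\cA_k,\varphi_\ell|_{\cA_k})$ and $(\cA_k',\varphi_{k+1}|_{\cA_k'})$ are unitarily equivalent, i.e., that $\cA_k$ and $\cA_k'$ share a common \emph{quotient}. Theorem~\ref{th:amalgamation} requires a common \emph{sub}algebra with state-preserving embeddings. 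Your closing paragraph treats this as a matter of checking that the states agree; the real obstacle is one level below, at the existence of the isomorphism itself.

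The paper's remedy is to perform a separate GNS construction directly on $L$ restricted to $\R\langle\underline{X}(I_{k+1}\cap I_\ell)\rangle$, obtaining a Hilbert space $\cH(I_{k+1}\cap I_\ell)$ together with overlap operators $\hat A^{12}$; this space embeds isometrically into both $\cH_\ell$ and $\cH_{k+1}$, and the amalgamation is carried out over (operators on) this overlap GNS space rather than over subalgebras cut out of the individual clique representations. That extra GNS step is the missing ingredient in your argument.
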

\begin{proof}
The proof is by contradiction: suppose that $f(\underline{A}) \succ 0$ for all $\underline{A} \in {D_S^\infty}$, and that $f \notin {\cM(S)^{\sparse}}$. 
By the Hahn-Banach separation theorem, also known as the Eidelheit-Kakutani Theorem in this context (see~\cite[Corollary III.1.7]{Bar02}  {or \cite[\S0.2.4]{jameson1970ordered}}), there exists a linear functional $L : \RX \to \R$ with $L(f) \leq 0$ and $L ({\cM(S)^{\sparse}}) \subseteq \R^{\geq 0}$. 
{Since $\bfone$ belongs to the algebraic interior of ${\cM(S)^{\sparse}}$ by archimedeanity and $L$ is nonzero, one has $L(\bfone) > 0$.}

Here, we cannot directly apply Theorem~\ref{th:gns} since ${\cM(S)^{\sparse}}$ is not the quadratic module of $\RX$ generated by the polynomials involved in $S$.
Nevertheless, we will prove that there exists a tuple $\underline{A} = (A_1,\dots, A_n) \in {D_S^\infty}$ and a nonzero vector $\w$ such that $L(f) = \langle f(\underline{A}) \w, \w \rangle$.
Since $f \succ 0$ implies that $\langle f(\underline{A}) \w, \w \rangle > 0$, this will contradict the fact that $L(f) \leq 0$.

For $k=1,\dots,p$, let us denote by $L^k : \RXI{k} \to \R$ the restriction of $L$ to $\RXI{k}$.
Observe that $L^k ({\cM(S)^k}) \subseteq \R^{\geq 0}$.
Each linear functional $L^k$ induces a sesquilinear form 
\[
(g,h) \mapsto \langle g, h \rangle_k := L^k(g^\star h)
\]
on $\RXI{k}$, which is positive semidefinite since $L^k$ is positive on sums of hermitian squares, allowing us to apply the Cauchy-Schwarz inequality.
Let $\mathcal{N}^k := \{ h \in \RXI{k} : \langle h,h\rangle_k = 0 \}$ be the nullvectors corresponding to $L^k$. 
By using again the Cauchy-Schwarz inequality, one can show that $\mathcal{N}^k$ is a vector subspace of $\RXI{k}$, and the sesquilinear form $L^k$ induces an inner product on the quotient space $\RXI{k} / \mathcal{N}^k$. 
Let us denote by $\mathcal{H}(I_k)$ the Hilbert space completion of $\RXI{k} / \mathcal{N}^k$ {and denote by $\langle \cdot, \cdot \rangle_k$ its inner product}.
{Since $L(\bfone) > 0$, one has $\bfone \notin \mathcal{N}^k$,} and $\mathcal{H}(I_k)$ is nontrivial and separable.
By using the fact that $L^k$ is nonnegative on the archimedean quadratic module ${\cM(S)^k}$, there exists $N \in \N$ such that $L^k(g^\star (N - X_i^2) g) \geq 0$, for all $g \in \RXI k$ {and $i \in I_k$}. 
Therefore, one has
\begin{align}
\label{eq:leftideal}
0 \leq \langle X_i g, X_i g \rangle_k = L^k(g^\star X_i^2 g) \leq N L^k(g^\star g) \,,
\end{align}
implying that $\mathcal{N}^k$ is a left ideal. 
Therefore, the left multiplication operator $\hat{X}_i^k : g \mapsto {X_i} g$ is well-defined on $\RXI{k} / \mathcal{N}^k$, for all $i \in I_k$. 
By~\eqref{eq:leftideal}, this operator is also bounded and can be extended uniquely to 
a bounded operator on
$\mathcal{H}(I_k)$. 
We fix an orthonormal basis of $\mathcal{H}(I_k)$ and denote by $\hat{A}_i^k$ the corresponding representative of the left multiplication by $X_i$ in $\mathcal{B}(\mathcal{H}(I_k))$ with respect to this basis. 
Let us denote $\hat{A}^k := (\hat{A}_i^k)_{i \in I_k}$. 
Then, one has for all $g \in \RXI{k}$
\begin{align} 
\label{eq:proofLk}
L^k(g) = \langle g(\hat{A}^k) \v^k, \v^k  \rangle_k \,,
\end{align} 
where $\v^k \in {\mathcal{H}(I_k)}$ is the image of the identity polynomial. We denote by $\varphi_k$ the state induced by $\v^k$ on $\mathcal{B}(\mathcal{H}(I_k))$, that is, {$\varphi_k( B)=\langle B\v^k,\v^k\rangle$ for $B\in\mathcal{B}(\mathcal{H}(I_k))$. In particular, $\varphi_k(g(\hat{A}^k) ) = L^k(g)$, for all $g \in \RXI{k}$.}
We refer to~Figure \ref{diag:amalgam} for an illustration for the case $p=2$. 
\newnc{Note also that given a polynomial $u  \in \RXI{k}$ with associated vector $\u\in\mathcal{H}(I_k)$, there exists a polynomial $g \in \RXI{k}$ (by construction), such that $\u = g(\hat{A}^k) \v^k$.}

Now, the proof proceeds by induction on $p$. 
With $p = 1$, this corresponds to the dense representation result stated in Theorem~\ref{th:densePsatz}.
%
%
\subsection*{Case $p=2$} 
First, note that the running intersection property~\eqref{eq:RIP} \newnc{always} holds in this case.
\if{If $I_1 \cap I_2 = \emptyset$, 
{then $f_1$ and $f_2$ involve independent variables which implies that}
 $f_1(\underline{A}) \succ 0$ and $f_2(\underline{A}) \succ 0$, for all $\underline{A} \in {D_S^\infty}$. 
As a consequence of Theorem~\ref{th:densePsatz}, one has $f_k \in {\cM(S)^k}$, which implies that $f = f_1 + f_2 \in {\cM(S)^{\sparse}}$.

Next, let us suppose that $I_1 \cap I_2 \neq \emptyset$.
}\fi
Let us define the sesquilinear form $(g,h) \mapsto \langle g, h \rangle_{12} := L^1(g^\star h)$ on $\R \langle \underline{X}( I_1 \cap I_2) \rangle$.
As above, we obtain $\mathcal{N}^{12} := \{ h \in \R \langle \underline{X}( I_1 \cap I_2) \rangle : \langle h,h\rangle_{12} = 0 \}$ and 
the Hilbert space completion $\mathcal{H}(I_1 \cap I_2)$ of $\R \langle \underline{X}( I_1 \cap I_2) \rangle / \mathcal{N}^{12}$. 
We denote by ${L^{12}}$ the restriction of $L^1$ (or, equivalently, $L^2$) to $\R \langle \underline{X}( I_1 \cap I_2) \rangle$, and by ${\varphi_{12}}$ the induced state on $\mathcal B(\mathcal{H}(I_1 \cap I_2))$.
{
Let us denote by $\hat{A}_i^{12}$ the corresponding representative of the left multiplication by $X_i$ in $\mathcal{B}(\mathcal{H}(I_1 \cap I_2))$ with respect to this basis, for $i \in I_1 \cap I_2$.
}
For $k \in \{1,2\}$, let us denote by $i_k : \R \langle \underline{X}( I_1 \cap I_2) \rangle \to \RXI{k}$ the canonical embedding.
%
%
Next we apply Theorem~\ref{th:amalgamation} with $I = \{1,2 \}$,  $\mathcal{A} = \mathcal{B} (\mathcal{H}(I_1 \cap I_2))$ endowed with ${\varphi_{12}}$, $\mathcal B_k= \mathcal{B} (\mathcal{H}(I_k))$ endowed with $\varphi_k$, and $\iota_k : \mathcal{B}(\mathcal{H}(I_1 \cap I_2)) \to \mathcal{B}(\mathcal{H}(I_k))$ being the canonical embedding, 
satisfying $\iota_k (\hat{A}_i^{12}) = \hat{A}_i^{k}$ for all $i \in I_1 \cap I_2$ \newnc{(observe that $\mathcal{B}(\mathcal{H}(I_1\cap I_2))$ contains the
algebra generated by $\hat A^{12}_i$ as a dense subset)}. 
{If $I_1 \cap I_2=\emptyset$, then we amalgamate them over $\R$, and otherwise  over $(\mathcal{B} (\mathcal{H}(I_1 \cap I_2),\varphi_{12})$.}
\newnc{Note that $\iota_k$ is state-preserving by construction.} 
As displayed in Figure~\ref{diag:amalgam}, we obtain an amalgamation {$\mathcal D$}
with state $\varphi$
and homomorphisms $j_k :\mathcal{B}(\mathcal{H}(I_k)) \to {\mathcal D}$ such that {$j_1 \circ \iota_1  = j_2 \circ \iota_2$}.

\begin{figure}
\begin{center}
\begin{tikzcd}[row sep=3ex,column sep=1ex]
&  ({\mathcal D},\varphi) \\
{} \\
 \big(\mathcal{B}(\mathcal{H}(I_1)),\varphi_1\big) \arrow[uur,"j_1",dashed] &&  \big(\mathcal{B}(\mathcal{H}(I_2)),\varphi_2\big) \arrow[uul,"j_2"',dashed]  \\
{} \\
\big(\RXI{1},L^1\big) \arrow[uu] && \big(\RXI{2},L^2\big) \arrow[uu] \\
&  \big(\mathcal{B}(\mathcal{H}(I_1\cap I_2)),{\varphi_{12}} \big) \arrow[uuur, "\iota_2"] \arrow[uuul, "\iota_1"'] \\
{} \\
&\big( \R\langle X (I_1 \cap I_2) \rangle,{L^{12}}\big) \arrow[uuur, "i_2"'] \arrow[uuul, "i_1"] \arrow[uu] \\
\end{tikzcd}
\caption{Amalgamation for the case $p=2$.}
\label{diag:amalgam}
\end{center}
\end{figure}
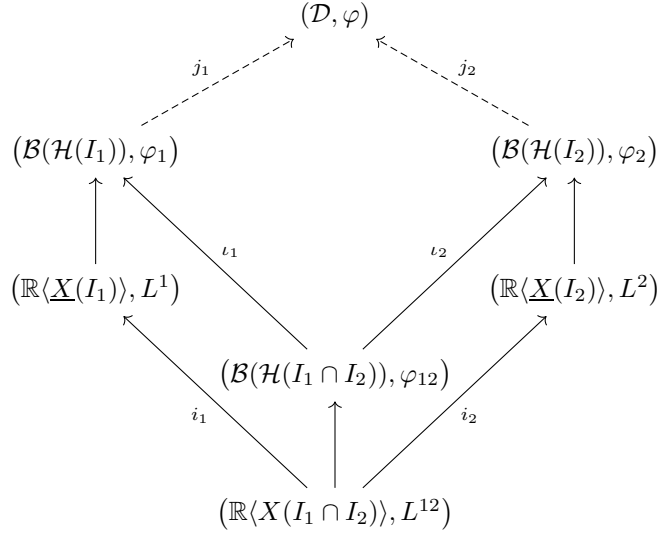

Next perform the GNS construction with $({\mathcal D},\varphi)$. 
There is a Hilbert space $\mathcal K$,
representation $\pi:{\mathcal D}\to\mathcal B(\mathcal K)$ and vector $\xi\in\mathcal K$ so that
$\varphi(a)=\langle \pi(a)\xi,\xi\rangle.$
Then, let us define $\underline{A} := (A_1,\dots,A_n)$, with
\[
A_i :=
\begin{cases} 
\pi(j_1 (\hat{A}_i^1)) &\mbox{if } i \in I_1 \,, \\
\pi(j_2 (\hat{A}_i^2)) & \mbox{if } i \in I_2 \,.
\end{cases}
\]

{By the amalgamation property, this is well-defined since $j_1(\hat{A}_i^1) = j_1 \circ \iota_1 (\hat{A}_i^{12}) = j_2 \circ \iota_2 (\hat{A}_i^{12}) = j_2(\hat{A}_i^2)$ if $i\in I_1\cap I_2$.}

For all $g \in \RX$, we now set $\tilde{L}(g) := \langle g(\underline{A}) \xi, \xi \rangle$.
We claim that $\tilde L$
extends $L^k$. 
Indeed, for $g\in\RXI k$ we have
\[
\begin{split}
\tilde L(g)&=\langle g(\underline{A}) \xi, \xi \rangle
= \langle g(\pi(j_k(\hat A^k))) \xi, \xi \rangle
= \langle \pi(g(j_k(\hat A^k))) \xi, \xi \rangle \\
& = \varphi(g(j_k(\hat A^k))) = 
\varphi(j_k(g(\hat A^k))) =
\varphi_k (g(\hat A^k)) = L^k(g).
\end{split}
\]
{The above equalities come from the fact that nc polynomials commute with homomorphisms (here $\pi, \iota_k$), since they are linear combination of products of letters and homomorphisms are addition, multiplication as well as unit (multiplicative identity) preserving.}

Therefore, 
\[\langle f(\underline{A}) \xi, \xi \rangle=\tilde{L}(f) = \tilde{L}(f_1) + \tilde{L}(f_2) = L^1(f_1) + L^2(f_2) = L(f) \leq  0.
\]

It only remains to prove that $\underline{A} \in {D_S^\infty}$, i.e., that $s(\underline{A}) \succeq 0$, for all $s \in S$. 
By assumption, $s \in \SymRXI{k}$ for some $k \in \{1,2 \}$, so 
\begin{align}
\label{eq:proofA}
s(\underline{A}) = s((\pi\circ j_k) (\hat{A}^k)) = (\pi\circ j_k) (s(\hat{A}^k)) \,.
\end{align}
Since $\RXI{k} / \mathcal{N}^k$ is dense in ${\mathcal{H}(I_k)}$, one can approximate as closely as desired $\u \in {\mathcal{H}(I_k)} $ by elements of $\RXI{k} / \mathcal{N}^k$.
We prove that $\langle s(\hat{A}^k) \u, \u \rangle_k \geq 0$, where $\u$ stands for a vector representative of $u \in \RXI{k} / \mathcal{N}^k$.
Given such a vector $\u$, there exists a polynomial $g \in \RXI{k}$, such that $\u = g(\hat{A}^k) \v^k$. 
Next, the following holds:
\[
\langle s(\hat{A}^k) \u, \u \rangle_k = 
\langle s(\hat{A}^k) g(\hat{A}^k) \v^k, g(\hat{A}^k) \v^k \rangle_k = 
\langle (s g) (\hat{A}^k)  \v^k, g(\hat{A}^k) \v^k \rangle_k = L^k(g^\star s g) \,,
\]
where the last equality comes from~\eqref{eq:proofLk}.
Since $g^\star s g \in {\cM(S)^k}$ and $L^k({\cM(S)^k}) \subseteq \R^{\geq 0}$, one has $\langle s(\hat{A}^k) \u, \u \rangle_k \geq 0$, which implies that $c := s(\hat{A}^k) \succeq 0$. 
Since $c$ is a nonnegative element of the $C^\star$-algebra $ \mathcal{B}(\mathcal{H}(I_k))$, there exists $b \in \mathcal{B}( \mathcal{H}(I_k))$ such that $c = b^\star b$. 
Eventually by~\eqref{eq:proofA}, one has 
$s(\underline{A}) = (\pi \circ j_k)(s(\hat{A}^k)) = \pi ( j_k (c) )= \pi ( j_k (b^\star b)) =  \pi (j_k (b))^\star \pi ( j_k(b)) \succeq 0 $, yielding $\underline{A} \in {D_S^\infty}$, the desired result.
%
\subsection*{General case}
Now assume $p>2$. 
{
For each $m\leq p$ we will construct a Hilbert space $\mathcal H (\cup_{j\leq m} I_j )$ with state $\tilde \varphi_m$  acting on $\mathcal B (\mathcal H(\cup_{j\leq m} I_j  )$, a tuple ${\underline{A}^m\in D_S^\infty}$ as well as a unit vector $\xi^m $ such that the linear functional $\tilde L^m : \RX \to \R$, defined by 
\begin{equation}
\label{eq:newL}
\tilde L^m(g):=\langle g({\underline{A}^m})\xi^m,\xi^m\rangle \,,
\end{equation}
extends $L_j$ for each $j \leq m$, implying that $\tilde L^m(g) = L(g)$ 
for all $g\in\sum_{j\leq m} \R\langle \underline X(I_j)\rangle$. 

The basis for the induction, $p\leq2$, has been established above. \\
Let $p>m\geq2$ and assume by induction that we have $\mathcal H (\cup_{j\leq m} I_j )$, $\tilde \varphi_m$, ${\underline{A}^m\in D_S^\infty}$ and $\tilde L^m$ as above.
By the running intersection property~\eqref{eq:RIP}, there is $k\leq m$ with $\big( \cup_{j\leq m} I_j \big)\cap I_{m+1}\subseteq I_k$.
Recall that $L^{m+1}$ is defined as the restriction of $L$ to $\RXI{m+1}$. 
Let $L^0$ be the restriction of $L$ (or, equivalently, of $\tilde L^m$) to $\RXI k$. As before, 
Theorem \ref{th:gns} produces Hilbert spaces {$\mathcal H(I_{m+1}),\mathcal H_0 := \mathcal H( \big(\cup_{j\leq m} I_j\big)\cap I_{m+1}   )$, operators $\hat A^{m+1}, \hat A^{0}$} and  states $\varphi_{m+1},\varphi_0$ acting on $\mathcal B(\mathcal H (I_{m+1})),\mathcal B(\mathcal H_0)$. 

The operator $\hat{A}^{m+1}$ and  state $\varphi_{m+1}$ satisfy $\varphi_{m+1}( g(\hat{A}^{m+1}) ) = L^{m+1}(g)$, for all $g \in \RXI{m+1}$. 
In addition, the canonical embeddings $ \iota : \mathcal B(\mathcal H_0) \to \mathcal B(\mathcal H (\cup_{j\leq m} I_j ))$, $ \iota_{m+1} : \mathcal B(\mathcal H_0) \to \mathcal B(\mathcal H ( I_{m+1} ))$, and the operator $\hat A^0$ satisfy $ \iota (\hat A_i^0) =  A_i^m$ and $ \iota_{m+1} (\hat A_i^0) = \hat  A_i^{m+1}$, for all $i \in ( \cup_{j\leq m} I_j \big)\cap I_{m+1} $.

The remaining part of the proof is very similar to the case $p = 2$. 
We amalgamate $\mathcal B(\mathcal H (\cup_{j\leq m} I_j ))$ and $\mathcal B(\mathcal H(I_{m+1}))$; if $\big(\cup_{j\leq m} I_j\big)\cap I_{m+1}=\emptyset$, then we amalgamate them over $\R$, and otherwise  over
$(\mathcal B(\mathcal H_0),\varphi_0)$.
Doing so, we obtain an amalgamation $\mathcal{D}_{m+1}$ and two homomorphisms $j : \mathcal B(\mathcal H (\cup_{j\leq m} I_j )) \to \mathcal{D}_{m+1}$ and $ j_{m+1} : \mathcal B(\mathcal H (I_{m+1} )) \to \mathcal{D}_{m+1}$.
%
Applying the GNS construction
to the amalgamated $C^\star$-algebra 
then yields a Hilbert space $\mathcal K_{m+1}$, a 
representation $\pi_{m+1}:{\mathcal D_{m+1}}\to\mathcal B(\mathcal K_{m+1})$, a unit vector $\xi^{m+1}\in\mathcal K_{m+1}$ and we can define $\underline{A}^{m+1}$ with
\[
A_i^{m+1} :=
\begin{cases} 
\pi_{m+1}(j (A_i^m)) &\mbox{if } i \in \cup_{j\leq m} I_j \,, \\
\pi_{m+1}(j_{m+1} (\hat{A}_i^{m+1})) & \mbox{if } i \in I_{m+1} \,,
\end{cases}
\]
as well as $\tilde L^{m+1}(g) := \langle g({\underline{A}^{m+1}}) {\xi^{m+1}}, {\xi^{m+1}} \rangle$.

As in the case $p=2$, by the amalgamation property, $\underline{A}^{m+1}$ is well-defined since $j(A_i^m) = j \circ \iota (\hat{A}_i^{0}) = j_{m+1} \circ \iota_{m+1} (\hat{A}_i^{0}) = j_{m+1}(\hat{A}_i^{m+1})$ if $i\in \big( \cup_{j\leq m} I_j \big)\cap I_{m+1}$. One proves as before that $\underline{A}^{m+1} \in D_S^\infty$. 
In addition, $\tilde L^{m+1}(g) = \tilde L^m(g) = L(g)$  for all $g\in\sum_{j\leq m} \R\langle \underline X(I_j)\rangle$, where the first equality comes from the definition of $\underline{A}^{m+1}$ and the second one comes from the induction hypothesis.
One has $\tilde L^{m+1} = L^{m+1} = L(g)$ for all $g \in \RXI{m+1}$, which implies that $L(g)=\langle g({\underline{A}^{m+1}}) {\xi^{m+1}}, {\xi^{m+1}} \rangle$
for all
$g\in\sum_{j\leq m+1} \R\langle \underline X(I_j)\rangle$. 

For $m=p$, we obtain $\underline{A}^{p} \in D_S^\infty$ and a unit vector $\xi^p$ such that $\langle f({\underline{A}^{p}}) {\xi^{p}}, {\xi^{p}} \rangle = L(f) \leq  0$, yielding the desired conclusion.
}
\end{proof}
\if{
{
\begin{remark}
Theorem~\ref{th:sparsePsatz} can be seen as a noncommutative variant of the result by Lasserre stated in~\cite[Corollar~3.9]{Las06}, related to the existance of a sparse Putinar's representation in the context of sparse polynomial optimization. 
In the sparse commutative case, Lasserre builds a probability measure supported on the set of constraints, which is constructed from the  after assuming that the
Our proof is similar as it builds a linear 
\end{remark}
}
}\fi
The reader will notice that the RIP property is used subtly in the proof of Theorem~\ref{th:sparsePsatz}.
Next, we provide an example demonstrating that sparsity without a RIP-type condition is not sufficient to deduce sparsity in SOHS decompositions.
\begin{example}\rm
Consider the case of three variables $\underline X=(X_1,X_2,X_3)$
and the polynomial
\begin{multline*}
f=(X_1+X_2+X_3)^2 \\ = X_1^2+X_2^2+X_3^2+X_1X_2+X_2X_1+X_1X_3+X_3X_1+X_2X_3+X_3X_2 \in\Sigma\langle\underline X\rangle.
\end{multline*}
Then $f=f_1+f_2+f_3$, with
\[
\begin{split}
f_1 & =\frac12 X_1^2+\frac12 X_2^2+X_1X_2+X_2X_1\in\R\langle X_1,X_2\rangle, \\
f_2 & = \frac12X_2^2+\frac12X_3^2+X_2X_3+X_3X_2\in\R\langle X_2,X_3\rangle,\\
f_3 & = \frac12X_1^2+ {\frac12} X_3^2+X_1X_3+X_3X_1\in\R\langle X_1,X_3\rangle.
\end{split}
\]
However,
the sets $I_1=\{1,2\}$, $I_2=\{2,3\}$ and  $I_3=\{1,3\}$ do not satisfy the RIP condition~\eqref{eq:RIP} and 
 $f\not\in\SigmaX^{\sparse}:=\Sigma\langle X_1,X_2\rangle+\Sigma\langle X_2,X_3\rangle+\Sigma\langle X_1,X_3\rangle$ since it has a unique Gram matrix by homogeneity.

Now consider $S=\{1-X_1^2,\, 1-X_2^2,\, 1-X_3^2\}$. 
Then ${D_S}$ is as in~\eqref{eq:newDS}, ${\cM(S)^{\sparse}}$ is as in~\eqref{eq:sparseMS} and $f|_{{D_S^\infty}}\succeq0$. 
However, we claim that 
{$f-\lambda\in {\cM(S)^{\sparse}}$ iff $\lambda\leq -3$. }
Clearly, 
 \[
f+3=(X_1+X_2)^2+(X_1+X_3)^2+(X_2+X_3)^2+(1-X_1^2)+(1-X_2^2)+(1-X_3^2)\in {\cM(S)^{\sparse}}.
 \]
{
So one has $-3  \leq \sup \{\lambda : f - \lambda \in \cM(S)^{\sparse} \}$, and the dual of this latter problem is given by
\begin{equation}
\label{eq:norip}
\begin{aligned}
\inf_{L_k} \quad  &  \sum_{k=1}^3 L_k(f_k)  \\	
\text{s.t.} 
\quad & L_k(1) = 1 \,, \quad k=1,\dots,3 \,,  \\
\quad & L_k(h^\star h) \succeq 0 \quad \forall h \in \RXI{k} \,,  \quad k=1,\dots,3\,, \\
\quad & L_k(h^\star (1 - X_k^2) h)  \succeq 0 \quad \forall h \in \RXI{k} \,, \quad k=1,\dots,3 \,,\\
\quad &  L_j|_{\R\langle \underline X(I_j\cap I_k)\rangle}=L_k|_{\R\langle \underline X(I_j\cap I_k)\rangle} \,, \quad j,k=1,\dots,3 \,. \\ 
\end{aligned}
\end{equation}
Hence, by weak duality, it suffices to show that there exist linear functionals
$L_k:\RXI k\to\R$ satisfying the constraints of problem \eqref{eq:norip} and such that $\sum_k L_k(f_k)=-3$.
}
\if{
\begin{enumerate}[\rm (1)]
\item\label{it:prop1}
$L_k(1)=1$;
\item\label{it:prop2}
{The corresponding (infinite) Hankel matrix  $\newnc{\M}(L, I_k)$ and its
localizing variants $\newnc{\M}(g L,I_k)$ are positive semidefinite for all $g \in S$ such that $g \in \RXI{k}$};
\item\label{it:prop3}
$L_j|_{\R\langle \underline X(I_j\cap I_k)\rangle}=L_k|_{\R\langle \underline X(I_j\cap I_k)\rangle}$ for all $j,k$;\\[-3mm]
\item\label{it:prop4}
$\sum_k L_k(f_k)=-3$.
\end{enumerate}
}\fi
Define
\[
A=\begin{bmatrix}0&1\\ 1&0\end{bmatrix}, \quad B=-A
\]
and let 
\[L_k(g)=\Trace g(A,B) \quad\text{ for }g\in\RXI k.\]
{
Since $L_k(f_k)=-1$, the three first constraints of problem \eqref{eq:norip} are easily verified and $\sum_k L_k(f_k)=-3$.
 For the last one, given, say 
$h\in\RXI 1\cap\RXI 2=\R\langle X_2\rangle$, we have 
\[
\begin{split}
{L_1(h)}&=\Trace h(B), \\
L_2(h)&=\Trace h(A),
\end{split}
\]
since $L_1$ (resp.~$L_2$) is defined on  $\R\langle X_1, X_2\rangle$ (resp.~$\R\langle X_2, X_3\rangle$) and $h$ depends only on the second (resp.~first) variable $X_2$ corresponding to $B$ (resp.~$A$).}

But matrices $A$ and $B$ are orthogonally equivalent as
$UAU^T=B$ for
\[
U=\left[
\begin{array}{rr}
 0 & 1 \\
 -1 & 0 \\
\end{array}
\right],
\]
whence $h(B)=h(UAU^T)=Uh(A)U^T$ and $h(A)$ have the same trace.
 \end{example}
\section{Sparse GNS Construction and Optimizer Extraction}
\label{sec:extract}
The aim of this section is to provide a general algorithm to extract solutions of sparse noncommutative optimization problems.
We will apply this algorithm below to eigenvalue optimization (Section~\ref{sec:eig}) and trace optimization (Section~\ref{sec:trace}).
For this purpose, we first present sparse noncommutative versions of theorems by Curto and Fialkow.
In the commutative case, Curto and Fialkow provided sufficient conditions for linear functionals on the set of degree $2 d$ polynomials to be represented  by integration with respect to a nonnegative measure.
The main sufficient condition to guarantee such a representation is flatness (see  Definition~\ref{def:flatextension}) of the corresponding Hankel matrix. {This notion was exploited in a noncommutative  setting for the first time by
McCullough \cite{McCullSOS} in his proof of the Helton-McCullough Sums of Squares theorem, cf.~\cite[Lemma 2.2]{McCullSOS}.}

In the dense case {\cite{pironio2010convergent} (see also ~\cite[Chapter 21]{anjos2011handbook} and~\cite[Theorem~1.69]{burgdorf16}) provides a first noncommutative variant for the eigenvalue problem. See~\cite{nctrace} for a similar construction for the trace problem.}
As this will be needed in the sequel, we recall this theorem and a sketch of its proof, which relies on a finite-dimensional GNS construction. 
\begin{theorem}
\label{th:dense_flat}
Let $S  \subseteq \SymRX$ and set $\delta := \max \{ \lceil \deg (g)/2 \rceil : g \in S\cup {\{1\}} \}$.
{For $d \in \N$,} let $L : \RX_{2 d + 2 \delta} \to \R$ be a unital linear functional satisfying $L({\cM(S)_{d +  \delta}}) \subseteq \R^{\geq 0}$.
If $L$ is  $\delta$-flat, then there exist $\hat{A} \in {D_S^r}$ for some $r \leq \bsigma(n,d)$ and a unit vector $\v$ such that
\begin{align}
\label{eq:flat_representation}
{L(g) = \langle g(\hat{A}) \v , \v \rangle } \,,
\end{align}
{for all $g \in \SymRX_{2d}$}.
\end{theorem}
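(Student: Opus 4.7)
The plan is to carry out a finite-dimensional GNS-type construction, using the flatness hypothesis as the replacement for archimedeanity in Theorem~\ref{th:gns}. First, I would define a sesquilinear form on $\RX_d$ by $\langle p, q\rangle := \tilde L(p^\star q) = L(p^\star q)$, which is positive semidefinite by Lemma~\ref{lemma:Hankel} applied to the positivity of $L$ on $\cM(S)_{d+\delta} \supseteq \Sigma\langle \underline X\rangle_d$. Taking the quotient by the nullspace $\cN := \{p\in\RX_d : L(p^\star p)=0\}$ and completing (which here is automatic since everything is finite-dimensional), I obtain a Hilbert space $\mathcal H := \RX_d/\cN$ whose dimension equals $\rank \M_d(\tilde L) =: r \leq \bsigma(n,d)$.

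Next I would construct the left multiplication operators $\hat A_i$ on $\mathcal H$. The key point, and the main technical obstacle, is that $X_i p$ for $p \in \RX_d$ a priori lies in $\RX_{d+1}$, not $\RX_d$, so the map $[p] \mapsto [X_i p]$ is not obviously defined into $\mathcal H$. This is exactly where $\delta$-flatness is used: since $\rank \M_{d+\delta}(L) = \rank \M_d(\tilde L)$ and $\delta \geq 1$ (as $1 \in S\cup\{1\}$ gives $\delta \geq 1$; if $\delta = 0$ then $S \subseteq \R$ and the argument is trivial), every column of $\M_{d+1}(L)$ is a linear combination of columns indexed by $\langle \underline X\rangle_d$. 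Concretely, for each word $w$ of length $d+1$ there exist scalars $c_{w,u}$ with $L(v^\star w) = \sum_u c_{w,u} L(v^\star u)$ for all $v \in \langle \underline X\rangle_{d+\delta}$. This lets me represent the class of $X_i p$ modulo $\cN$ by an element of $\RX_d/\cN$, well-defined independent of the representative. The operators $\hat A_i$ are symmetric because $\langle [X_i p], [q]\rangle = L(p^\star X_i q) = \langle [p], [X_i q]\rangle$. Set $\hat A = (\hat A_1, \dots, \hat A_n) \in \Sbb_r^n$ and $\v := [1]$, which is a unit vector since $L(1)=1$.

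I would then verify the representation formula $L(g) = \langle g(\hat A)\v, \v\rangle$ for $g \in \SymRX_{2d}$. By linearity it suffices to check it on words $w = X_{i_1}\cdots X_{i_k}$ of length $k \leq 2d$; writing $w = u^\star v$ with $u,v \in \langle \underline X\rangle_d$ and using that $\hat A_i$ implements left multiplication by $X_i$ modulo $\cN$, one computes $\langle w(\hat A)\v, \v\rangle = \langle v(\hat A)\v, u(\hat A)\v\rangle = \langle [v], [u]\rangle = L(u^\star v) = L(w)$.

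Finally I must show $\hat A \in D_S^r$, i.e., $s(\hat A) \succeq 0$ for each $s \in S$. For any $[h] \in \mathcal H$ with $h \in \RX_d$, one has $\deg(h^\star s h) \leq 2(d+\delta)$, so using the GNS identity (extended to degree $2(d+\delta)$ via a small additional argument applying flatness again, or directly for $h \in \RX_{d - \lceil \deg s /2\rceil}$),
\[
\langle s(\hat A)[h], [h]\rangle = L(h^\star s h) \geq 0
\]
since $h^\star s h \in \cM(S)_{d+\delta}$. The subtle point is that the above identity must hold for all $h$ producing a dense set of vectors in $\mathcal H$; taking $h$ ranging over $\RX_d$ covers $\mathcal H$ by construction, and flatness ensures $s(\hat A)[h]$ can be realized via elements whose pairings reduce to values of $L$. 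Putting everything together yields the claimed representation and $\hat A \in D_S^r$, completing the proof.
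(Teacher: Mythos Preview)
Your proposal is correct and follows essentially the same finite-dimensional GNS construction as the paper's proof: build the inner-product space from $\M_d(L)$, use flatness to define the left-multiplication operators $\hat A_i$, take $\v=[1]$, and verify both the representation formula and $\hat A\in D_S^r$ via $L(h^\star s h)\geq 0$. One minor slip: the inclusion $1\in S\cup\{1\}$ only yields $\lceil 0/2\rceil=0$, not $\delta\geq 1$, but your parenthetical handling of the $\delta=0$ case (where $S\subseteq\R$) covers this.
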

\begin{proof}
Let $r := \rank {\newnc{\M_{d+\delta}}(L)}$.
Since ${\newnc{\M_{d+\delta}}(L)}$ is a positive semidefinite matrix, we obtain the Gram matrix decomposition ${\newnc{\M_{d+\delta}}(L)} = [\langle \u , \w \rangle]_{u,w}$ with vectors $\u,\w \in \R^r$, where the labels are words of degree at most $d + \delta$. 
Then, we define the following finite-dimensional Hilbert space  
\[
\cH := \text{span} \, \{\w \mid \deg w \leq d + \delta \} = \text{span} \, \{\w \mid \deg w \leq d \} ,
\]
where the equality comes from the flatness assumption.
Afterwards, one can directly consider the operators $\hat{A}_i$ representing the left multiplication by $X_i$ on $\cH$, i.e., $\hat{A}_i \w = \X_i \w$.
Thanks to the flatness assumption, the operators $\hat{A}_i$ are well-defined and one can show that they are symmetric. 
{
Let $\hat{A} := (\hat{A}_1,\dots,\hat{A}_n)$.
As in the GNS construction of Theorem~\ref{th:sparsePsatz}, one has $L(g) = \langle g(\hat{A}) \v , \v \rangle$, with $\v$ being the vector representing $\bfone$ in $\cH$.
Given $s \in S$, let us  prove that $\langle s(\hat{A}) \w , \w \rangle \geq 0$, for all $\w \in \mathcal{H}$. 
By construction, any vector $\w \in \mathcal{H}$ can be written as $g(\hat{A}) \v$, for some polynomial $g\in \SymRX_{2d}$. 
Thus, one has $\langle s(\hat{A}) \w , \w \rangle = \langle s(\hat{A}) g(\hat{A}) \v , g(\hat{A}) \v \rangle = L(g^\star s g) \geq 0$ since $g^\star s g \in{\cM(S)_{d +  \delta}} $.
Thus, one has $\hat{A} \in {D_S^r}$, the desired result.
}
\end{proof}
We now give the sparse version of Theorem~\ref{th:dense_flat}.

\begin{theorem}
\label{th:sparse_flat}
Let $S  \subseteq \SymRX_{2d}$,  and assume ${D_S}$ is as in~\eqref{eq:newDS} with the additional quadratic constraints~\eqref{eq:additional}. 
Suppose Assumption~\ref{hyp:sparsityRIP}(i) holds.
Set $\delta := \max \{ \lceil \deg (g)/2 \rceil : g \in S\cup {\{1\}} \}$.
Let $L : \RX_{2 d + 2 \delta} \to \R$ be a unital linear functional satisfying $L({\cM(S)_d^{\sparse}}) \subseteq \R^{\geq 0}$.
Assume that the following holds:
\begin{enumerate}
\item[(H1)] ${\newnc{\M_{d+\delta}}(L,I_k)}$ and ${\newnc{\M_{d+\delta}}(L,I_k\cap I_j)}$ are $\delta$-flat, for all $j,k \in \{1,\dots,p\}$.
\end{enumerate}
Then, there exist finite-dimensional Hilbert spaces $\cH(I_k)$ with dimension $r_k$, for all $k \in \{1,\dots,p \}$, 
Hilbert spaces
$\cH(I_j \cap I_k)\subseteq\cH(I_j),\cH(I_k)$ for all pairs $(j,k)$ with $I_j \cap I_k \neq 0$, and operators $\hat A^k$, $\hat A^{j k}$,  acting on them, respectively.
Further, there are unit vectors $\v^j\in\cH(I_j)$ and $\v^{jk}\in\cH(I_j\cap I_k)$ such that
\begin{equation}\label{eq:desired}
\begin{split}
L(f) & = \langle f(\hat{A^j}) \v^j , \v^j \rangle \quad \text{for all } f\in\RXI j_{2d},\\
L(g) & = \langle g(\hat{A^{jk}}) \v^{jk} , \v^{jk} \rangle  \quad \text{for all } g\in\R \langle \underline{X}( I_j \cap I_k) \rangle_{2d}.
\end{split}
\end{equation}

Assuming that for all pairs $(j,k)$ with $I_j \cap I_k \neq {\emptyset}$, one has
\begin{enumerate}
\item[(H2)] the matrices $(\hat A_i^{j k})_{i \in I_j \cap I_k}$ have no common complex invariant subspaces,
\end{enumerate}
 then there exist $\underline{A} \in {D_S^r}$, with $r := r_1 \cdots r_p$, and a unit vector $\v$ such that
\begin{align}
\label{eq:sparse_flat_representation}
L(f) = \langle f(\underline{A}) \v , \v \rangle  \,,
\end{align}
for all $f\in\sum_j \RXI j_{2d}$.
\end{theorem}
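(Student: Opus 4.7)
The plan is to split the proof into two stages matching the two conclusions, and in each stage to mirror the corresponding step of the infinite-dimensional argument in Theorem~\ref{th:sparsePsatz}, replacing the GNS construction of Theorem~\ref{th:gns} by the finite-dimensional flat extension Theorem~\ref{th:dense_flat}.

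For the conclusion~\eqref{eq:desired} only (H1) is needed. For each $k$, let $L^k$ denote the restriction of $L$ to $\RXI{k}_{2d+2\delta}$; by construction $L^k$ is unital, nonnegative on $\cM(S)^k_{d+\delta}$, and $\delta$-flat by (H1). Theorem~\ref{th:dense_flat} applied to $L^k$ produces a finite-dimensional Hilbert space $\cH(I_k)$ of dimension $r_k=\rank\newnc{\M_d}(L,I_k)$, a tuple $\hat{A}^k$ of symmetric operators on $\cH(I_k)$ satisfying $s(\hat{A}^k)\succeq 0$ for every $s\in S\cap\SymRXI{k}$, and a unit vector $\v^k$ satisfying the first equality of~\eqref{eq:desired}. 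The same argument applied to the restriction of $L$ to $\R\langle\underline X(I_j\cap I_k)\rangle_{2d+2\delta}$ produces $\cH(I_j\cap I_k)$, $\hat{A}^{jk}$ and $\v^{jk}$ realising the second equality. Since $L^j$ and $L^k$ both restrict to this common functional on $\R\langle\underline X(I_j\cap I_k)\rangle_{2d+2\delta}$, uniqueness of the flat GNS data up to unitary equivalence yields canonical state-preserving isometries $\cH(I_j\cap I_k)\hookrightarrow\cH(I_k)$, inducing $\star$-embeddings $\iota_k:\mathcal{B}(\cH(I_j\cap I_k))\hookrightarrow\mathcal{B}(\cH(I_k))$ with $\iota_k(\hat{A}_i^{jk})=\hat{A}_i^k$ for all $i\in I_j\cap I_k$.

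For~\eqref{eq:sparse_flat_representation} I would proceed by induction on $p$, following the inductive structure of the proof of Theorem~\ref{th:sparsePsatz} step by step. The base case $p=1$ is Theorem~\ref{th:dense_flat}. For the step $m\to m+1$, the running intersection property~\eqref{eq:RIP} yields $k\leq m$ with $\bigl(\bigcup_{j\leq m}I_j\bigr)\cap I_{m+1}\subseteq I_k$; set $\cH_0:=\cH\bigl(\bigl(\bigcup_{j\leq m}I_j\bigr)\cap I_{m+1}\bigr)$. Amalgamate the finite-dimensional $C^\star$-algebras $\mathcal{B}(\mathcal{K}_m)$ (containing the inductively built representation of dimension at most $r_1\cdots r_m$) and $\mathcal{B}(\cH(I_{m+1}))$ over $\mathcal{B}(\cH_0)$ via Theorem~\ref{th:amalgamation}, and then perform the GNS construction on the amalgamated algebra to obtain $(\mathcal{K}_{m+1},\underline{A}^{m+1},\xi^{m+1})$. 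The verifications that $\tilde L^{m+1}$ restricts to $L^j$ for each $j\leq m+1$ and that $\underline{A}^{m+1}\in D_S^\infty$ are then identical to those carried out in Theorem~\ref{th:sparsePsatz}.

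The principal obstacle is ensuring that this amalgamation remains finite-dimensional with a dimension bound of $r_1\cdots r_p$, an issue absent from Theorem~\ref{th:sparsePsatz}. This is exactly where (H2) enters: Burnside's theorem, combined with the hypothesis that $(\hat{A}_i^{jk})_{i\in I_j\cap I_k}$ has no common complex invariant subspace, implies that the $\star$-algebra generated by these operators coincides with the full matrix algebra $\mathcal{B}(\cH(I_j\cap I_k))$. Consequently, the image of $\mathcal{B}(\cH_0)$ inside both $\mathcal{B}(\mathcal{K}_m)$ and $\mathcal{B}(\cH(I_{m+1}))$ has trivial commutant, and the double commutant theorem supplies tensor factorisations $\mathcal{B}(\cH(I_{m+1}))\simeq\mathcal{B}(\cH_0)\otimes\mathcal{B}(\mathcal{C}_{m+1})$ and $\mathcal{B}(\mathcal{K}_m)\simeq\mathcal{B}(\cH_0)\otimes\mathcal{B}(\mathcal{C}_m)$ for complementary spaces $\mathcal{C}_m,\mathcal{C}_{m+1}$. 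The amalgamated product then admits a concrete finite-dimensional realisation as the matrix algebra on $\mathcal{C}_m\otimes\cH_0\otimes\mathcal{C}_{m+1}$, of dimension at most $r_1\cdots r_{m+1}$. Iterating to $m=p$ delivers the advertised bound $r=r_1\cdots r_p$, and the inclusion $\underline{A}\in D_S^r$ then follows from the same $g^\star s g\in\cM(S)^k_{d+\delta}$ positivity argument as in Theorem~\ref{th:sparsePsatz}.
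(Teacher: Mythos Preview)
Your first stage, establishing~\eqref{eq:desired} from (H1) via Theorem~\ref{th:dense_flat} applied to each restriction, is exactly what the paper does, and your observation about the isometric inclusions $\cH(I_j\cap I_k)\hookrightarrow\cH(I_k)$ is correct.

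The second stage has the right architecture but a genuine gap. You correctly identify that (H2) together with Burnside forces the $\star$-algebra generated by $(\hat A_i^{jk})$ to be the full matrix algebra $\Mbb_{r_{jk}}(\R)$, and that Skolem--Noether then yields a tensor factorisation of the embeddings (your ``trivial commutant'' phrasing is inaccurate---the commutant of $\iota_k(\Mbb_{r_{12}})$ in $\Mbb_{r_k}$ is the complementary factor $\Mbb_{r_k/r_{12}}\otimes I$, not trivial---but the conclusion you draw from it is right). The algebra-level amalgamation on $\mathcal C_m\otimes\cH_0\otimes\mathcal C_{m+1}$ is indeed straightforward once this is in place.

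What is missing is the vector. Constructing the algebra amalgamation is not the hard part; the hard part is producing a \emph{single} unit vector $\v$ in the finite-dimensional space that simultaneously realises $L$ on both pieces. Invoking Theorem~\ref{th:amalgamation} followed by GNS gives you a state and a cyclic vector, but on a possibly infinite-dimensional space, and you give no argument that this state descends to your concrete $\Mbb_{r_1r_2}$. The paper does not use Theorem~\ref{th:amalgamation} here at all. Instead it proves directly (this is where Lemma~\ref{lem:linalg1} is used) that under the tensor identification $\cH(I_k)\cong\R^{r_k/r_{12}}\otimes\cH(I_1\cap I_2)$, the cyclic vector necessarily has the rank-one form $\v^k=\bigl(\sum_j\lambda_{jk}e_j^k\bigr)\otimes\v^{12}$. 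Only then can one choose an orthogonal $U$ aligning the two tensor-factor vectors $\w^1,\w^2$, and this choice of $U$ enters the definition of the embedding $j_2$ in~\eqref{eq:j2}. Without this step you have a commuting diagram of algebras but no compatible vector state, and~\eqref{eq:sparse_flat_representation} does not follow.

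A minor point: your inductive step invokes the running intersection property~\eqref{eq:RIP}, but the theorem assumes only Assumption~\ref{hyp:sparsityRIP}(i). The paper sidesteps this by treating $p=2$ in detail and asserting the general case follows similarly; you should either do the same or note the additional hypothesis.
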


In the proof of Theorem \ref{th:sparse_flat}
we will make use of the following simple linear algebra observation.

\begin{lemma}\label{lem:linalg1}
Let $Z\in M_n(\R)$. If
$\Trace(ZA)=0$  for all $A\in M_n(\R)$, then $Z=0$.
\end{lemma}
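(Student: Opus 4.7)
The plan is to exploit the standard fact that the trace pairing $(Z,A)\mapsto\Trace(ZA)$ is a nondegenerate bilinear form on $M_n(\R)$, by testing the hypothesis against the elementary matrix basis. Concretely, I would let $E_{ji}$ denote the matrix with a $1$ in position $(j,i)$ and zeros elsewhere, and compute $\Trace(Z E_{ji})$. A direct computation shows
\[
\Trace(Z E_{ji}) = \sum_{k=1}^n (Z E_{ji})_{kk} = \sum_{k=1}^n \sum_{\ell=1}^n Z_{k\ell}(E_{ji})_{\ell k} = Z_{ij},
\]
since $(E_{ji})_{\ell k}$ is nonzero only when $\ell = j$ and $k = i$.

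Applying the hypothesis with $A = E_{ji}$ then gives $Z_{ij} = 0$. Ranging over all $i,j \in \{1,\dots,n\}$ yields $Z = 0$. No obstacle arises here: the argument is a one-line application of the dual basis provided by the elementary matrices, and the entire content of the lemma is the statement that the Frobenius pairing is nondegenerate. Alternatively, one could phrase it by taking $A = Z^T$ and observing that $\Trace(Z Z^T) = \|Z\|_F^2 = 0$ forces $Z = 0$, but the elementary-matrix approach is cleaner since it avoids invoking positivity of the Frobenius norm.
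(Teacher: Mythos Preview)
Your proof is correct. Interestingly, the paper takes precisely the alternative you mention and set aside at the end: it puts $A = Z^T$, obtains $\Trace(ZZ^T)=0$, and concludes $ZZ^T=0$ and hence $Z=0$. Your elementary-matrix argument extracts each entry $Z_{ij}$ directly and makes the nondegeneracy of the trace pairing explicit; the paper's Frobenius-norm route is a one-liner once one recognizes $\Trace(ZZ^T)$ as (a positive multiple of) the sum of the squared entries. Both are standard and equally valid here. One minor notational point: in this paper $\Trace$ denotes the \emph{normalized} trace $\tfrac{1}{n}\sum_i a_{ii}$, so strictly your computation yields $\Trace(Z E_{ji}) = \tfrac{1}{n} Z_{ij}$, which of course does not affect the conclusion.
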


\begin{proof}
We have $\Trace(ZZ^T)=0$ whence $ZZ^T=0$ and thus
$Z=0$.
\end{proof}

\begin{proof}[Proof of Theorem \ref{th:sparse_flat}]
Start by applying Theorem \ref{th:dense_flat} to
$L|_{\RXI j}$ and $L|_{\R\langle \underline X(I_j\cap I_k)\rangle}$ to obtain the desired {(real)} Hilbert spaces
$\cH(I_j)$, $\cH(I_j\cap I_k)$, unit vectors $\v ^j$, $\v^{jk}$ 
and operators $\hat A^j,$ $\hat A^{jk}$ satisfying
\eqref{eq:desired}. Note that we may assume $\cH(I_j\cap I_k)\subseteq\cH(I_j),\cH(I_k)$ as the map $f(\hat A^{jk})\v^{jk}\mapsto f(\hat A^{j})\v^j$ is an isometry by construction. Then 
\begin{equation}\label{eq:restrict}
\hat A^{jk}=\hat A^j|_{\cH(I_j\cap I_k)}=\hat A^k|_{\cH(I_j\cap I_k)}.
\end{equation}

Let us denote by $\cA(I_j)$ and $\cA(I_j\cap I_k)$ the algebras generated by $\hat A^j$,  and $\hat A^{jk}$, respectively.
By \eqref{eq:restrict}, the map $\hat A_\ell^{jk}\mapsto \hat A_\ell^j$ is a $\star$-homomorphism $\cA(I_j)\to\cA(I_j\cap I_k)$.
With $r_k=\dim \cH(I_k)$ and $r_{jk}=\dim \cH(I_j\cap I_k)$, one has $\cA (I_k) \subseteq \Mbb_{r_k}(\R)$ and $\cA(I_j\cap I_k)\subseteq \Mbb_{r_{jk}}(\R)$.
%

We next want to find a finite-dimensional $C^\star$-algebra $\cA$, i.e., a subalgebra of $\Mbb_m(\R)$ for some $m\in\N$, making the diagram in Figure~\ref{diag:amalgamSmall} commute.

{
In the sequel, the proof proceeds by induction on $p$ and we focus specifically on the case $p = 2$, as the general case then follows by a simple inductive argument.
}
By the amalgamation property of $C^\star$-algebras stated in Theorem \ref{th:amalgamation}, we can always find such an \emph{infinite-dimensional} $\cA$.
However, as shown in Example~\ref{ex:no_finite_amalgam}, there may not be a suitable finite-dimensional $\cA$.
\begin{figure}[ht]
\begin{center}
\begin{tikzcd}[row sep=3ex,column sep=1ex]
& \mathcal A \\
{} \\
\mathcal A(I_1) \arrow[uur,"j_1",dashed] && \mathcal A(I_2) \arrow[uul,"j_2"',dashed]  \\
{} \\
& \mathcal A(I_1\cap I_2) \arrow[uur, "\iota_2"'] \arrow[uul, "\iota_1"] 
\end{tikzcd}
\caption{Amalgamation of finite-dimensional $C^\star$-algebras}\label{diag:amalgamSmall}
\label{fig:2}
\end{center}
\end{figure}
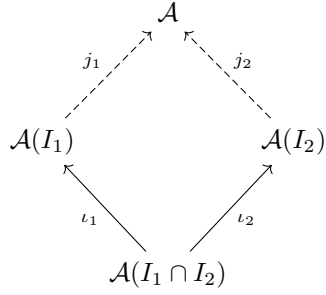
To ensure this, we assume that (H2) holds, namely that the matrices $(\hat A_i^{1 2})_{i \in I_1 \cap I_2}$ have no common complex invariant subspaces, {which implies by Burnside's theorem (see, e.g., \cite[Corollary 5.23]{bresar2014}) that} $\cA (I_1\cap I_2)= \Mbb_{r_{12}}(\R)$.
Then, for all $A\in \cA (I_1\cap I_2) = \Mbb_{r_{1 2}}(\R)$, $\iota_k(A)$ is just a direct sum of copies of $A$, up to orthogonal equivalence (by the Skolem-Noether theorem~\cite[Section III.3]{BOns}), 
i.e., there are orthogonal matrices $U_k$ such that
\[
\iota_k(A)=U_k^{T}( \I_{r_k / r_{12}}\otimes A) U_k \,,
\]
for all $k \in \{1,2\}$.
By replacing $\hat A^{k}$ with their conjugates 
$U_k^T\hat A^kU_k$,
and $\v^k$ by $U_k^T\v^k$, we may without loss of generality assume $\iota_k(A)= \I_{r_k / r_{12}}\otimes A$.

The linear functional $L$ induces linear functionals
$\check L^k$, $\check L^{12}$ on $\cA(I_k)$, $\cA(I_1\cap I_2)$ given by
$B\mapsto \Trace(B\v^k(\v^k)^T)$
and
$C\mapsto \Trace(C\v^{12}(\v^{12})^T)$, respectively.
 Write $\v^k=\sum_{j=1}^{r_k/r_{12}} e_j^k \otimes u^k_j$ for the standard basis vectors $e_j^k\in\R^{r_k/r_{12}}$ and some vectors $u^k_j\in\R^{r_{12}}$. 
Then for $C\in \mathcal A(I_1\cap I_2)= M_{r_{12}}(\R)$
 we have
 \[
 \begin{split}
\check L^{12}(C) &= \Trace(C \v^{12}(\v^{12})^T) \\
& = \check L^k( \I_{r_k/r_{12}}\otimes C) = 
\Trace((\I \otimes C)\v^k(\v^k)^T)\\
&= \Trace\Big((\I\otimes C)(\sum_{j=1}^{r_k/r_{12}}e_j^k\otimes u^k_j)(\sum_{j=1}^{r_k/r_{12}}e_j^k\otimes u^k_j)^T\Big)\\
&= \Trace\Big((\I\otimes C)\sum_{i,j=1}^{r_k/r_{12}}(e_j^k (e_i^k)^T)\otimes (u^k_j(u^k_i)^T)\Big)\\
&=  \sum_{i,j=1}^{r_k/r_{12}}\Trace\big((e_j^k (e_i^k)^T)\otimes Cu^k_j(u^k_i)^T\big)   \\
&=  \sum_{i,j=1}^{r_k/r_{12}}\Trace\big(e_j^k (e_i^k)^T\,\big)\Trace\big( Cu^k_j(u^k_i)^T\big)   \\
&= \Trace\big( C \sum_{j=1}^{r_k/r_{12}}u^k_j(u^k_j)^T)\big).
\end{split}
\]
From the equality $\Trace(C \v^{12}(\v^{12})^T)=\Trace\big( C \sum_{j=1}^{r_k/r_{12}}u^k_j(u^k_j)^T)\big)$ for all $C\in M_{r_{12}}(\R)$ we deduce using Lemma \ref{lem:linalg1}
that $\v^{12}(\v^{12})^T=\sum_{j=1}^{r_k/r_{12}}u^k_j(u^k_j)^T$. Since the left-hand side outer product is rank one, each of the $u_j^k$ must be a scalar multiple of $\v^{12}$, say $u_j^k=\lambda_{jk}\v^{12}$.
Thus $\v^k=\sum_{j=1}^{r_k/r_{12}}\lambda_{jk}e_j^k\otimes\v^{12}$
and $\sum_j \lambda_{jk}^2=1$ since $\|\v^k\|=1$.

Now set $\cA := \Mbb_{r_1 r_2}(\R)$ and define $j_1 (A) : =\I_{r_2}\otimes A$, for all $A \in \cA(I_1)$, and 
\begin{equation}\label{eq:j2}
j_2(B) := (U^T\otimes \I_{r_{12}})( \I_{r_1}\otimes B)(U\otimes \I_{r_{12}}) \,,
\end{equation}
for all $B \in \cA(I_2)$. Here $U$ is an $r_1 r_2/r_{12}$ orthogonal matrix to be determined later. 
This amalgamates the diagram in Figure~\ref{diag:amalgamSmall} (independently of the choice of $U$).

Each extension of the linear functional $\check L^k$ to
a linear functional on $\cA$
 is of the form
\begin{equation}\label{eq:extendingPain}
C\mapsto \Trace\big(C \sum_{\ell=1}^{r_{3-k}}\mu_{\ell k} e_{\ell}^{3-k}\otimes \v^k\big)=
\Trace\Big(C \underbrace{\sum_{\ell}\sum_{j=1}^{r_k/r_{12}} \mu_{\ell k}e_{\ell}^{3-k}\otimes \lambda_{jk}e_j^k}_{\w^k}\otimes\v^{12}\Big),
\end{equation}
where $\sum_\ell \mu_{\ell k}^2=1$.
Since the vectors $\w^k$ are norm one, there is a unitary $U$ with $U\w_1=\w_2$. Using this $U$ in the definition \eqref{eq:j2}, the extension \eqref{eq:extendingPain} of $\check L^1$ to a linear functional $\check L:\mathcal A\to\R$ also extends $\check L^2$ (via $j_2$).

Now define the operators $\underline{A} := (A_1,\dots,A_n)$, with
\[
A_i :=
\begin{cases} 
j_1 (\hat A_i^1) &\mbox{if } i \in I_1 \,, \\
j_2 (\hat A_i^2) & \mbox{if } i \in I_2 \,.
\end{cases}
\]
Then $L(f)=\langle f(\underline A) (\w^1\otimes \v^{12}),\w^1\otimes\v^{12}\rangle$ for all $f\in\RXI 1_{2d}+\RXI 2_{2d}$.

To conclude the proof note that each $A_i$ is symmetric  and that $\underline A\in {D_S^{r_1 r_2}}$. 
For the latter we use the fact that each constraint $g$ is either in $\RXI 1$ or $\RXI 2$, and that $\star$-subalgebras of matrix algebras admit square roots of positive semidefinite operators.
\end{proof}
\begin{example}[Non-amalgamation in the category of finite-dimensional algebras]
\label{ex:no_finite_amalgam}\rm
For given $I_1,I_2$, suppose $\cA(I_1\cap I_2)$ is generated by the $2\times2$ diagonal matrix
\[A^{1 2} =\begin{pmatrix}
1 \\ &2
\end{pmatrix},
\]
and assume $\cA(I_1)=\cA(I_2)= \Mbb_3(\R)$. 
(Observe that $\cA(I_1\cap I_2)$ is the algebra of all diagonal matrices.)
For each $k \in \{1,2\}$, let us define $\iota_k(A):= A \oplus k$, for all $A \in \cA(I_1\cap I_2)$.
We claim that there is no finite-dimensional
$C^\star$-algebra
 $\cA$ amalgamating the above Figure \ref{fig:2}.
Indeed, by the Skolem-Noether theorem, every homomorphism $\Mbb_n(\R)\to \Mbb_m(\R)$ is of the form
$x\mapsto P^{-1}(x\otimes \I_{m/n})P$ for some invertible $P$; in particular, $n$ divides $m$. 
If a desired $\cA$ existed, then the matrices
$(A^{12}\oplus 1)\otimes \I_k$ and $(A^{12}\oplus 2)\otimes \I_k$ would be similar.
But they are not as is easily seen from eigenvalue multiplicities.
\end{example}
\begin{remark}
\label{rk:gns_sparse}\rm
Theorem~\ref{th:sparse_flat} can be seen as a noncommutative variant of the result by Lasserre stated in~\cite[Theorem~3.7]{Las06}, related to the minimizers extraction in the context of sparse polynomial optimization. 
In the sparse commutative case, Lasserre assumes flatness of each moment matrix indexed in the canonical basis of $\R[\uX(I_k)]_d$, for each $k \in \{1,\dots,p\}$, which is similar to our flatness condition~(H1). 
{
The difference is that this technical flatness condition on each $I_k$ adapts to the degree of the constraints polynomials on variables in $I_k$, resulting in an adapted parameter $\delta_k$ instead of global $\delta$. 
We could assume the same in Theorem~\ref{th:sparse_flat} but for the sake of simplicity, we assume that these parameters are all equal. 
}
In addition, Lasserre assumes that each moment matrix indexed in the canonical basis of $\R[\uX(I_j \cap I_k)]_d$ is rank one, for all pairs $(j,k)$ with $I_j \cap I_k \neq \emptyset$, which is the commutative analog of our irreducibility condition~(H2).
\end{remark}
\subsection{Implementing the Sparse GNS Construction}

As in the dense case, we can summarize the sparse GNS construction procedure described in the proof of Theorem~\ref{th:sparse_flat} into an algorithm, called $\sparsegns$, stated below in Algorithm~\ref{algorithm:sparse_gns}, for the case $p=2$ (the general case is similar).

This algorithm describes how to compute the tuple $\underline{A} = (A_1,\dots,A_n)$ of amalgamated matrices acting on $\cH=\R^{r_1 r_2}\cong\cH(I_1)\otimes \R^{r_2}\cong\R^{r_1}\otimes\cH(I_2)$, and a vector $\v$ satisfying \eqref{eq:sparse_flat_representation}. 
\newnc{To check the irreducibility (H2) condition, in Line \ref{en:checkH1H2} of the algorithm, one relies on the 
Burnside theorem from matrix theory (see, e.g., \cite[Corollary 5.23]{bresar2014}): the algebra generated by $e \times e$ (real) symmetric matrices is irreducible if and only if it is isomorphic to $\Mbb_e(\R)$. So one only needs to check the dimension of the algebra.
}

\begin{algorithm}
$\sparsegns$
\label{algorithm:sparse_gns}
\begin{algorithmic}[1]
\Require ${\newnc{\M_d}(L)}$, Hankel matrix of $L$. 
\State \label{en:1}
Apply the GNS construction to obtain $\cH(I_1)$, $\cH(I_2)$ and $\cH(I_1 \cap I_2)$ of respective dimensions $r_1$, $r_2$ and $r_{1 2}$, associated to ${\newnc{\M_d}(L,I_1)}$, ${\newnc{\M_d}(L,I_2)}$ and ${\newnc{\M_d}(L,I_1 \cap I_2)}$, as well as $\hat A^1$, $\hat A^2$ and $\hat A^{1 2}$ acting on $\cH(I_1)$, $\cH(I_2)$ and $\cH(I_1 \cap I_2)$, respectively. \Comment{~the dense GNS algorithm is implemented in e.g.~$\ncsostools$~\cite{cafuta2011ncsostools}}
\State Find the corresponding unit vectors $\v^1\in \cH(I_1)$, $\v^2\in \cH(I_2)$ and $\v^{12}\in\cH(I_1 \cap I_2)$ so that \eqref{eq:desired} holds.
\If{The flatness (H1) and irreducibility (H2) conditions from Theorem~\ref{th:sparse_flat} do not hold}  \label{en:checkH1H2} 
\State Stop
\EndIf 
\For{$k \in \{1,2\}$, $i\in I_1\cap I_2$}
\State \label{en:2} \newnc{Compute $(\chi_{i,\ell}^k)_\ell$ such that the block diagonalization $\hat A_i^k = \op{diag}(\chi_{i,\ell}^k)_\ell$ holds}.
\Comment{e.g., by~\cite[Algorithm 4.1]{MKKK10}}
\State \label{en:3}
Compute invertible matrices $(P_\ell)_{\ell>1}$ such that 
$
P_{\ell}^{-1} \chi_{i,\ell}^k P_{\ell} =\chi_{i,1}^k
$
\State \label{en:3b}
Normalize each $P_{\ell}$ to make it orthogonal.
Use them to change the basis in the blocks $(\chi_{i,\ell}^k)_{\ell>1}$ \Comment{Thus, one has $\hat A_i^k= \I \otimes\chi_{i}^k$}
\State \label{en:4}
Compute an orthogonal $P$ such that
$P^{-1}\chi_i^k P = \hat A_i^{12}$
\Comment{Hence, without loss of generality, $\hat A_i^{12}= \chi_{i}^k$}
\State \label{en:+1}
Decompose $\v^k=\sum_j \lambda_{jk}e_j^k\otimes\v^{12}$
\EndFor
\State \label{en:+2}
Find an orthogonal matrix $U$ sending $e_1^1\otimes\sum_{j}\lambda_{j1}e_j^1 \mapsto e_1^2\otimes\sum_{j}\lambda_{j2}e_j^2 $
\For{$i\in\{1,\dots,n\}$}
\State 
$A_i :=
\begin{cases}
\I_{r_2} \otimes \hat A_{i}^1   & \text{if } i\in I_1,\\
 (U^T\otimes\I_{r_{12}})(\I_{r_1} \otimes  \hat A_{i}^2)(U\otimes\I_{r_{12}})& \text{if } i\in I_2
\end{cases}
$ \label{en:5} 
\EndFor
\State\label{en:+3} Compute $\v = e_1^1\otimes\v^1$
\Ensure $\underline{A} = (A_1,\dots,A_n)$ and $\v$.
\end{algorithmic}
\end{algorithm}
\begin{corollary}
\label{coro:sparse_gns_sound}
The procedure $\sparsegns$ described in Algorithm~\ref{algorithm:sparse_gns} is sound and returns the tuple $\underline{A}$ and the vector $\v$ from Theorem~\ref{th:sparse_flat}.
\end{corollary}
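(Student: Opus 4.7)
The plan is to verify that Algorithm~\ref{algorithm:sparse_gns} faithfully implements the constructive argument of the proof of Theorem~\ref{th:sparse_flat}, checking that each operation is well-defined under the assumptions (H1) and (H2). I focus on $p=2$ as in the algorithm; the general case then follows by the inductive argument appearing at the end of the proof of Theorem~\ref{th:sparse_flat}.

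First, Line~\ref{en:1} is a direct invocation of the finite-dimensional GNS construction of Theorem~\ref{th:dense_flat}, applied separately to the $\delta$-flat restrictions $\M_d(L,I_1)$, $\M_d(L,I_2)$, $\M_d(L,I_1 \cap I_2)$. This produces the Hilbert spaces $\cH(I_1)$, $\cH(I_2)$, $\cH(I_1 \cap I_2)$ of respective dimensions $r_1$, $r_2$, $r_{12}$, together with the symmetric tuples $\hat A^1$, $\hat A^2$, $\hat A^{12}$ and the unit vectors $\v^1$, $\v^2$, $\v^{12}$ realizing the representations~\eqref{eq:desired}. The check in Line~\ref{en:checkH1H2} guarantees that the hypotheses (H1) and (H2) of Theorem~\ref{th:sparse_flat} hold from this point onwards.

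Next, under (H2), Burnside's theorem yields $\cA(I_1 \cap I_2) = \Mbb_{r_{12}}(\R)$, so that the argument of the proof of Theorem~\ref{th:sparse_flat} applies: by the Skolem-Noether theorem, the canonical embedding $\iota_k : \cA(I_1 \cap I_2) \to \cA(I_k)$ is of the form $A \mapsto \I_{r_k/r_{12}} \otimes A$ up to an orthogonal conjugation. Lines~\ref{en:2}--\ref{en:4} realize this equivalence explicitly: Line~\ref{en:2} produces the block-diagonal form $\hat A_i^k = \mathrm{diag}(\chi_{i,\ell}^k)_\ell$ via a standard algorithm such as~\cite[Algorithm~4.1]{MKKK10}; Line~\ref{en:3} supplies the intertwiners $P_\ell$ between blocks, which exist by Skolem-Noether because each block generates an algebra isomorphic to $\Mbb_{r_{12}}(\R)$; Line~\ref{en:3b} normalizes them to orthogonal matrices; Line~\ref{en:4} then identifies the common block with $\hat A_i^{12}$. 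After these steps, $\hat A_i^k = \I_{r_k/r_{12}} \otimes \hat A_i^{12}$ holds in the new orthonormal basis.

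Lines~\ref{en:+1}--\ref{en:+2} handle the vector part of the amalgamation. The decomposition in Line~\ref{en:+1} is precisely the one derived in the proof of Theorem~\ref{th:sparse_flat} from the rank-one identity $\v^{12}(\v^{12})^T = \sum_j u_j^k (u_j^k)^T$, forcing each $u_j^k$ to be a scalar multiple of $\v^{12}$. Line~\ref{en:+2} then produces the orthogonal $U$ aligning $\w^1$ with $\w^2$, ensuring that the two extensions of $\check L^1,\check L^2$ to $\cA = \Mbb_{r_1 r_2}(\R)$ coincide, exactly as in the paragraph following~\eqref{eq:extendingPain}. Finally Lines~\ref{en:5} and~\ref{en:+3} record the amalgamated tuple $\underline A$ and the vector $\v = e_1^1 \otimes \v^1$ yielding~\eqref{eq:sparse_flat_representation}. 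The main subtlety to verify is Line~\ref{en:3b}: one must argue that the intertwiners $P_\ell$ produced by Skolem-Noether can indeed be rescaled to \emph{orthogonal} rather than merely invertible matrices. This follows from the symmetry of the tuples $\hat A^k$ and $\hat A^{12}$, which makes the associated representations orthogonal; any invertible intertwiner can then be replaced by the orthogonal factor in its polar decomposition, which remains an intertwiner since both representations preserve the inner product.
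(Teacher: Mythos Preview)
Your proposal is correct and follows essentially the same approach as the paper: both verify the algorithm line by line against the constructive proof of Theorem~\ref{th:sparse_flat}, invoking Skolem--Noether for Lines~\ref{en:3} and~\ref{en:4} and isolating Line~\ref{en:3b} as the one point needing a separate justification. The only noticeable difference is in that justification: the paper shows directly that if conjugation by $P$ preserves transposes on all of $\Mbb_{r_{12}}(\R)$ then $PP^T$ is central and hence scalar, forcing $P$ to be a scalar multiple of an orthogonal matrix; you instead appeal to the polar decomposition, using the standard fact that when an invertible $P$ intertwines two $\star$-representations, $P^\star P$ commutes with the first representation, so $|P|$ does as well, and the orthogonal factor $U$ in $P=U|P|$ remains an intertwiner. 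Both arguments are valid and short; the paper's is slightly more self-contained, while yours is the familiar operator-algebraic route.
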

\begin{proof}
Correctness of the algorithm has been essentially established in the proof of Theorem \ref{th:sparse_flat}.
Both computation in Line~\ref{en:3} and Line~\ref{en:4} can be performed since the only homomorphisms out of full matrix algebras are ampliations composed with a conjugation (by the Skolem-Noether theorem).
One can perform an orthogonal change of basis in Line~\ref{en:3b}, and $\hat A_i^k= \I \otimes\chi_{i}^k$, for all $k\in \{1,2\}$ and $i \in I_1 \cap I_2$. 
Indeed, let us assume that a matrix $P$ is invertible, and the map $\phi:A\mapsto P^{-1}AP$  from $\Mbb_n(\R)$ to $\Mbb_n(\R)$ preserves transposes. 
Then, the following equalities 
\[
\begin{split}
\phi(A^T) = P^{-1} A^T P = (P^{-1}AP)^T= P^T A^T P^{-T}
\end{split}
\]
imply that $PP^T$ commutes with all $n\times n$ matrices. Therefore, $PP^T$ is a scalar matrix, and $P$ is a scalar multiple of an orthogonal matrix, the desired result. 
%
Eventually, each component of the tuple $A$, given in Line~\ref{en:5}, is well defined by construction and gives 
rise to the desired amalgamation. Line \ref{en:+3} constructs the vector $\v$ needed for \eqref{eq:desired} to hold.
\end{proof}
\section{Eigenvalue Optimization of Noncommutative Sparse  Polynomials}
\label{sec:eig}

The aim of this section is to provide SDP relaxations allowing one to under-approximate the smallest eigenvalue that a given nc polynomial can attain on a tuple of symmetric matrices from a given semialgebraic set.
The unconstrained case is handled in Section~\ref{sec:unconstr_eig}, where we show how to compute a lower bound on the smallest eigenvalue via solving an SDP.
The constrained case is handled in Section~\ref{sec:constr_eig}, where we derive a hierarchy of lower bounds converging to the minimal eigenvalue, assuming that the quadratic module is archimedean and that RIP holds (Assumption~\ref{hyp:sparsityRIP}).

We first recall the celebrated Helton-McCullough Sums of Squares theorem~\cite{Helton02,McCullSOS} stating the equivalence between sums of hermitian squares (SOHS) and positive semidefinite nc polynomials. 
\begin{theorem}
\label{th:Helton}
Given $f \in \RX$, we have $f(\underline{A}) \succeq 0$, for all $\underline{A} \in \Sbb^n$, if and only if $f \in \SigmaX$.
\end{theorem}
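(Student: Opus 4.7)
The ``if'' direction is immediate: each hermitian square evaluates on $\underline A \in \Sbb^n$ as $h(\underline A)^\star h(\underline A) \succeq 0$, so a sum of such is still positive semidefinite.

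For the non-trivial ``only if'' direction, I would argue by contrapositive. Suppose $f \in \SymRX_{2d}$ is not a sum of hermitian squares. The goal is to construct a tuple $\underline A \in \Sbb^n$ and a unit vector $\v$ with $\langle f(\underline A) \v, \v \rangle < 0$. The first step is to establish that $\SigmaX_d \subseteq \SymRX_{2d}$ is a \emph{closed} convex cone in the finite-dimensional vector space $\SymRX_{2d}$. Closedness requires a Carath\'eodory-type bound on the number of squares needed in any SOHS decomposition (roughly $\dim \SymRX_{2d}+1$), together with a compactness argument on the set of admissible Gram matrices arising from Proposition~\ref{prop:ncGram}.

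Once closedness is in hand, Hahn-Banach separation (applied in $\SymRX_{2d}$ and extended symmetrically to $\RX_{2d}$) produces a linear functional $L : \RX_{2d} \to \R$ with $L(\SigmaX_d) \subseteq \R^{\geq 0}$ and $L(f) < 0$. By Lemma~\ref{lemma:Hankel}, this is equivalent to the noncommutative Hankel matrix $\newnc{\M_d}(L)$ being positive semidefinite. I would then perform a finite-dimensional GNS-style construction: form the sesquilinear form $(h_1, h_2) \mapsto L(h_1^\star h_2)$ on $\RX_d$, whose nullspace $\mathcal N := \{h \in \RX_d : L(h^\star h) = 0\}$ is a vector subspace by Cauchy--Schwarz, and let $V := \RX_d / \mathcal N$ with the induced inner product. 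Operators $\hat X_i$ on $V$ should represent left multiplication, with $\v \in V$ being the image of $\bfone$; a direct computation then gives $\langle f(\hat X_1,\dots,\hat X_n) \v, \v \rangle = L(f) < 0$, yielding the desired contradiction.

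The main obstacle is precisely the well-definedness and symmetry of the operators $\hat X_i$ on $V$: left multiplication by $X_i$ carries $\RX_d$ to $\RX_{d+1}$, which lies outside the domain of $L$. The standard way around this, due to McCullough, is to first replace $L$ by a \emph{flat extension} $\tilde L : \RX_{2d+2} \to \R$ satisfying $\tilde L(\SigmaX_{d+1}) \subseteq \R^{\geq 0}$ and $\rank \newnc{\M_{d+1}}(\tilde L) = \rank \newnc{\M_d}(L)$ (cf.~Definition~\ref{def:flatextension}). Flatness forces each column of the extended Hankel matrix indexed by $X_i w$ to lie in the span of columns indexed by words of length $\leq d$, which makes $\hat X_i$ well-defined on the quotient and automatically symmetric thanks to the symmetry of $\tilde L$. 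Producing such a flat extension is the technical heart of the argument: one can either perturb $L$ slightly (ensuring strict positivity on a dense subcone) and then pass to a compactness limit, or invoke an explicit flat extension construction along the lines of~\cite[Lemma 2.2]{McCullSOS}. This last step is where all the real work lies, since the rest of the argument is bookkeeping.
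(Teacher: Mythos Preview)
The paper does not prove Theorem~\ref{th:Helton}; it is stated as a recalled result with a citation to \cite{Helton02,McCullSOS}, and no argument is given. So there is nothing in the paper to compare your proposal against.

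On its own merits, your outline is the standard McCullough route and is essentially correct: closedness of $\SigmaX_d$ in $\SymRX_{2d}$, Hahn--Banach separation to produce $L$ with $L(\SigmaX_d)\subseteq\R^{\geq0}$ and $L(f)<0$, then a finite-dimensional GNS construction to extract a tuple $\underline A$ and vector $\v$ with $\langle f(\underline A)\v,\v\rangle=L(f)<0$. You have also correctly located the one genuine technical obstruction, namely that left multiplication by $X_i$ escapes $\RX_d$, and the remedy via a flat one-step extension of $L$ as in \cite[Lemma~2.2]{McCullSOS}. This is precisely the content of Theorem~\ref{th:dense_flat} in the paper (specialized to $S=\emptyset$, $\delta=1$), so once you have the flat extension the remainder is indeed bookkeeping. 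One small comment: you do not actually need to perturb $L$ to get flatness in the unconstrained case; McCullough's argument constructs the flat extension directly from any positive $L$ by a rank-preserving completion of the Hankel matrix, and that is the cleanest way to close the gap you flagged.
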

In contrast with the constrained case where we obtain the analog of Putinar's Positivstellensatz in Theorem~\ref{th:sparsePsatz}, there is no sparse analog of Theorem \ref{th:Helton}, as shown in the following example. 
{
\begin{lemma}
\label{lemma:nosparseHelton}
There exist polynomials which are sparse sums of hermitian squares but are not sums of sparse hermitian squares.
\end{lemma}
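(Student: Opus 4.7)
The plan is to take as witness the polynomial $f=(X_1+X_2+X_3)^2$ from the example preceding the lemma, together with the cover $I_1=\{1,2\}$, $I_2=\{2,3\}$, $I_3=\{1,3\}$ (which fails RIP, as already observed). Then $f$ is a single hermitian square, and the decomposition $f=f_1+f_2+f_3$ displayed in that example shows $f$ is sparse with respect to this cover; so $f$ is a sparse sum of hermitian squares in the naive sense. The task is therefore to show that $f\notin\SigmaX^{\sparse}=\Sigma\langle X_1,X_2\rangle+\Sigma\langle X_2,X_3\rangle+\Sigma\langle X_1,X_3\rangle$.

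First I would invoke homogeneity: any SOHS representation of the degree-$2$ homogeneous polynomial $f$ must have a Gram matrix indexed by the degree-one word vector $(X_1,X_2,X_3)^T$, and matching coefficients of $f$ uniquely determines it as the $3\times 3$ all-ones matrix $G$. A purported decomposition $f=g_{12}+g_{23}+g_{13}$ with $g_{jk}\in\Sigma\langle X_j,X_k\rangle$ would then give a decomposition $G=G_{12}+G_{23}+G_{13}$ into PSD matrices $G_{jk}\succeq 0$, each supported only on rows and columns indexed by $\{j,k\}$. Since each monomial $X_jX_k$ ($j\neq k$) in $f$ can be contributed only by $G_{jk}$, its off-diagonal entry is forced to $1$; writing $G_{jk}=\bigl(\begin{smallmatrix}\alpha_{jk}&1\\1&\beta_{jk}\end{smallmatrix}\bigr)$ on its support, the PSD condition yields $\alpha_{jk}\beta_{jk}\geq 1$ for each of the three blocks.

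Next I would line up the diagonal constraints: the six nonnegative diagonal parameters split into three complementary pairs, one per variable $X_i^2$, each pair summing to $1$. Multiplying the three PSD inequalities and regrouping the factors by diagonal variable rewrites the product as a product of these three pairs, which by AM-GM is at most $(1/4)^3=1/64$. This contradicts the lower bound $1$ coming from positivity, so no such sparse decomposition of $f$ exists. The only real obstacle is the combinatorial bookkeeping of the six diagonal slots against the three forced off-diagonal $1$'s; once properly organized, the PSD-determinant inequalities versus AM-GM on the complementary partitions produce the contradiction in one line.
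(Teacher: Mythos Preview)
Your argument is correct: the homogeneity reduction forces any sparse SOHS certificate for $(X_1+X_2+X_3)^2$ to come from a decomposition $G=G_{12}+G_{23}+G_{13}$ of the all-ones $3\times3$ matrix into PSD matrices supported on the three pairs, and your AM-GM count rules this out. (An alternative one-line endgame: since $G$ has rank one, any vector in $\ker G$, e.g.~$v=(1,-1,0)^T$, must be annihilated by each $G_{jk}$; applying this to $G_{13}$ forces its $(1,1)$ entry to vanish, hence its $(1,3)$ entry vanishes, contradicting that it must equal $1$.)

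The paper proves the lemma with a genuinely different witness. It constructs a quartic $f$ in $X_1,X_2,X_3$ that is sparse with respect to the \emph{two-block} cover $I_1=\{1,2\}$, $I_2=\{2,3\}$, which does satisfy RIP. The Gram matrices of $f$ form a one-parameter family $G(\alpha)$; the matrix is PSD only for $\alpha$ in an interval bounded away from $0$, while a sparse SOHS certificate would require $\alpha=0$. So both proofs hinge on analysing the Gram matrix, but the paper exploits a family with a forbidden parameter value, whereas you exploit uniqueness plus a combinatorial obstruction. Your example is simpler and self-contained, essentially fleshing out the claim the paper only sketched in the earlier example. The paper's example, however, is stronger in context: the whole point of the lemma is to contrast with Theorem~\ref{th:sparsePsatz}, which assumes RIP. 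Your cover $I_1=\{1,2\}$, $I_2=\{2,3\}$, $I_3=\{1,3\}$ fails RIP, so your witness only shows that the sparse Helton theorem fails without RIP --- something already expected from the preceding discussion. The paper's witness shows it fails even \emph{with} RIP, which is the sharper and more relevant statement for Section~\ref{sec:eig}. If you want to match the paper's intent, you should either switch to a RIP-compatible example or at least flag this distinction.
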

\begin{proof}
Let $v=\begin{bmatrix}X_1&X_1X_2&X_2&X_3&X_3X_2 \end{bmatrix}$, 
\begin{equation}\label{eq:gram}
G=\left[\begin{array}{rrrrr}
1&-1&-1&0&\alpha\\
-1&2&0&-\alpha&0\\
-1&0&3&-1&9\\
0&-\alpha&-1&6&-27\\
\alpha&0&9&-27&142
 \end{array}\right],\qquad \alpha\in\R,
\end{equation} 
and consider
\begin{equation}\label{eq:polySohs}
\begin{split}
f&=vGv^\star \\
&= X_1^2-X_1 X_2-X_2 X_1+3
   X_2^2-2
   X_1 X_2 X_1+2 X_1 X_2^2 X_1 \\ 
   &\phantom{=\ }
-X_2 X_3-X_3 X_2+6 X_3^2
   +9 X_2^2 X_3+9 X_3  X_2^2-54
   X_3 X_2 X_3 +142
   X_3 X_2^2 X_3.
\end{split}
\end{equation}
The polynomial $f$ is clearly sparse w.r.t.~$I_1=\{x_1,x_2\}$ and $I_2=\{x_2,x_3\}$. Note that the matrix $G$ is positive semidefinite if and only if  $0.270615 \lesssim \alpha \lesssim 1.1075$, whence $f$  is a sparse polynomial that is an SOHS.

We claim that $f\not\in\SigmaXI1+\SigmaXI2$, i.e., $f$ is not a sum of sparse hermitian squares. By the Newton chip method \cite[Section 2.3]{burgdorf16} only monomials in $v$ can appear in a sum of squares decomposition of $f$. Further, every Gram matrix of $f$ (with border vector $v$) is of the form \eqref{eq:gram}. However, the matrix $G$ with $\alpha=0$ is not positive semidefinite, hence $f\not\in\SigmaXI1+\SigmaXI2$.
\end{proof}
}
\subsection{Unconstrained Eigenvalue Optimization with Sparsity}
\label{sec:unconstr_eig}
~\\
Let $\I$ stands for the identity matrix. 
Given $f \in \SymRX$ of degree $2 d$, the smallest eigenvalue of $f$ is obtained by solving the following  optimization problem
\begin{align}
\label{eq:eigmin}
\lambda_{\min}(f) := \inf \{\langle f(\underline{A}) \v,  \v \rangle : \underline{A} \in \Sbb^n, \| \v \| = 1 \} \,.
\end{align}
The optimal value $\lambda_{\min}(f)$ of Problem~\eqref{eq:eigmin} is the greatest lower bound on the eigenvalues of $f(\underline{A})$ over all $n$-tuples $\underline{A}$ of real symmetric matrices. 
Problem~\eqref{eq:eigmin} can be rewritten as follows:
\begin{equation}
\label{eq:eigmin2}
\begin{aligned}
\lambda_{\min}(f) = \sup\limits_{\lambda} \quad  & \lambda  \\	
\text{s.t.} 
\quad & f(\underline{A}) - \lambda \I \succeq 0  \,, \quad \forall \underline{A} \in \Sbb^n \,,
\end{aligned}
\end{equation}
which is in turn equivalent to
\begin{equation}
\label{eq:eigmin_dual}
\begin{aligned}
\lambda_{\min,d}(f) = \sup\limits_{\lambda} \quad  & \lambda  \\	
\text{s.t.} 
\quad & f(\underline{X}) - \lambda \in \SigmaX_{d} \,,
\end{aligned}
\end{equation}
as a consequence of Theorem~\ref{th:Helton}.

The dual of SDP~\eqref{eq:eigmin_dual} is 
\begin{equation}
\label{eq:eigmin_primal}
\begin{aligned}
L_{\sohs,d}(f) = \inf\limits_{\newnc{L}} \quad  & \langle {\newnc{\M_d}(L)}, G_f \rangle  \\	
\text{s.t.} 
\quad & \newnc{L(1) = 1} \,, \quad {\newnc{\M_d}(L)} \succeq 0 \,,\\
\quad & \newnc{L : \RX_{2 d} \to \R \, 
 \text{ linear}}\,,
\end{aligned}
\end{equation}
%
where $G_f$ is a Gram matrix for $f$ (see Proposition~\ref{prop:ncGram}).\\ 
One can compute $\lambda_{\min}(f)$ by solving a single SDP,  either SDP~\eqref{eq:eigmin_primal} or 
SDP~\eqref{eq:eigmin_dual}, since there is no duality gap between these two programs (see, e.g.,~\cite[Theorem~4.1]{burgdorf16}), that is, one has $L_{\sohs,d}(f) = \lambda_{\min,d}(f) = \lambda_{\min}(f)$.

Now, we address eigenvalue optimization for a given  sparse nc polynomial $f = f_1 + \dots + f_p$ of degree $2 d$, with $f_k \in \SymRXI{k}_{2d}$, for all $k=1,\dots,p$.
For all $k=1,\dots,p$, let $G_{f_k}$ be a Gram matrix associated to $f_k$.
The sparse variant of SDP~\eqref{eq:eigmin_primal} is
\begin{equation}
\label{eq:sparse_eigmin_primal}
\begin{aligned}
L^{\sparse}_{\sohs,d}(f) = \inf\limits_{\newnc{L}} \quad  & \sum_{k=1}^p \langle {\newnc{\M_d}(L,I_k)}, G_{f_k} \rangle  \\	
\text{s.t.} 
\quad & \newnc{L(1) = 1} \,, \quad {\newnc{\M_d}(L,I_k)}  \succeq 0 \,, \quad k=1,\dots,p \,, \\ 
\quad & \newnc{L : \RXI{1}_{2 d} + \dots + \RXI{p}_{2 d} \to \R \,
\text{ linear}}\,,
\end{aligned}
\end{equation}
%
whose dual is the sparse variant of SDP~\eqref{eq:eigmin_dual}:
\begin{equation}
\label{eq:sparse_eigmin_dual}
\begin{aligned}
\lambda^{\sparse}_{\min,d}(f) = \sup\limits_{\lambda} \quad  & \lambda  \\	
\text{s.t.} 
\quad & f - \lambda  \in \SigmaXI{1}_{2 d} + \dots + \SigmaXI{p}_{2 d} \,,
\end{aligned}
\end{equation}
To prove that there is no duality gap between SDP~\eqref{eq:sparse_eigmin_primal} and SDP~\eqref{eq:sparse_eigmin_dual}, we need a sparse variant of~\cite[Proposition~3.4]{mccullough2005noncommutative}, which says that $\SigmaX_d$ is closed in $\RX_{2 d}$:
\begin{proposition}
\label{prop:sparseclosed}
The set $\SigmaX^{\sparse}_d$ is a closed convex subset of $\RXI{1}_{2 d} + \dots + \RXI{p}_{2 d}$.
\end{proposition}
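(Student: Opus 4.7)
Convexity is immediate: each $\SigmaXI{k}_{2d}$ is a convex cone (closed under sums and nonnegative scalings, since the same is true of hermitian squares), and a finite sum of convex cones is convex. The content of the statement is thus closedness.

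The plan is to realize $\SigmaX^{\sparse}_d$ as the image of the closed convex cone
\[
\mathcal{K} := \bigoplus_{k=1}^p \{ G_k \in \Sbb_{r_k} : G_k \succeq 0 \}, \qquad r_k := \bsigma(n_k, d),
\]
under the linear map
\[
\Phi : \mathcal{K} \longrightarrow \sum_{k=1}^p \RXI{k}_{2d}, \quad (G_1,\dots,G_p) \longmapsto \sum_{k=1}^p \W_d(I_k)^\star G_k \W_d(I_k),
\]
where $\W_d(I_k)$ denotes the vector of words of degree at most $d$ in the variables indexed by $I_k$. Surjectivity of $\Phi$ onto $\SigmaX^{\sparse}_d$ follows from Proposition~\ref{prop:ncGram} applied in each $\RXI{k}$. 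Given $f^{(n)} \to f$ with $f^{(n)} = \Phi(G^{(n)})$ for some $G^{(n)} \in \mathcal{K}$, I will argue that the $G^{(n)}$ can be chosen bounded; a convergent subsequence then delivers $f = \Phi(G) \in \SigmaX^{\sparse}_d$ by continuity of $\Phi$.

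For the boundedness, I would argue by contradiction. Suppose $M_n := \max_k \|G_k^{(n)}\| \to \infty$. The normalized tuples $G^{(n)}/M_n$ belong to $\mathcal{K}$ and have $\max_k\|G_k^{(n)}/M_n\| = 1$, so after passing to a subsequence they converge to some $(H_1,\dots,H_p) \in \mathcal{K}$ with $\max_k\|H_k\| = 1$ and $\Phi(H_1,\dots,H_p) = \lim f^{(n)}/M_n = 0$. Setting $h_k := \W_d(I_k)^\star H_k \W_d(I_k) \in \SigmaXI{k}_{2d}$, the identity $\sum_k h_k = 0$ together with the fact that each $h_k$ takes PSD values on every symmetric tuple $\underline{A}$ forces $h_k(\underline{A}) = 0$ for all $\underline{A}$, and hence $h_k = 0$ as an nc polynomial. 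Writing $H_k = B_k^\star B_k$, the identity $\W_d(I_k)^\star B_k^\star B_k \W_d(I_k) = 0$ forces $B_k \W_d(I_k) = 0$; linear independence of the words in $\W_d(I_k)$ then gives $B_k = 0$ and hence $H_k = 0$, contradicting $\max_k \|H_k\| = 1$.

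The step I expect to be the main obstacle is the passage from $\sum_k h_k = 0$ to each $h_k = 0$ individually. Here the noncommutative structure plays the key role: each $h_k$ is an SOHS and is therefore PSD-valued on symmetric tuples by construction (compare Theorem~\ref{th:Helton}), a finite sum of PSD matrices vanishes only when each summand does, and an nc polynomial that vanishes on all matrix tuples must be the zero polynomial. Once this rigidity is established, the remainder is a standard compactness argument exploiting that $G \mapsto \W_d(I_k)^\star G \W_d(I_k)$ has trivial kernel on the PSD cone. The overall strategy mirrors~\cite[Proposition~3.4]{mccullough2005noncommutative} from the dense setting, and notably does not invoke the RIP assumption.
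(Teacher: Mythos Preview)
Your proof is correct and follows essentially the same compactness strategy as the paper: parametrize the cone, normalize an unbounded sequence, and derive a contradiction from the limit lying in the kernel of the parametrization map; the only cosmetic difference is that you parametrize by Gram matrices $(G_k)_k$ while the paper parametrizes by tuples of polynomials $(h_{kj})_{k,j}$. Your explicit justification of the key implication $\sum_k h_k = 0 \Rightarrow h_k = 0$ (evaluate on symmetric tuples, use that a sum of PSD matrices vanishes only if each does) is in fact more careful than the paper's, which asserts the analogous statement ``$0 \notin \mathcal{V}$, implying that $0 \notin \phi(\mathcal{V})$'' without spelling out this step.
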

\begin{proof}
For each $k \in \{1,\dots,p\}$, we endow each $\RXI{k}_{2 d}$ with a norm $\|\cdot\|_k$. 
For each $f\in \RXI{1}_{2d} + \dots + \RXI{p}_{2d}$,  we set 
\[
\| f \| := \inf \left\lbrace \| f_1 \|_1 + \dots + \| f_p \|_2 : f = f_1 + \dots + f_p, f_k \in \RXI{k}_{2d} \right\rbrace
\]
Let us consider an element $h = h_1 + \dots + h_p \in \SigmaX^{\sparse}_d$, with $h_k \in \SigmaXI{k}_d$.
For each $k \in \{1,\dots,p\}$, $h_k$ can be written as a sum of at most $\bsigma(n_k,d)$ hermitian squares of degree at most $2 d$ by Proposition~\ref{prop:ncGram}. 
Define the mapping
\begin{align*}
\phi_k : (\RXI{k}_d)^{\bsigma(n_k,d)} & \to \RXI{k}_{2d}\\
(h_{k j})_{j=1}^{\bsigma(n_k,d)} & \mapsto \sum_{j=1}^{\bsigma(n_k,d)} h_{kj}^\star h_{k j} \,,
\end{align*}
and let us denote $\h_k = (h_{k j})_{j=1}^{\bsigma(n_k,d)}$.
Then, the image of the map $\phi$, defined by
\begin{align*}
\phi : \prod_{k=1}^p (\RXI{k}_d)^{\bsigma(n_k,d)}  & \to \RXI{1}_{2d} + \dots + \RXI{p}_{2d} \\
(\h_1, \dots,\h_p) & \mapsto \phi_1(\h_1) + \dots + \phi_p(\h_p) \,,
\end{align*}
is equal to $\SigmaX^{\sparse}_d$. 

Let us define the subset $\mathcal{V} \subset \prod_{k=1}^p (\RXI{k}_d)^{\bsigma(n_k,d)} $ by 
\[
\mathcal{V} := \left\lbrace \h = \h_1 + \dots + \h_p : 
\sum_{k=1}^p
\left\| \sum_{j=1}^{\bsigma(n_k,d)} h_{k j}^\star h_{k j} \right\|_k
= 1 \right\rbrace \,.
\] 
Since  $\mathcal{V}$ is compact, then $\phi (\mathcal{V})$ is also compact. 
Note that $0 \notin \mathcal{V}$, implying that $0 \notin \phi (\mathcal{V})$.
Next, let us consider a sequence $(f^{\ell})_{\ell \geq 1}$ of elements of $\SigmaX^{\sparse}_d$, converging to $f \in \RXI{1}_{2d} + \dots + \RXI{p}_{2d}$.
One can write $f^{\ell} = \lambda^{\ell} v^{\ell}$ for $\lambda^{\ell} \in \R^{\geq 0}$ and $v^{\ell} \in \phi(\mathcal{V})$. 
By compactness of $\phi (\mathcal{V})$, there exists a subsequence of $(v^{\ell})_{\ell}$, (also denoted $(v^{\ell})_{\ell}$), which converges to $v \in \phi (\mathcal{V})\subset \SigmaX^{\sparse}_d$. 
By definition of $\|\cdot\|$ and $\mathcal{V}$, one has $\|v^{\ell}\| \leq 1$, for all $\ell \geq 1$. 
Since $0 \notin \phi(\mathcal{V})$ and $\phi(\mathcal{V})$ is compact, there exists an $\varepsilon > 0$ such that $\|v^{\ell}\| > \varepsilon$, for all $\ell \geq 1$.
Therefore,  $\lambda^{\ell} = \frac{\|f^{\ell}\|}{\|v^{\ell}\|}$ converges to $ \frac{\|f\|}{\|v\|}$, as $\ell$ goes to infinity.
From this, we deduce that $f^{\ell}$ converges to $f = \frac{\|f\|}{\|v\|} v \in \SigmaX^{\sparse}_d$, yielding the desired result.
\end{proof}
From Proposition~\ref{prop:sparseclosed}, we obtain the following theorem which does not require Assumption~\ref{hyp:sparsityRIP}.
\begin{theorem}
\label{th:sparse_eig_nogap}
Let $f \in \SymRX$ of degree $2 d$, with $f = f_1 + \dots + f_p$, $f_k \in \SymRXI{k}_{2d}$, for all $k=1,\dots,p$. 
Then, one has $\lambda^{\sparse}_{\min,d}(f) = L^{\sparse}_{\sohs,d}(f)$, i.e., there is no duality gap between SDP~\eqref{eq:sparse_eigmin_primal} and SDP~\eqref{eq:sparse_eigmin_dual}.
\end{theorem}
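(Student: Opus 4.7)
The plan is to transplant the classical argument for no duality gap from the dense unconstrained eigenvalue case (see \cite[Theorem 4.1]{burgdorf16}) to our sparse setting; the only essential change is to use Proposition~\ref{prop:sparseclosed} in place of the dense closedness of $\SigmaX_d$.

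First I would verify weak duality $\lambda^{\sparse}_{\min,d}(f) \leq L^{\sparse}_{\sohs,d}(f)$ by direct computation. Given $L$ feasible for~\eqref{eq:sparse_eigmin_primal} and $\lambda$ feasible for~\eqref{eq:sparse_eigmin_dual}, write $f - \lambda = \sum_{k=1}^p h_k$ with $h_k \in \SigmaXI{k}_d$. The hypothesis $\M_d(L, I_k) \succeq 0$ combined with Lemma~\ref{lemma:Hankel} applied inside $\RXI{k}$ yields $L(h_k) \geq 0$, whence $L(f) - \lambda = \sum_k L(h_k) \geq 0$. Since $L(f_k) = \langle \M_d(L, I_k), G_{f_k}\rangle$ by Proposition~\ref{prop:ncGram}, taking infima and suprema on both sides gives the claim.

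For the converse inequality, I would argue by contradiction. Suppose $\lambda^{\sparse}_{\min,d}(f) < L^{\sparse}_{\sohs,d}(f)$, fix $\lambda$ strictly between them, and observe that $f - \lambda \notin \SigmaX^{\sparse}_d$. By Proposition~\ref{prop:sparseclosed}, the set $\SigmaX^{\sparse}_d$ is a closed convex cone in the finite-dimensional space $\RXI{1}_{2d} + \dots + \RXI{p}_{2d}$, so the Hahn-Banach separation theorem produces a nonzero linear functional $\tilde L$ on this space with $\tilde L(f - \lambda) < 0$ and $\tilde L(g) \geq 0$ for every $g \in \SigmaX^{\sparse}_d$. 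In particular, $\tilde L \geq 0$ on each $\SigmaXI{k}_d$, which by Lemma~\ref{lemma:Hankel} forces $\M_d(\tilde L, I_k) \succeq 0$, and $\tilde L(1) \geq 0$ since $1 \in \SigmaX^{\sparse}_d$.

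The main obstacle is to rule out the degenerate case $\tilde L(1) = 0$, which would prevent rescaling $\tilde L$ to a primal-feasible unital $L$. To handle it, I would perturb $\tilde L$ by a small positive multiple of the ``evaluation at the origin'' functional $L_0$ sending every polynomial to its constant term; a direct check shows $\M_d(L_0, I_k) \succeq 0$ and $L_0 \geq 0$ on $\SigmaX^{\sparse}_d$. For $\eta > 0$ small enough, the perturbation $\tilde L + \eta L_0$ still satisfies $(\tilde L + \eta L_0)(f - \lambda) < 0$, takes the value $\eta > 0$ at $1$, and remains nonnegative on $\SigmaX^{\sparse}_d$. Rescaling to a unital functional then produces an $L$ feasible for~\eqref{eq:sparse_eigmin_primal} with $L(f) < \lambda < L^{\sparse}_{\sohs,d}(f)$, the desired contradiction.
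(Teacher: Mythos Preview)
Your proposal is correct and follows essentially the same approach as the paper: both reduce strong duality to the dense argument of \cite[Theorem~4.1]{burgdorf16}, with Proposition~\ref{prop:sparseclosed} supplying the closedness of $\SigmaX^{\sparse}_d$ needed for the separation step. Your write-up is simply a fleshed-out version of what the paper leaves as a reference, including the standard perturbation by the evaluation-at-zero functional to dispose of the $\tilde L(1)=0$ case.
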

\begin{proof}
The strong duality is obtained exactly as for the dense case~\cite[Theorem~4.1]{burgdorf16}, and relies on the closedness of $\SigmaX^{\sparse}_d$, stated in Proposition~\ref{prop:sparseclosed}.
\if{
By weak duality, one has $\lambda^{\sparse}_{\min,d}(f)  \leq L^{\sparse}_{\sohs,d}(f)$. 
Then, one has $L_{\sohs,d}(f) = \lambda_{\min,d}(f)$ by~\cite[Theorem~4.1]{burgdorf16}.
Further, note that SDP~\eqref{eq:sparse_eigmin_primal} is a relaxation of SDP~\eqref{eq:eigmin_primal}, implying that $L^{\sparse}_{\sohs,d}(f) \leq L_{\sohs,d}(f)$.
By combining the above (in)equalities, we obtain 
\[
L^{\sparse}_{\sohs,d}(f) \leq L_{\sohs,d}(f) = \lambda_{\min,d}(f) = \lambda^{\sparse}_{\min,d}(f) \leq L^{\sparse}_{\sohs,d}(f) \,,
\]
yielding the desired result.
}\fi
\end{proof}
\begin{remark} \rm
\label{rk:sparsevsdense}
By contrast with the dense case, it is not enough to compute the solution of SDP~\eqref{eq:sparse_eigmin_primal} to obtain the optimal value $\lambda_{\min}(f)$ of the unconstrained optimization problem~\eqref{eq:eigmin}.
However, one can still compute a certified lower bound  $\lambda^{\sparse,d}_{\min}(f)$ by solving a single SDP, either in the primal form~\eqref{eq:sparse_eigmin_primal} or in the dual form~\eqref{eq:sparse_eigmin_dual}. 
Note that the related computational cost is potentially much less expensive. 
Indeed, SDP~\eqref{eq:sparse_eigmin_dual} involves $\sum_{k=1}^p \bsigma(n_k,2 d)$ equality constraints and  $\sum_{k=1}^p\bsigma(n_k,d)+1$ variables. 
This is in contrast with the dense version~\eqref{eq:eigmin_dual}, which involves $\bsigma(n,2 d)$ equality constraints and  $1 + \bsigma(n,d)$ variables. 
\end{remark}
%
\subsection{Constrained Eigenvalue Optimization with Sparsity}
\label{sec:constr_eig}
~\\
Here, we focus on providing lower bounds for the constrained eigenvalue optimization of nc polynomials. 
Given $f \in \SymRX$ and $S := \{g_1,\dots,g_{m} \} \subset \SymRX$ as in~\eqref{eq:DS}, let us define  $\lambda_{\min} (f, S)$ as follows:
\begin{align}
\label{eq:constr_eigmin}
\lambda_{\min}(f,S) := \inf \{\langle f(\underline{A}) \v , \v \rangle : \underline{A} \in {D_S^\infty}, \| \v \| = 1 \} \,,
\end{align}
which is, as for the unconstrained case, equivalent to 
\begin{equation}
\label{eq:constr_eigmin2}
\begin{aligned}
\lambda_{\min}(f,S) = \sup\limits_{\lambda} \quad  & \lambda  \\	
\text{s.t.} 
\quad & f(\underline{A}) - \lambda \I \succeq 0  \,, \quad \forall \underline{A} \in {D_S^\infty} \,.
\end{aligned}
\end{equation}
Let $d_j := \lceil \deg g_j / 2 \rceil$, for each $j=1,\dots,m$ and $d := \max \{\lceil \deg f / 2 \rceil, d_1, \dots, d_m \}$.
As shown in~\cite{pironio2010convergent,cafuta2012constrained}
(see also \cite{Helton04}), one can approximate $\lambda_{\min} (f, S)$ from below via the following hierarchy of SDP programs, indexed by $s \geq d$:
\begin{equation}
\label{eq:constr_eigmin_dual}
\begin{aligned}
\lambda_s (f,S) := \sup\limits_{\lambda} \quad  & \lambda  \\	
\text{s.t.} 
\quad & f - \lambda \in {\cM(S)_s} \,.
\end{aligned}
\end{equation}
The dual of SDP~\eqref{eq:constr_eigmin_dual} is 
\begin{equation}
\label{eq:constr_eigmin_primal}
\begin{aligned}
L_s(f,S) := \inf\limits_{\newnc{L}} \quad  & \langle {\newnc{\M_s}(L)}, G_f \rangle  \\	
\text{s.t.} 
\quad & \newnc{L(1) = 1} \,, \\
\quad & {\newnc{\M_s}(L)} \succeq 0 \,, 
\quad {\newnc{\M_{s-d_j}}(g_j L)} \succeq 0 \,, \quad j=1,\dots,m \,, \\ 
\quad & \newnc{L : \RX_{2 d}  \to \R \,
\text{ linear}}\,,
\end{aligned}
\end{equation}
%
%
Under additional assumptions, this hierarchy of primal-dual SDP~\eqref{eq:constr_eigmin_dual}-\eqref{eq:constr_eigmin_primal} converges to the value of the constrained eigenvalue problem.
\begin{corollary}
\label{th:constr_eig}
\newnc{Assume that ${D_S}$ is as in~\eqref{eq:newDS} with the additional quadratic constraints~\eqref{eq:additional}} and that the quadratic module $M_S$ is archimedean. 
Then the following holds for each $f \in \SymRX$:
\begin{align}
\label{eq:cvg_eig}
\lim_{s \to \infty} L_s(f,S) = \lim_{s \to \infty}  \lambda_s (f,S) = \lambda_{\min} (f, S) \,.
\end{align}
\end{corollary}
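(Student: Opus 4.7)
The plan is to combine the dense archimedean Positivstellensatz of Helton--McCullough (Theorem~\ref{th:densePsatz}) with a standard weak-duality sandwich. The key observation is that the SDPs \eqref{eq:constr_eigmin_dual}--\eqref{eq:constr_eigmin_primal} are the \emph{dense} noncommutative Lasserre-type relaxations for $\lambda_{\min}(f,S)$, so this corollary is really a routine consequence of Theorem~\ref{th:densePsatz} once archimedeanity is in hand; the added quadratic constraints~\eqref{eq:additional} serve only to certify archimedeanity of $\cM(S)$.

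First I would establish $\lim_{s\to\infty}\lambda_s(f,S)\geq \lambda_{\min}(f,S)$ using a strict-positivity perturbation. Fix any $\lambda<\lambda_{\min}(f,S)$ and let $\varepsilon=\lambda_{\min}(f,S)-\lambda>0$. By the definition of $\lambda_{\min}(f,S)$ in \eqref{eq:constr_eigmin} as an infimum over all $\underline A\in D_S^\infty$ and unit vectors $\v$, the operator $(f-\lambda\cdot\mathbf{1})(\underline A)$ satisfies $\langle (f-\lambda)(\underline A)\v,\v\rangle\geq \varepsilon>0$ for every such $\underline A,\v$, i.e.~$f-\lambda\cdot\mathbf{1}\succ 0$ on $D_S^\infty$. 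Since $\cM(S)$ is archimedean, Theorem~\ref{th:densePsatz} yields $f-\lambda\cdot\mathbf{1}\in \cM(S)$, hence $f-\lambda\in \cM(S)_s$ for some $s\in\N$. Consequently $\lambda_s(f,S)\geq\lambda$ for all sufficiently large $s$, and letting $\lambda\uparrow\lambda_{\min}(f,S)$ gives the lower bound on the limit.

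Next I would show the opposite inequality via the standard primal feasibility construction. For any $\underline A\in D_S^\infty$ and unit vector $\v$, the functional $L(g):=\langle g(\underline A)\v,\v\rangle$ is unital, satisfies $L(h^\star h)\geq 0$ and $L(h^\star g_j h)\geq 0$ for all admissible $h$, and therefore by Lemma~\ref{lemma:Hankel} is feasible for the primal SDP~\eqref{eq:constr_eigmin_primal} at every order $s\geq d$. Its objective value equals $\langle f(\underline A)\v,\v\rangle$, so $L_s(f,S)\leq \lambda_{\min}(f,S)$. Combining with the weak-duality inequality $\lambda_s(f,S)\leq L_s(f,S)$ (which is immediate: if $f-\lambda\in\cM(S)_s$ and $L$ is any dual-feasible functional then $L(f)-\lambda = L(f-\lambda)\geq 0$), one obtains the sandwich
\[
\lambda_s(f,S)\ \leq\ L_s(f,S)\ \leq\ \lambda_{\min}(f,S),
\]
which, together with the first step, forces both limits to equal $\lambda_{\min}(f,S)$.

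I do not expect a genuine obstacle here: the only delicate point is that Theorem~\ref{th:densePsatz} requires strict positivity $f\succ 0$ rather than $f\succeq 0$, which is precisely why the $\varepsilon$-shift is needed. Everything else is bookkeeping with the truncation degree $s$ and reuse of previously established facts (Lemma~\ref{lemma:Hankel} for the Hankel/localizing characterization of dual feasibility, and Theorem~\ref{th:densePsatz} for the archimedean representation).
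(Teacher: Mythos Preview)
Your proof is correct and follows precisely the standard route the paper has in mind: the paper does not spell out a proof of this corollary but refers to \cite[Corollary~4.11]{burgdorf16} and names Theorem~\ref{th:densePsatz} as the main ingredient, and indeed the sparse analogue (Corollary~\ref{th:sparse_constr_eig}) is proved in the paper by exactly the $\varepsilon$-shift plus weak-duality sandwich you describe. Nothing to add.
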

The main ingredient of the proof (see, e.g.,~\cite[Corollary~4.11]{burgdorf16}) is the nc analog of Putinar's Positivstellensatz, stated in Theorem~\ref{th:densePsatz}.\\
Let $S \cup \{f\} \subseteq \SymRX$  and let ${D_S}$ be as in~\eqref{eq:newDS} with the additional quadratic constraints~\eqref{eq:additional}.  
Let ${\cM(S)^{\sparse}}$ be as in~\eqref{eq:sparseMS} and let us define ${\cM(S)_s^{\sparse}}$ in the same way as the truncated quadratic module ${\cM(S)_s}$ in~\eqref{eq:MS2d}.
Now, let us state the sparse variant of the primal-dual hierarchy~\eqref{eq:constr_eigmin_dual}-\eqref{eq:constr_eigmin_primal} of lower bounds for $\lambda_{\min} (f, S)$. 

For all $s \geq d$, the sparse variant of SDP~\eqref{eq:constr_eigmin_primal} is
\begin{equation}
\label{eq:sparse_constr_eigmin_primal}
\begin{aligned}
L_s^{\sparse}(f,S) := \inf\limits_{\newnc{L}} \quad  & \sum_{k=1}^p \langle {\newnc{\M_s}(L,I_k)}, G_{f_k} \rangle  \\	
\text{s.t.} 
\quad & \newnc{L(1) = 1} \,, \\
\quad & {\newnc{\M_s}(L,I_k)} \succeq 0 \,, \quad k=1,\dots,p \,, \\ 
\quad &  {\newnc{\M_{s-d_j}}(g_jL,I_k)} \succeq 0 \,,  \quad j=1,\dots,m \,, \quad k=1,\dots,p \,, \\ 
\quad & \newnc{L : \RXI{1}_{2 d} + \dots + \RXI{p}_{2 d} \to \R \,
\text{ linear}}\,,
\end{aligned}
\end{equation}
whose dual is the sparse variant of SDP~\eqref{eq:constr_eigmin_dual}:
\begin{equation}
\label{eq:sparse_constr_eigmin_dual}
\begin{aligned}
\lambda_s^{\sparse} (f,S) := \sup\limits_{\lambda} \quad  & \lambda  \\	
\text{s.t.} 
\quad & f - \lambda \in {\cM(S)_s} \,.
\end{aligned}
\end{equation}
Recall that an $\varepsilon$-neighborhood of 0 is the set $\mathcal{N}_{\varepsilon}$ defined for a given $\varepsilon > 0$ by:
\[
\mathcal{N}_{\varepsilon} := \bigcup_{k \in \N} \left\lbrace \underline{A} := (A_1,\dots,A_n) \in \Sbb_k^n : \varepsilon^2 - \sum_{i=1}^n A_i^2 \succeq 0  \right\rbrace \,.
\]
\begin{lemma}
\label{lemma:epsneighborhood}
If $h \in \RX$ vanishes on an $\varepsilon$-neighborhood of 0, then $h = 0$.
\end{lemma}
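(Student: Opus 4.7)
The plan is first to reduce, by scaling, to showing $h$ vanishes on every tuple of symmetric matrices, and then to invoke the standard fact that a nonzero element of $\RX$ cannot be a polynomial identity on symmetric matrix tuples of all sizes. Fix an arbitrary $\underline{A}=(A_1,\dots,A_n)\in\Sbb_k^n$ with $k\in\N$, and set $M:=\big\|\sum_{i=1}^n A_i^2\big\|^{1/2}$. For every scalar $t\in\R$ with $|t|<\varepsilon/M$ one has
\[
\varepsilon^2-\sum_{i=1}^n (tA_i)^2 \;=\; \varepsilon^2 - t^2\sum_{i=1}^n A_i^2 \;\succeq\; 0,
\]
so the scaled tuple $t\underline{A}$ lies in $\mathcal{N}_\varepsilon$. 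By hypothesis $h(t\underline{A})=0$ for all such $t$.

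Next, I would decompose $h=h_0+h_1+\cdots+h_D$ into its homogeneous components $h_d$ of degree $d$. Since every word $w$ of length $d$ satisfies $w(t\underline{A})=t^d w(\underline{A})$, the identity $h(t\underline{A})=0$ becomes the matrix-valued polynomial identity
\[
\sum_{d=0}^{D} t^d\, h_d(\underline{A}) \;=\; 0 \qquad\text{for all } t\in(-\varepsilon/M,\,\varepsilon/M).
\]
Reading this entry by entry produces ordinary real polynomials in $t$ that vanish on an open interval, hence vanish identically. Comparing coefficients of $t^d$ yields $h_d(\underline{A})=0$ for every $d$. Because $\underline{A}$ was an arbitrary tuple of symmetric matrices of arbitrary size, each $h_d$ vanishes on the whole union $\bigcup_{k\in\N}\Sbb_k^n$.

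It remains to show that a polynomial $p\in\RX$ vanishing on every tuple of symmetric matrices of every size must be zero; applied to each $h_d$, this yields $h=0$. This last step is the main obstacle. I would argue it by a generic-matrix argument: choose $N$ large enough relative to $D$ and consider symmetric matrices $\underline{A}$ whose entries are independent indeterminates (subject to the symmetry constraint), so that the image of the evaluation map $\RX_{\le D}\to M_N(\R[\text{entries}])$ sends the words of length $\le D$ to linearly independent matrices over the polynomial ring. A nonzero $p$ would then produce a nonzero entry polynomial, which must fail to vanish at some real specialization, contradicting our hypothesis. Equivalently, one may appeal to the residual finite-dimensionality of the universal $C^\star$-algebra generated by $n$ self-adjoint contractions, which guarantees a separating family of finite-dimensional $\star$-representations by symmetric matrices.
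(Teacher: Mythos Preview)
Your argument is correct and is the standard proof of this fact: scale into $\mathcal N_\varepsilon$, separate homogeneous components via the polynomial identity in $t$, and then use that no nonzero element of $\RX$ is a polynomial identity for symmetric matrix tuples of all sizes. The paper does not give its own proof here but simply defers to \cite[Lemma~1.35]{burgdorf16}, where precisely this scaling-plus-homogeneity reduction is used; so your route coincides with the cited one.

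Two minor polish points. First, treat the degenerate case $M=0$ (i.e.\ $\underline{A}=0$) separately, since then $\varepsilon/M$ is undefined; of course $0\in\mathcal N_\varepsilon$ and the conclusion is trivial there. Second, your generic-symmetric-matrix argument for the last step is fine and can be made completely explicit: for $N\geq D+1$ and generic symmetric $A_i\in\Sbb_N$, the $(1,d{+}1)$ entry of $w(\underline A)$ for a word $w=X_{i_1}\cdots X_{i_d}$ contains the monomial $\prod_{k=1}^d (A_{i_k})_{k,k+1}$, and these monomials are pairwise distinct as $w$ ranges over words of length $\le D$, giving linear independence of $\{w(\underline A)\}_{|w|\le D}$. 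Your alternative via residual finite-dimensionality of the universal $C^\star$-algebra on $n$ self-adjoint contractions also works but is heavier machinery than needed.
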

\begin{proof}
See~\cite[Lemma~1.35]{burgdorf16}.
\end{proof}
\begin{proposition}
\label{prop:constr_eigmin_slater}
Let $S \cup \{f\} \subseteq \SymRX$,  assume that ${D_S}$ contains an $\varepsilon$-neighborhood of 0 and that ${D_S}$ is as in~\eqref{eq:newDS} with the additional quadratic constraints~\eqref{eq:additional}. 
Then SDP~\eqref{eq:sparse_constr_eigmin_primal} admits strictly feasible solutions. 
\end{proposition}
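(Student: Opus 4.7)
The plan is to exhibit an explicit strictly feasible $L$ for~\eqref{eq:sparse_constr_eigmin_primal} obtained by averaging the normalized trace of $g(\underline A)$ over a smooth probability measure whose support lies inside $\mathcal{N}_\varepsilon \subseteq D_S$. Concretely, fix an integer $\nu > 2s$, let $\mathcal{B} := \{\underline A \in \Sbb_\nu^n : \sum_{i=1}^n A_i^2 \prec (\varepsilon/2)^2 \I_\nu\}$, let $\mu$ denote normalized Lebesgue measure on $\mathcal{B}$, and set
\[
L(g) := \int_{\mathcal{B}} \Trace\bigl(g(\underline A)\bigr)\, d\mu(\underline A), \qquad g \in \RX_{2s}.
\]
Then $L(1)=1$ since $\Trace$ is the normalized trace, $L$ is symmetric, and feasibility follows by integrating the pointwise inequality $\Trace(h^\star g_j h(\underline A)) = \nu^{-1}\|g_j(\underline A)^{1/2} h(\underline A)\|_F^2 \geq 0$, valid on $\mathcal{B} \subseteq D_S$ (the case $g_j=\mathbf{1}$ covers the moment blocks).

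For strict positivity of each moment submatrix $\M_s(L,I_k)$, take $0 \neq h \in \RXI{k}_s$. If $L(h^\star h)=0$, then $h$ vanishes on the open set $\mathcal{B}$; since $\nu$ exceeds $\deg h$, a nonzero nc polynomial cannot vanish on any open subset of $\Sbb_\nu^n$ (by analyticity combined with the absence of polynomial identities of bounded degree at large matrix size, a strengthening of Lemma~\ref{lemma:epsneighborhood}), contradicting $h \neq 0$.

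The localizing blocks $\M_{s-d_j}(g_j L,I_k)$ are the delicate step. Suppose $L(h^\star g_j h)=0$ for some nonzero $h \in \RXI{k}_{s-d_j}$; then $g_j(\underline A)^{1/2} h(\underline A)=0$ for $\mu$-a.e.\ $\underline A \in \mathcal B$, hence $g_j(\underline A) h(\underline A) = 0$ almost everywhere. Since $g_j \not\equiv 0$ as an nc polynomial and $\nu$ is sufficiently large, the entries of $g_j(\underline A)$ are real polynomials in the entries of $\underline A$ with $\det g_j(\underline A) \not\equiv 0$; thus $g_j(\underline A)$ is invertible on a Zariski-open dense subset of $\mathcal B$, on which we deduce $h(\underline A)=0$. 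By continuity $h$ vanishes on all of $\mathcal B$, forcing $h=0$ as in the previous step.

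The main technical hurdle is the generic invertibility of $g_j(\underline A)$ used in the last step. For the archimedean constraints $N \cdot \mathbf{1} - \sum_{i \in I_k} X_i^2$ this is trivial since they equal $N\I_\nu \succ 0$ at $\underline A=0$ and hence stay positive definite on a neighborhood of $0$. For the remaining $g_j$, it reduces to the standard fact that a PSD-valued nc matrix polynomial that is not identically singular has nonzero determinant on a Zariski-open subset of $\Sbb_\nu^n$ for $\nu$ sufficiently large; this follows from the same polynomial identity/analyticity considerations invoked for the moment blocks.
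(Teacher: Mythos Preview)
Your construction of $L$ by averaging normalized traces over matrix tuples in $D_S$ is essentially the paper's approach; the paper uses a weighted countable sum over rational matrix tuples in $D_S^N$ (with $N>s$) rather than Lebesgue integration over $\mathcal B$, but the two devices are interchangeable and the argument for the moment blocks is identical.

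For the localizing blocks, however, your detour through generic invertibility of $g_j(\underline A)$ is both unnecessary and not adequately justified. Once you have $g_j(\underline A)^{1/2}h(\underline A)=0$ (equivalently $(h^\star g_j h)(\underline A)=0$) for all $\underline A$ in the open set $\mathcal B\subseteq\Sbb_\nu^n$ with $\nu>2s$, observe that $h^\star g_j h\in\RX_{2s}$ is itself an nc polynomial vanishing on an $\varepsilon$-neighborhood of $0$; Lemma~\ref{lemma:epsneighborhood} gives $h^\star g_j h=0$ in $\RX$, and since the free algebra $\RX$ has no zero divisors and $g_j\neq0$, it follows that $h=0$. This is the route the paper's ``in a similar way'' points to, and it avoids any invertibility statement.

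By contrast, your claim that $\det g_j(\underline A)\not\equiv0$ on $\Sbb_\nu^n$ for large $\nu$ does \emph{not} follow ``from the same polynomial identity/analyticity considerations'' used for the moment blocks: Lemma~\ref{lemma:epsneighborhood} only gives that $g_j(\underline A)$ is not identically the zero matrix, which is strictly weaker than generic invertibility. The latter amounts to the (true but nontrivial) fact that the algebra of generic symmetric $\nu\times\nu$ matrices is a domain whose central localization is a division ring; invoking this here is overkill when the elementary domain property of $\RX$ already closes the argument.
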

\begin{proof}
This proof being almost the same as the one of~\cite[Proposition~4.9]{burgdorf16} is presented for the sake of completeness.
By Lemma~\ref{lemma:Hankel}, it is enough to build a linear map $L : \SymRX_{2 s} \to \R $ such that for all $k=1,\dots,p$ one has: 
\begin{itemize}
\item $L(h^\star h) > 0$, for all nonzero $h \in \RXI{k}_s$;
\item for all $j \in J_k$, one has $L(h^\star g_j h) > 0$, for all nonzero $h \in  \RXI{k}_{s - \lceil \deg g_j / 2 \rceil }$.
\end{itemize}

Let us pick $N > s$ and let $\mathcal{U}$ stands for the set of all $N \times N$ matrices from ${D_S}$ with rational entries:
\[
\mathcal{U} := \{ { \underline{A}^{(r)} := (A_1^{(r)},\dots, A_n^{(r)}} ) : r \in \N \,,  \underline{A}^{(r)} \in {D_S^N} \}
\]
Note that this set $\mathcal{U}$ contains a dense subset of 
$\mathcal{N}_{\varepsilon}$. 
Let us associate to $\underline{A} \in \mathcal{U}$ the linear map $L_{\underline{A}} : \SymRX_{2 d} \to \R$ defined by $L_{\underline{A}}(h) := \Trace (h(\underline{A}))$.
From this, we define $L$ as follows:
\[ 
L := \sum_{r=1}^\infty 2^{-r} \frac{L_{\underline{A}^{(r)}}}{\| L_{\underline{A}^{(r)}} \|} \,.
\]
Now let us fix $k \in \{1,\dots,p \}$.
Obviously, one has $L(h^\star h) \geq 0$, for all nonzero $h \in \RXI{k}_d$. 
Let us suppose that $L(h^\star h) = 0$ for some $h \in \RXI{k}_d$.
Then, one has $L_{\underline{A}^{(r)}} (h^\star h) =  0 = \Trace (  h^\star(\underline{A}^{(r)})  h(\underline{A}^{(r)}) )$, for all $r \in \N$.
This implies that for all $r \in \N$, one has $h^\star(\underline{A}^{(r)})  h(\underline{A}^{(r)}) = 0$, which in turn yields $h(\underline{A}^{(r)}) = 0$.
Since $\mathcal{U}$ contains a dense subset of $\mathcal{N}_{\varepsilon}$,  this implies that $h$ vanishes on a $\varepsilon$-neighborhood of 0. 
As a consequence of Lemma~\ref{lemma:epsneighborhood}, one has $h = 0$. 

In a similar way, we prove that if $L(h^\star g_j h) = 0$ for some $h \in  \RXI{k}_{s - \lceil \deg g_j / 2 \rceil }$, then one necessarily has $h = 0$.
\end{proof}
\begin{corollary}
\label{th:sparse_constr_eig}
Let $S \cup \{f\} \subseteq \SymRX$, assume that  ${D_S}$ is as in~\eqref{eq:newDS} with the additional quadratic constraints~\eqref{eq:additional}. 
Let Assumption~\ref{hyp:sparsityRIP} hold. 
Then, one has
\begin{align}
\label{eq:sparse_cvg_eig}
\lim_{s \to \infty} L_s^{\sparse}(f,S) = \lim_{s \to \infty}  \lambda_s^{\sparse} (f,S) = \lambda_{\min} (f, S) \,.
\end{align}
\end{corollary}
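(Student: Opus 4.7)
The plan is to sandwich $\lambda_s^{\sparse}(f,S)$ and $L_s^{\sparse}(f,S)$ between $\lambda_{\min}(f,S) - \varepsilon$ and $\lambda_{\min}(f,S)$ for arbitrary $\varepsilon > 0$ and sufficiently large $s$, then pass to the limit. This parallels the argument behind the dense Corollary~\ref{th:constr_eig}, with the sparse Positivstellensatz (Theorem~\ref{th:sparsePsatz}) playing the role that Theorem~\ref{th:densePsatz} plays there.

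For the universal upper bounds $\lambda_s^{\sparse}(f,S) \leq L_s^{\sparse}(f,S) \leq \lambda_{\min}(f,S)$, the left inequality is just weak duality between the SDPs \eqref{eq:sparse_constr_eigmin_primal} and \eqref{eq:sparse_constr_eigmin_dual}. For the right inequality, I would note that any $(\underline A, v) \in D_S^\infty \times \{v : \|v\| = 1\}$ induces a primal-feasible vector state $L_{\underline A, v}(g) := \langle g(\underline A) v, v \rangle$: it is unital, each restricted Hankel $\newnc{\M_s}(L_{\underline A, v}, I_k)$ is positive semidefinite since $L_{\underline A, v}(h^\star h) = \|h(\underline A) v\|^2 \geq 0$, and each localizing $\newnc{\M_{s-d_j}}(g_j L_{\underline A, v}, I_k)$ with $j \in J_k$ is positive semidefinite since $g_j(\underline A) \succeq 0$. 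Its objective value is $\sum_k L_{\underline A, v}(f_k) = \langle f(\underline A) v, v \rangle$, so infimizing over $(\underline A, v)$ yields $L_s^{\sparse}(f,S) \leq \lambda_{\min}(f,S)$.

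For the matching lower bound I would invoke Theorem~\ref{th:sparsePsatz}. Assumption~\ref{hyp:sparsityRIP}, together with the explicit inclusion of the bounding constraints \eqref{eq:additional}, places each $s_{m+k} \in S \cap \SymRXI{k}$, so Assumption~\ref{hyp:sparsity} holds and Theorem~\ref{th:sparsePsatz} is applicable. For any $\varepsilon > 0$, the polynomial $f - (\lambda_{\min}(f,S) - \varepsilon)$ is strictly positive on $D_S^\infty$, so by Theorem~\ref{th:sparsePsatz} it lies in $\cM(S)^{\sparse} = \bigcup_{s} \cM(S)_s^{\sparse}$; hence it lies in $\cM(S)_{s_\varepsilon}^{\sparse}$ for some $s_\varepsilon$, and by monotonicity in $s$ the value $\lambda_{\min}(f,S) - \varepsilon$ is feasible for \eqref{eq:sparse_constr_eigmin_dual} at every $s \geq s_\varepsilon$. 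Chaining the bounds gives
\[
\lambda_{\min}(f,S) - \varepsilon \;\leq\; \lambda_s^{\sparse}(f,S) \;\leq\; L_s^{\sparse}(f,S) \;\leq\; \lambda_{\min}(f,S) \qquad (s \geq s_\varepsilon),
\]
and \eqref{eq:sparse_cvg_eig} follows by letting $s \to \infty$ and then $\varepsilon \to 0$.

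The whole difficulty has already been absorbed into Theorem~\ref{th:sparsePsatz}, so I expect no new obstacle beyond verifying that the setup implies Assumption~\ref{hyp:sparsity}; the double-counting observation $pN \cdot \mathbf{1} - \sum_i X_i^2 = \bigl(pN \cdot \mathbf{1} - \sum_k \sum_{j \in I_k} X_j^2\bigr) + \sum_j (c_j - 1) X_j^2 \in \cM(S)^{\sparse}$, where $c_j := |\{k : j \in I_k\}| \geq 1$, is the only routine piece of bookkeeping.
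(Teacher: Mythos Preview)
Your proposal is correct and follows essentially the same route as the paper's proof: sandwich via weak duality on one side and the sparse Positivstellensatz (Theorem~\ref{th:sparsePsatz}) on the other, then let $\varepsilon\to 0$. You supply more detail than the paper does---in particular, the explicit vector-state argument for $L_s^{\sparse}(f,S)\leq\lambda_{\min}(f,S)$ and the archimedeanity bookkeeping---but the structure and the key ingredient are identical.
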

\begin{proof}
The proof is similar to the one in the dense case.
Let us take $\lambda := \lambda_{\min} (f, S) - \varepsilon$.
Then, one has $f - \lambda \succ 0$ on ${D_S^{\infty}}$, so Theorem~\ref{th:sparsePsatz} implies that $f - \lambda \in {\cM(S)^{\sparse}}$. 
Hence, there exists \newnc{$s$} such that $f - \lambda \in {\cM(S)_s^{\sparse}}$, yielding a feasible solution for SDP~\eqref{eq:sparse_constr_eigmin_dual}, \newnc{so $\lambda_{\min} (f, S) - \varepsilon \leq \lambda_s^{\sparse} (f,S)$. 
By weak duality between SDP~\eqref{eq:sparse_constr_eigmin_primal} and SDP~\eqref{eq:sparse_constr_eigmin_dual}, one has  $\lambda_s^{\sparse} (f,S) \leq L_s^{\sparse}(f,S) $.
Therefore, one obtains $\lambda_{\min} (f, S) - \varepsilon \leq \lambda_s^{\sparse} (f,S) \leq L_s^{\sparse}(f,S) \leq \lambda_{\min} (f, S)$, yielding the desired result. }
\end{proof}
As for the unconstrained case, there is no sparse variant of the ``perfect'' Positivstellensatz stated in~\cite[\S4.4]{burgdorf16} or \cite{hkmConvex}, for constrained eigenvalue optimization over convex nc semialgebraic sets, such as those associated either to the sparse nc ball $\Bbb^{\sparse} := \{1 - \sum_{i \in I_1} X_i^2, \dots, 1 - \sum_{i \in I_p} X_i^2 \}$ or the nc polydisc $\Dbb := \{1 - X_1^2,\dots,1-X_n^2 \}$.
Namely, for an nc polynomial $f$ of degree $2 d + 1$,
 computing only SDP~\eqref{eq:sparse_eigmin_primal} with optimal value $\lambda^{\sparse}_{\min,d+1}(f, S)$ when $S = \Bbb^{\sparse}$ or $S = \Dbb^{\sparse}$ 
 does not suffice
 to obtain the value of $\lambda_{\min}(f,S)$.
This is explained in Example~\ref{ex:nosparseEigBall} below, which implies that there is no sparse variant of~\cite[Corollary~4.18]{burgdorf16} when $S = \Bbb^{\sparse}$.
\begin{example}\rm
\label{ex:nosparseEigBall}
Let us consider a randomly generated cubic polynomial $f = f_1 + f_2$ with
\begin{align*}
f_1 = \ & 4 - X_1 + 3X_2 - 3X_3 - 3X_1^2 - 7X_1X_2 +   6X_1X_3 - X_2X_1 -5X_3X_1 + 5X_3X_2 \\
& -  5X_1^3 - 3X_1^2 X_3 + 4X_1X_2X_1 -  6X_1X_2X_3 + 7X_1X_3X_1 
 + 2X_1X_3X_2 -   X_1X_3^2 \\
 & - X_2X_1^2 + 3X_2X_1X_2 -   X_2X_1X_3 - 2 X_2^3 - 5 X_2^2 X_3 
 -   4X_2X_3^2 - 5X_3X_1^2 \\
 & + 7X_3X_1X_2 +   6X_3X_2X_1 - 4X_3X_2X_2 - X_3^2 X_1 -   2X_3^2 X_2 + 7X_3^3 \,, \\
f_2  = \ & -1 + 6X_2 + 5X_3 + 3X_4 - 5X_2^2 + 2X_2X_3 +   4X_2X_4 - 4X_3X_2 + X_3^2 - X_3X_4 \\
& +   X_4X_2 - X_4X_3 + 2X_4^2 - 7X_2^3 +   4X_2X_3^2 + 5X_2X_3X_4 - 7X_2X_4X_3 -   7X_2X_4^2 \\
& + X_3X_2^2 + 6X_3X_2X_3 -   6X_3X_2X_4 - 3X_3^2 X_2 - 7X_3^2X_4 +   6X_3X_4X_2 \\
& - 3X_3X_4X_3 - 7X_3X_4^2 +   3X_4X_2^2 - 7X_4X_2X_3 - X_4X_2X_4 -   5X_4X_3^2  \\
& + 7X_4X_3X_4 + 6X_4^2 X_2 -   4 X_4^3 \,,
\end{align*} 
and the nc polyball $S = \Bbb^{\sparse} = \{1-X_1^2-X_2^2-X_3^2, 1-X_2^2-X_3^2 - X_4^2 \}$ corresponding to $I_1 = \{1,2,3\}$ and $I_2 = \{2,3,4\}$.
Then, one has $
\lambda_2^{\sparse} (f, S ) \simeq 
-27.536 < \lambda_3^{\sparse} (f,S) \simeq -27.467
\simeq \lambda_{\min,2} (f,S) = \lambda_{\min} (f,S)$.
\end{example}

\subsection{Extracting Optimizers}
\label{sec:gns_eig}
Here, we explain how to extract a pair 
of optimizers 
$(\underline{A},\v)$ for the eigenvalue optimization problems 
when the flatness and irreducibility conditions of Theorem~\ref{th:sparse_flat} hold. 
We apply the $\sparsegns$ procedure described in Algorithm~\ref{algorithm:sparse_gns} on the optimal solution of SDP~\eqref{eq:sparse_eigmin_primal} in the unconstrained case or SDP~\eqref{eq:sparse_constr_eigmin_primal} in the constrained case.
In the unconstrained case, we have the following sparse variant of~\cite[Proposition~4.4]{burgdorf16}.
\begin{proposition}
\label{prop:sparse_eig_flat}
Given $f$ as in Theorem~\ref{th:sparse_eig_nogap}, let us assume that SDP~\eqref{eq:sparse_eigmin_primal} yields an optimal  solution ${\newnc{\M_{d+1}}(L)}$ associated to $L^{\sparse}_{\sohs,d+1}(f)$.
If the linear functional $L$ underlying ${\newnc{\M_{d+1}}(L)}$ satisfies the  flatness (H1) and irreducibility (H2) conditions stated in Theorem~\ref{th:sparse_flat}, then one has 
\[ \lambda_{\min} (f) = L^{\sparse}_{\sohs,d+1}(f) = \sum_{k=1}^p \langle {\newnc{\M_{d+1}}(L,I_k)}, G_{f_k} \rangle 
\,. \]
\end{proposition}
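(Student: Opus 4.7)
The plan is a standard extraction argument in the spirit of the dense case~\cite[Proposition~4.4]{burgdorf16}: I would use the flatness and irreducibility hypotheses to turn the optimal moment data into an honest eigenvalue evaluation of $f$, and then sandwich $L^{\sparse}_{\sohs,d+1}(f)$ between two copies of $\langle f(\underline{A})\v,\v\rangle$.

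First I would observe that SDP~\eqref{eq:sparse_eigmin_primal} at level $d+1$ is a relaxation of the eigenvalue problem~\eqref{eq:eigmin}: for every tuple $\underline{A}\in\Sbb^n$ and unit vector $\v$, the functional $g\mapsto\langle g(\underline{A})\v,\v\rangle$ is feasible, with objective value $\langle f(\underline{A})\v,\v\rangle$. Combined with Theorem~\ref{th:sparse_eig_nogap}, which rules out a duality gap in the unconstrained sparse setting, this yields $L^{\sparse}_{\sohs,d+1}(f)\le\lambda_{\min}(f)$. The task is therefore to build a single pair $(\underline{A},\v)$ realizing $L^{\sparse}_{\sohs,d+1}(f)$ as such an eigenvalue evaluation.

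Next I would invoke Theorem~\ref{th:sparse_flat} applied to the optimal $L$ (in the unconstrained setting, with $S=\emptyset$, so there are no localizing constraints and the flatness occurs between levels $d+1$ and $d$). The proposition's assumptions (H1) and (H2) are exactly what the theorem requires, and it supplies a tuple $\underline{A}\in\Sbb^{r}$ with $r=r_1\cdots r_p$ and a unit vector $\v$ satisfying
\[
L(g)\ =\ \langle g(\underline{A})\v,\v\rangle\qquad\text{for all }\ g\in\sum_{j=1}^p \RXI{j}_{2d}.
\]
Since $f=f_1+\dots+f_p$ belongs to $\sum_j\RXI{j}_{2d}$ by hypothesis, specializing $g=f$ gives $L(f)=\langle f(\underline{A})\v,\v\rangle\ge\lambda_{\min}(f)$.

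Finally, I would unravel the SDP objective using Proposition~\ref{prop:ncGram}: for each $k$, the Gram identity $f_k=\W_d(I_k)^\star G_{f_k}\W_d(I_k)$ combined with linearity of $L$ gives $L(f_k)=\langle\M_{d+1}(L,I_k),G_{f_k}\rangle$, where $G_{f_k}$ is viewed as a matrix of the appropriate size by padding with zeros. Summing over $k$ yields $L(f)=L^{\sparse}_{\sohs,d+1}(f)$. Chaining the (in)equalities,
\[
\lambda_{\min}(f)\ \le\ L(f)\ =\ L^{\sparse}_{\sohs,d+1}(f)\ \le\ \lambda_{\min}(f),
\]
so all three quantities coincide, which is the desired conclusion. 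The main technical hurdle will be degree bookkeeping when invoking Theorem~\ref{th:sparse_flat}: aligning its parameters $d$ and $\delta$ with the present unconstrained setting at level $d+1$, and handling pairs $(j,k)$ with $I_j\cap I_k=\emptyset$, where the amalgamation reduces trivially to one over $\R$ exactly as in the $p=2$ case of the proof of Theorem~\ref{th:sparsePsatz}.
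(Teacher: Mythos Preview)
Your proposal is correct and follows essentially the same route as the paper: invoke Theorem~\ref{th:sparse_flat} on the optimal $L$ to extract a pair $(\underline{A},\v)$ with $L(f)=\langle f(\underline{A})\v,\v\rangle$, then sandwich $L^{\sparse}_{\sohs,d+1}(f)$ between $\lambda_{\min}(f)$ using the relaxation inequality and the eigenvalue lower bound. Your version is in fact more carefully written than the paper's terse argument, and you rightly flag the degree bookkeeping needed to align the parameters of Theorem~\ref{th:sparse_flat} with the unconstrained level-$(d+1)$ setting.
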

\begin{proof}
The first equality comes from Theorem~\ref{th:sparse_eig_nogap}.
Let us assume that each moment matrix satisfies the assumptions of Theorem~\ref{th:sparse_flat}. 
Then, we obtain a tuple $\underline{A}$ of symmetric matrices and a unit vector $\v$ such that $L(f) = \langle f(\underline{A}) \v \mid \v \rangle$. 
Since one has $L(f) = \sum_{k=1}^p \langle \newnc{\M_{d+1}}(L,I_k), G_{f_k} \rangle = \lambda_{\min} (f)$, the desired result holds.
\end{proof}
We can extract optimizers for the unconstrained minimal eigenvalue problem~\eqref{eq:eigmin} thanks to the following algorithm.

\begin{algorithm}
$\sparseeiggns$
\label{algorithm:sparse_eig}
\begin{algorithmic}[1]
\Require $f \in \SymRX_{2d}$ satisfying Assumption~\ref{hyp:sparsityRIP}.
\State Compute $L^{\sparse}_{\sohs,d+1}(f)$ by solving SDP~\eqref{eq:sparse_eigmin_primal} 
\If{SDP~\eqref{eq:sparse_eigmin_primal}  is unbounded or its optimum is not attained}
\State Stop
\EndIf 
\State Let $\newnc{\M_{d+1}}(L)$ be an optimizer of SDP~\eqref{eq:sparse_eigmin_primal}.  Compute $\underline{A},\v := \sparsegns \,(\newnc{\M_{d+1}}(L))$.
\Ensure $\underline{A}$ and $\v$.
\end{algorithmic}
\end{algorithm}
In the constrained case, the next result is the sparse variant of~\cite[Theorem~4.12]{burgdorf16} and is a direct corollary of Theorem~\ref{th:sparse_flat}.
\begin{corollary}
\label{th:sparse_cons_eig}
Let $S \cup \{f\} \subseteq \SymRX$, assume that  ${D_S}$ is as in~\eqref{eq:newDS} with the additional quadratic constraints~\eqref{eq:additional}. 
Suppose Assumptions~\ref{hyp:sparsityRIP}(i)-(ii) hold. 
Let ${\newnc{\M_s}(L)}$ be an optimal solution of SDP~\eqref{eq:sparse_constr_eigmin_primal} with value $L_s(f,S)$, for $s \geq d + \delta$, such that $L$ satisfies the assumptions of Theorem~\ref{th:sparse_flat}.
Then, there exist $r \in \N$, $\underline{A} \in {D_S^r}$ and a unit vector $\v$ such that
\[
\lambda_{\min}(f,S) = 
\langle f(\underline{A}) \v , \v \rangle = 
L_s(f,S) \,.
\] 
\end{corollary}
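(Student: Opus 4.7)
The plan is to deduce this as a near-immediate corollary of Theorem~\ref{th:sparse_flat} applied to the dual optimal $L$. Since $s \geq d + \delta$ and $L$ is feasible for SDP~\eqref{eq:sparse_constr_eigmin_primal}, the linear functional $L$ is unital and satisfies $L(\cM(S)_{s-\delta}^{\sparse}) \subseteq \R^{\geq 0}$; together with the standing assumption that $L$ fulfills the flatness hypothesis (H1) and the irreducibility hypothesis (H2), Theorem~\ref{th:sparse_flat} directly produces an integer $r = r_1 \cdots r_p$, a tuple $\underline{A} \in D_S^r$, and a unit vector $\v$ such that
\[
L(g) = \langle g(\underline{A}) \v, \v \rangle
\]
for every $g \in \sum_{j=1}^p \RXI{j}_{2d}$.

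Next I would specialize this identity to $g = f$. Using Assumption~\ref{hyp:sparsityRIP}(ii) to write $f = f_1 + \dots + f_p$ with $f_k \in \RXI{k}$, and unwinding the definition of the Gram matrices $G_{f_k}$ via Proposition~\ref{prop:ncGram}, we obtain
\[
\langle f(\underline{A}) \v, \v \rangle = L(f) = \sum_{k=1}^p L(f_k) = \sum_{k=1}^p \langle \newnc{\M_s}(L,I_k), G_{f_k} \rangle = L_s^{\sparse}(f,S).
\]

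For the remaining equality $L_s^{\sparse}(f,S) = \lambda_{\min}(f,S)$, I would combine two inequalities. On the one hand, weak duality between SDP~\eqref{eq:sparse_constr_eigmin_primal} and SDP~\eqref{eq:sparse_constr_eigmin_dual} (or, equivalently, the fact that SDP~\eqref{eq:sparse_constr_eigmin_primal} is a relaxation of the constrained eigenvalue problem) gives $L_s^{\sparse}(f,S) \leq \lambda_{\min}(f,S)$. On the other hand, since $\underline{A} \in D_S^r \subseteq D_S^{\infty}$ and $\v$ is a unit vector, the definition~\eqref{eq:constr_eigmin} of $\lambda_{\min}(f,S)$ yields $\langle f(\underline{A}) \v, \v \rangle \geq \lambda_{\min}(f,S)$. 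Chaining these with the displayed identity gives equality throughout.

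There is no genuine obstacle here, since the heavy lifting has already been performed in Theorem~\ref{th:sparse_flat}; the only subtlety is bookkeeping: one must verify that $L$ restricted to $\sum_k \RXI{k}_{2d}$ really is the object to which Theorem~\ref{th:sparse_flat} applies (this is ensured by $s \geq d + \delta$ and the localizing constraints in~\eqref{eq:sparse_constr_eigmin_primal}), and that the extracted $\underline{A}$ lies in $D_S^r$ rather than merely in $D_S^{\infty}$—which is exactly the finite-dimensional conclusion of Theorem~\ref{th:sparse_flat}. The statement then follows without further appeal to convergence results such as Corollary~\ref{th:sparse_constr_eig}.
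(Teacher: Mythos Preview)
Your proposal is correct and follows exactly the route the paper intends: the paper itself gives no separate proof, merely stating that the result ``is a direct corollary of Theorem~\ref{th:sparse_flat},'' and your argument spells this out faithfully. One minor quibble: the inequality $L_s^{\sparse}(f,S) \leq \lambda_{\min}(f,S)$ comes from the relaxation argument you give (any $(\underline A,\v)$ with $\underline A\in D_S^\infty$, $\|\v\|=1$ induces a feasible $L$), not from weak duality between \eqref{eq:sparse_constr_eigmin_primal} and \eqref{eq:sparse_constr_eigmin_dual}, which only yields $\lambda_s^{\sparse}(f,S)\leq L_s^{\sparse}(f,S)$; but since you also supply the correct relaxation reasoning, the proof stands.
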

\begin{remark} \rm
As in the dense case~\cite[Algorithm~4.2]{burgdorf16}, one can provide a randomized algorithm to look for flat optimal solutions for the constrained eigenvalue problem~\eqref{eq:constr_eigmin}.
The underlying reason which motivates this randomized approach is  work by Nie, who derives in~\cite{NieRand14} a hierarchy of SDP programs, with a random objective function, that converges to a flat solution (under mild assumptions).
\end{remark}

\begin{example}\rm
Consider the sparse polynomial $f=f_1+f_2$ from Example~\ref{ex:nosparseEigBall}. The Hankel matrix ${\newnc{\M_3}(L)}$ obtained when computing
$\lambda_3^{\sparse}$ 
by solving \eqref{eq:sparse_constr_eigmin_primal} for $s=3$
satisfies the  flatness (H1) and irreducibility (H2) conditions of Theorem~\ref{th:sparse_flat}.
 We can thus apply the $\sparsegns$ algorithm yielding
 \begin{align*}
A_1&=\left[\begin{array}{rrrr}
   0.0059 &  0.0481 &   0.1638&    0.4570\\
    0.0481&  -0.2583 &   0.5629&   -0.2624\\
    0.1638&   0.5629  &  0.3265 &  -0.3734\\
    0.4570&  -0.2624   &-0.3734  & -0.2337
\end{array}\right] \\
A_2&=\left[\begin{array}{rrrr}
   -0.3502 &   0.0080&    0.1411&    0.0865\\
    0.0080  & -0.4053 &   0.2404 &  -0.1649\\
    0.1411   & 0.2404  & -0.0959  &  0.3652\\
    0.0865   &-0.1649   & 0.3652   & 0.4117
\end{array}\right] \\
A_3&=\left[\begin{array}{rrrr}
 -0.7669 &  -0.0074 &  -0.1313  & -0.0805\\
   -0.0074&   -0.4715&   -0.2238 &   0.1535\\
   -0.1313 &  -0.2238 &   0.0848  & -0.3400\\
   -0.0805  &  0.1535  & -0.3400   &-0.2126
\end{array}\right] \\
A_4&=\left[\begin{array}{rrrr}
    0.3302&   -0.1839&    0.1811&   -0.0404\\
   -0.1839 &  -0.1069 &   0.5114 &  -0.0570\\
    0.1811  &  0.5114  &  0.1311  & -0.3664\\
   -0.0404   &-0.0570   &-0.3664   & 0.4440
\end{array}\right] 
\end{align*}
 where
 \[
f(\underline A)
=\left[\begin{array}{rrrr}
   -10.3144 &   3.9233 &  -5.0836 &   -7.7828\\
    3.9233  &  1.8363  &  4.5078 &  -7.5905\\
   -5.0836   & 4.5078   &-19.5827  &  13.9157\\
   -7.7828   &-7.5905    &13.9157   & 8.3381
\end{array}\right] 
 \]
 has  minimal eigenvalue $-27.4665$ with unit eigenvector
 \[\v=\begin{bmatrix} 0.1546 &  -0.2507 &   0.8840  & -0.3631\end{bmatrix}^T.\]
 In this case all the ranks involved were equal to four. So
$A_2$ and $A_3$ were computed already from ${\newnc{\M_3}(L,I_1\cap I_2)}$, after
an appropriate basis change $A_1$ (and the same $A_2,A_3$) was obtained
from ${\newnc{\M_3}(L,I_1)}$, and finally $A_4$ was computed from ${\newnc{\M_3}(L,I_2)}$.
\end{example}
\section{Trace Optimization of Noncommutative Sparse  Polynomials}
\label{sec:trace}

The aim of this section is to provide SDP relaxations allowing one to under-approximate the smallest trace of an nc polynomial on a semialgebraic set.
In Section~\ref{sec:sparse_tracial_gns}, we provide a sparse tracial representation for tracial linear functionals.
In Section~\ref{sec:unconstr_trace}, we address the unconstrained trace minimization problem. 
As in Section~\ref{sec:unconstr_eig}, we compute a lower bound on the smallest trace via SDP.
The constrained case is handled in Section~\ref{sec:constr_trace}, where we derive a hierarchy of lower bounds converging to the minimal trace, assuming that the quadratic module is archimedean and that RIP holds (Assumption~\ref{hyp:sparsityRIP}).
Most proofs are similar to the ones of eigenvalue problems addressed in Section~\ref{sec:eig}, so our treatment here is more concise.

We start this section by introducing useful notations about commutators and trace zero polynomials.
Given $g,h \in \RX$, the nc polynomial $[g,h] := g h - h g$ is called a \emph{commutator}. 
Two nc polynomials $g, h \in \RX$ are called \emph{cyclically equivalent} ($g  \cyc h$) if $g - h$ is a sum of commutators.
Given $S \subseteq \SymRX$ with corresponding quadratic module $M_S$ and truncated variant ${\cM(S)_d}$, one defines $\Theta_{S,d} := \{g \in \SymRX_{2d} : g \cyc h \text{ for some } h \in {\cM(S)_d} \}$ and $\Theta_S := \bigcup_{d \in \N} \Theta_{S,d}$. 
In this case, $\Theta_S$ stands for the \emph{cyclic quadratic module} generated by $S$ and $\Theta_{S,d}$ stands for the \emph{truncated cyclic quadratic module} generated by $S$.\\
For $S \subseteq \SymRX$  and ${D_S}$ as in~\eqref{eq:newDS} with the additional quadratic constraints~\eqref{eq:additional}, let us define $\Theta^k_{S,d} := \{g \in \SymRX_{2 d} : g \cyc h \text{ for some } h \in M^k_{S,d} \}$, $\Theta^{k}_S := \bigcup_{d \in \N} \Theta^{k}_{S,d}$, for all $k = 1,\dots,p$ and the sum
\begin{align}
\label{eq:sparse_cyclic_module}
\Theta^{\sparse}_{S,d} := \Theta^1_{S,d} + \dots + \Theta^p_{S,d} \,,
\end{align}
as well as $\Theta^{\sparse}_S := \bigcup_{d \in \N} \Theta^{\sparse}_{S,d}$.
If $S$ is empty, we drop the $S$ in the above notations.\\
%
An nc polynomial $g \in \SymRX$ is called a \emph{trace zero} nc polynomial if $\Trace (g(\underline{A})) = 0$, for all $\underline{A} \in \Sbb^n$. 
This is equivalent to $g \cyc 0$ (see e.g.~\cite[Proposition~2.3]{klep2008sums}).\\
\newnc{For a given nc polynomial $g$, the cyclic degree of $g$, denoted by $\cdeg g$, is  
the smallest degree of a polynomial cyclically equivalent to $g$.}
\subsection{Sparse Tracial Representations}
\label{sec:sparse_tracial_gns}
~\\
The next theorem  allows one to obtain a sparse tracial representation of a tracial linear functional, under the same  flatness and irreducibility conditions stated in Theorem~\ref{th:sparse_flat}. 
This is a sparse variant of~\cite[Theorem~1.71]{burgdorf16}.
\begin{theorem}
\label{th:sparse_flat_tracial}
Let $S  \subseteq \SymRX_{2d}$,  and assume that the semialgebraic set ${D_S}$ is as in~\eqref{eq:newDS} with the additional quadratic constraints~\eqref{eq:additional}. 
Let Assumption~\ref{hyp:sparsityRIP}(i) hold.
Set $\delta := \max \{ \lceil \deg (g)/2 \rceil : g \in S\cup {\{1\}} \}$.
Let $L : \RX_{2 d + 2 \delta} \to \R$ be a unital tracial linear functional satisfying $L(\Theta^{\sparse}_{S, d}) \subseteq \R^{\geq 0}$.
Assume that the flatness (H1) and irreducibility (H2) conditions of Theorem~\ref{th:sparse_flat} hold.
Then there are finitely many $n$-tuples $\underline{A}^{(j)}$ of symmetric matrices in ${D_S^r}$ for some $r  \in \N$,  and positive scalars $\lambda_j$ with $\sum_j \lambda_j = 1$, such that for all $f \in \RXI{1}_{2 d} + \dots + \RXI{p}_{2 d}$, one has:
\begin{align}
\label{eq:tracial}
L(f) =  \sum_j \lambda_j \Trace {f ( \underline{A}^{(j)} ) } \,.
\end{align}
\end{theorem}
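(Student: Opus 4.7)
The strategy is to first apply Theorem~\ref{th:sparse_flat} to obtain a vector-state representation of $L$, and then exploit the tracial assumption together with the finite-dimensional Wedderburn structure of the algebra generated by the resulting tuple to split this representation into a convex combination of normalized traces.

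First, I would verify that the hypotheses of Theorem~\ref{th:sparse_flat} are met. Since $L$ is tracial, it vanishes on every commutator, so for each $h\in\cM(S)_d^{\sparse}$ and each $g\cyc h$ one has $L(g)=L(h)$. Hence the assumption $L(\Theta^{\sparse}_{S,d})\subseteq\R^{\geq 0}$ is equivalent to $L(\cM(S)_d^{\sparse})\subseteq\R^{\geq 0}$, which together with (H1) and (H2) allows us to invoke Theorem~\ref{th:sparse_flat} and obtain an integer $r$, a tuple $\underline{A}\in D_S^r$ and a unit vector $\v\in\R^r$ such that
\[
L(f)=\langle f(\underline{A})\v,\v\rangle\quad\text{for all }f\in\RXI{1}_{2d}+\dots+\RXI{p}_{2d}.
\]

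Next, let $\mathcal{A}\subseteq\Mbb_r(\R)$ be the unital $\star$-subalgebra generated by $\underline{A}$, and let $\varphi\colon\mathcal{A}\to\R$ be the state $\varphi(B):=\langle B\v,\v\rangle$. I would show that $\varphi$ is tracial: for any two nc monomials $u,w\in\langle\underline{X}\rangle_{d}$ the product $uw-wu$ is a sum of commutators and hence cyclically equivalent to $0$, and since $L$ is tracial, $\varphi(u(\underline{A})w(\underline{A}))=L(uw)=L(wu)=\varphi(w(\underline{A})u(\underline{A}))$. By linearity and density of polynomial evaluations in $\mathcal{A}$, $\varphi$ is a tracial state on $\mathcal{A}$.

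The crucial step is the Wedderburn decomposition of $\mathcal{A}$. As a finite-dimensional real $\star$-algebra, $\mathcal{A}$ decomposes as a direct sum $\bigoplus_j \mathcal{A}_j$ of simple $\star$-subalgebras, each isomorphic to a full matrix algebra over $\R$, $\C$ or $\mathbb{H}$. Every tracial state on such a direct sum is a convex combination of the normalized traces on its simple components, so there exist $\lambda_j\geq 0$ with $\sum_j\lambda_j=1$ (using $\varphi(\I)=L(\bfone)=1$) and projections $\pi_j\colon\mathcal{A}\to\mathcal{A}_j$ such that
\[
\varphi=\sum_j \lambda_j\,\Trace\circ\pi_j.
\]
Defining $\underline{A}^{(j)}:=\pi_j(\underline{A})$ (viewed as a tuple of symmetric matrices of appropriate size), the fact that each $\pi_j$ is a unital $\star$-homomorphism implies $s(\underline{A}^{(j)})=\pi_j(s(\underline{A}))\succeq 0$ for every $s\in S$, so $\underline{A}^{(j)}\in D_S$. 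Substituting $B=f(\underline{A})$ yields the announced identity~\eqref{eq:tracial}.

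The main obstacle is handling the real Wedderburn decomposition cleanly: the simple summands may be matrix algebras over $\C$ or $\mathbb{H}$, not just over $\R$, and one has to argue that the normalized (real) trace of $\pi_j(f(\underline{A}))$ agrees with the trace $\Trace f(\underline{A}^{(j)})$ of a genuine tuple of real symmetric matrices. This can be dealt with either by realifying each summand (passing from $\Mbb_m(\C)$ to a $\star$-subalgebra of $\Mbb_{2m}(\R)$, and from $\Mbb_m(\mathbb{H})$ to $\Mbb_{4m}(\R)$), which multiplies ranks but preserves positivity and trace up to a harmless constant absorbed into the $\lambda_j$, or by passing to the complexification $\mathcal{A}\otimes_{\R}\C$ throughout and checking that the resulting matrices can be chosen real symmetric since $\underline{A}$ was real symmetric to start with. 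Everything else is a routine consequence of Theorem~\ref{th:sparse_flat} and the classical structure theory for tracial states on finite-dimensional $C^\star$-algebras.
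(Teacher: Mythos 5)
Your overall strategy matches the paper's: invoke Theorem~\ref{th:sparse_flat} to obtain a finite-dimensional vector-state representation of $L$ and then split the state into a convex combination of normalized traces via the Wedderburn decomposition of the finite-dimensional $\star$-algebra, handling the real/complex/quaternionic summands. The paper's proof is very terse (it just says to proceed as in the dense tracial case of \cite[Theorem~1.71]{burgdorf16} using Wedderburn's theorem), so you are filling in the details that it leaves implicit; the reduction from $L(\Theta^{\sparse}_{S,d})\subseteq\R^{\geq 0}$ to $L(\cM(S)^{\sparse}_d)\subseteq\R^{\geq 0}$ via traciality is correct.

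However, the step where you argue that $\varphi$ is tracial on the algebra $\mathcal{A}\subseteq\Mbb_r(\R)$ generated by the tuple $\underline{A}$ has a genuine gap. You write $\varphi(u(\underline{A})w(\underline{A})) = L(uw) = L(wu) = \varphi(w(\underline{A})u(\underline{A}))$ for arbitrary monomials $u,w\in\langle\underline{X}\rangle_d$. But Theorem~\ref{th:sparse_flat} only provides the identity $L(f) = \langle f(\underline{A})\v,\v\rangle$ for $f\in\RXI{1}_{2d}+\dots+\RXI{p}_{2d}$. When $u$ and $w$ involve variables from different cliques $I_k$, the word $uw$ is not a sparse polynomial, so neither $\varphi(u(\underline{A})w(\underline{A})) = L(uw)$ nor $\varphi(w(\underline{A})u(\underline{A})) = L(wu)$ is justified; the representation has no content for such mixed words. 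The appeal to ``density of polynomial evaluations in $\mathcal{A}$'' runs into the same problem, since $\mathcal{A}$ is generated by all the $A_i$ and thus contains products straddling the cliques, exactly the elements on which the sparse representation says nothing. A correct argument needs to either work block-by-block with the subalgebras $\mathcal{A}(I_k)$ (using the traciality of each restriction $L|_{\RXI{k}}$ and combining via the amalgamation that produced $\underline{A}$), or analyze the specific tensor-product structure of $\underline{A}$ and $\v$ coming out of the sparse GNS construction to verify that the vector state is tracial on the relevant algebra. As written, the traciality of $\varphi$ on all of $\mathcal{A}$ is asserted but not established, and this is the crux of the argument.
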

\begin{proof}
As in Theorem~\ref{th:sparse_flat}, we perform the finite-dimensional GNS construction to obtain a tuple $\underline{A} \in {D_S^r}$, for some $r \in \N$, and unit vector $\v$ such that~\eqref{eq:tracial} holds.
To obtain the tracial representation, the proof is essentially the same as the one of~\cite[Theorem~1.71]{burgdorf16} and relies on the Wedderburn theorem, see e.g.~\cite[Chapter~1]{Lam13} for more details.
\end{proof}
\subsection{Unconstrained Trace Optimization with Sparsity}
\label{sec:unconstr_trace}
~\\
Given $f \in \SymRX$, the \emph{trace-minimum} of $f$ is obtained by solving the following  optimization problem
\begin{align}
\label{eq:tracemin}
\Trace_{\min}(f) := \inf \{\Trace f(\underline{A}) : \underline{A} \in \Sbb^n \} \,,
\end{align}
which is equivalent to
\begin{align}
\label{eq:tracemin2}
\Trace_{\min}(f) = \sup \{ a : \Trace (f - a) (\underline{A}) \geq 0 \,, \forall \underline{A} \in \Sbb^n \} \,,
\end{align}
If the cyclic degree of $f$ is odd, then $\Trace_{\min}(f) = -\infty$, thus let us assume that $2 d = \cdeg f$.
To approximate $\Trace_{\min}(f)$ from below, one considers the following relaxation:
\begin{align}
\label{eq:trace_dual}
\Trace_{\Theta}(f) = \sup \{ a : f - a \in \Theta_d \} \,,
\end{align}
whose dual is 
\begin{equation}
\label{eq:trace_primal}
\begin{aligned}
L_{\Theta}(f) := \inf\limits_{\newnc{L}} \quad  & \langle {\newnc{\M_d}(L)}, G_f \rangle  \\	
\text{s.t.} 
\quad & ({\newnc{\M_d}(L)})_{u,v} = ({\newnc{\M_d}(L)})_{w,z}  \,, \quad \text{for all } u^\star v \cyc w^\star z \,, \\
\quad & \newnc{L(1)} = 1 \,, \quad  {\newnc{\M_d}(L)} \succeq 0 \,, \\
\quad & \newnc{L : \RX_{2 d}  \to \R \,
\text{ linear}}\,,
\end{aligned}
\end{equation}
One has $ \Trace_{\Theta}(f) = L_{\Theta}(f) \leq \Trace_{\min}(f)$, where the inequality comes from~\cite[Lemma~5.2]{burgdorf16} and the equality results from the strong duality between SDP~\eqref{eq:trace_primal} and SDP~\eqref{eq:trace_dual}, see e.g.~\cite[Theorem~5.3]{burgdorf16} for a proof.
In addition, if the optimizer ${\newnc{\M_d}(L)}^\opt$ of SDP~\eqref{eq:trace_primal} satisfies the flatness condition, i.e., the linear functional underlying ${\newnc{\M_d}(L)}^\opt$ is $1$-flat (see Definition~\ref{def:flatextension}), then the above relaxations are exact and one has $ \Trace_{\Theta}(f) = L_{\Theta}(f) = \Trace_{\min}(f)$. This exactness result is stated in~\cite[Theorem~5.4]{burgdorf16}.

For a given nc polynomial $f = f_1 + \dots + f_p$, with $f_k \in \SymRXI{k}_{2d}$, for all $k=1,\dots,p$, we consider the following sparse variant of SDP~\eqref{eq:trace_primal}:
\begin{equation}
\label{eq:sparse_trace_primal}
\begin{aligned}
L^{\sparse}_{\Theta}(f) = \inf\limits_{\newnc{L}} \quad  & \sum_{k=1}^p \langle {\newnc{\M_d}(L,I_k)}, G_{f_k} \rangle \\	
\text{s.t.} 
\quad & ({\newnc{\M_d}(L,I_k)})_{u,v} = ({\newnc{\M_d}(L,I_k)})_{w,z}  \,, \quad \text{for all } u^\star v \cyc w^\star z \,,  \\
\quad & \newnc{L(1)} = 1 \,, \quad {\newnc{\M_d}(L,I_k)}  \succeq 0 \,,   \quad k=1,\dots,p \,,\\
\quad & \newnc{L : \RXI{1}_{2 d} + \dots + \RXI{p}_{2 d} \to \R \,
\text{ linear}}\,,
\end{aligned}
\end{equation}
whose dual is the sparse variant of SDP~\eqref{eq:trace_dual}:
\begin{equation}
\label{eq:sparse_trace_dual}
\begin{aligned}
\Trace^{\sparse}_{\Theta}(f) = \sup\limits_{\lambda} \quad  & \lambda  \\	
\text{s.t.} 
\quad & f  - \lambda \in \Theta^{\sparse}_d \,.
\end{aligned}
\end{equation}
Now, we are ready to state the sparse variant of~\cite[Theorem~5.3]{burgdorf16}.
\begin{theorem}
\label{th:sparse_trace_nogap}
Let $f \in \SymRX$ of degree $2 d$, with $f = f_1 + \dots + f_p$, $f_k \in \SymRXI{k}_{2d}$, for all $k=1,\dots,p$. 
There is no duality gap between SDP~\eqref{eq:sparse_trace_primal} and SDP~\eqref{eq:sparse_trace_dual}, namely $\Trace^{\sparse}_{\Theta}(f) = L^{\sparse}_{\Theta}(f)$.
\end{theorem}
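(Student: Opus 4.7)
The plan is to mirror the strategy used for the eigenvalue case in Theorem~\ref{th:sparse_eig_nogap}, whose key ingredient was the closedness of $\SigmaX^{\sparse}_d$ established in Proposition~\ref{prop:sparseclosed}. Weak duality between \eqref{eq:sparse_trace_primal} and \eqref{eq:sparse_trace_dual} gives $\Trace^{\sparse}_{\Theta}(f) \leq L^{\sparse}_{\Theta}(f)$ for free; the reverse inequality will come from a Hahn--Banach separation argument applied to the closed convex cone $\Theta^{\sparse}_d$ in $\RXI{1}_{2 d} + \dots + \RXI{p}_{2 d}$, exactly as in the dense case \cite[Theorem~5.3]{burgdorf16}.

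First I would prove that $\Theta^{\sparse}_d$ is closed. The natural decomposition is $\Theta^{\sparse}_d = \SigmaX^{\sparse}_d + (C^1 + \dots + C^p)$, where $C^k$ is the finite-dimensional subspace of $\SymRXI{k}_{2 d}$ consisting of polynomials cyclically equivalent to zero. I would factor through the quotient by $\sum_k C^k$ and run the compactness-by-normalisation scheme of Proposition~\ref{prop:sparseclosed} on the image of $\SigmaX^{\sparse}_d$ there. The only new ingredient needed is that, in the renormalised limit, one obtains an SOHS that is cyclically equivalent to zero; since such a polynomial evaluates at every matrix tuple to a positive semidefinite matrix with vanishing trace, it must be identically zero, ruling out the degenerate scaling and giving closedness. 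Pulling back via the continuous projection yields closedness of $\Theta^{\sparse}_d$.

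Next, assuming closedness, suppose for contradiction that $\Trace^{\sparse}_{\Theta}(f) < L^{\sparse}_{\Theta}(f)$ and pick $\lambda$ strictly between them. Then $f - \lambda \notin \Theta^{\sparse}_d$, so Hahn--Banach produces a nonzero linear functional $L$ on $\RXI{1}_{2 d} + \dots + \RXI{p}_{2 d}$ with $L(\Theta^{\sparse}_d) \subseteq \R^{\geq 0}$ and $L(f) < \lambda L(1)$. Since $\pm[u,v] \in \Theta^k_d$ for $u,v \in \RXI{k}$ of appropriate degree, $L$ is tracial on each $\RXI{k}_{2 d}$, which encodes the commutator constraints in \eqref{eq:sparse_trace_primal}; Lemma~\ref{lemma:Hankel} delivers ${\newnc{\M_d}(L,I_k)} \succeq 0$ for each $k$. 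After normalising to $L(1) = 1$, $L$ is feasible for \eqref{eq:sparse_trace_primal} with objective at most $\lambda$, contradicting $\lambda < L^{\sparse}_{\Theta}(f)$.

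The main obstacle is the closedness step, where the commutator components are not norm-controlled by the SOHS parts; the quotient-and-renormalise trick above is the cleanest way I see to sidestep this. A secondary technical point is securing $L(1) > 0$ before normalising: if $L(1) = 0$ then the PSD condition on each ${\newnc{\M_d}(L,I_k)}$ forces the first row and column to vanish, which in combination with the separation inequality $L(f) < \lambda L(1) = 0$ can be ruled out by adding a small multiple of a strictly feasible functional (for instance, $g \mapsto \sum_k \Trace g(\underline A^{(k)})$ for a fixed nonzero tuple $\underline A^{(k)}$) before applying Hahn--Banach, in the spirit of Proposition~\ref{prop:constr_eigmin_slater}.
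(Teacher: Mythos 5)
Your proof follows the same route as the paper: weak duality plus Hahn--Banach separation against a closed cone, with the crucial input being closedness of $\Theta^{\sparse}_d$. The paper's own proof is a single sentence that attributes this closedness directly to Proposition~\ref{prop:sparseclosed}, but that attribution is not literally automatic: $\Theta^{\sparse}_d = \SigmaX^{\sparse}_d + C$ with $C := C^1 + \dots + C^p$ the subspace of sparse trace-zero polynomials, and the sum of a closed cone with a subspace need not be closed. Your quotient-through-$C$ argument, together with the observation that a nonzero sparse SOHS cannot be cyclically equivalent to zero (it would evaluate to a positive semidefinite matrix of trace zero at every tuple, forcing each square term and hence the whole polynomial to vanish), is exactly the missing ingredient needed to make the paper's cited reduction to Proposition~\ref{prop:sparseclosed} rigorous. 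Your precaution about securing $L(\bfone) > 0$ via a strictly feasible perturbation is likewise sound and consistent with how Proposition~\ref{prop:constr_eigmin_slater} is used elsewhere. In short, the proposal is correct, takes the same approach as the paper, and in fact supplies the detail that the paper's terse proof glosses over.
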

\begin{proof}
The proof of strong duality is essentially the same as the one of Theorem~\ref{th:sparse_eig_nogap}. 
It relies on the closedness of the convex cone $\Theta^{\sparse}_d$ which comes from the closedness of $\Sigma^{\sparse}_d$, proved in Proposition~\ref{prop:sparseclosed}.
\end{proof}
As for unconstrained eigenvalue optimization, one can retrieve the solution of the initial trace minimization problem under the same assumptions as Theorem~\ref{th:sparse_flat}.
This is stated in the next proposition, which is the sparse variant of~\cite[Theorem~5.4]{burgdorf16}.
\begin{proposition}
\label{prop:sparse_trace_flat}
Let $f$ be as in Theorem~\ref{th:sparse_trace_nogap}, and  assume that SDP~\eqref{eq:sparse_trace_primal} admits an optimal  solution ${\newnc{\M_d}(L)}$.
If the linear functional $L$ underlying ${\newnc{\M_d}(L)}$ satisfies the flatness (H1) and irreducibility (H2) conditions stated in Theorem~\ref{th:sparse_flat}, then
\[ \Trace^{\sparse}_{\Theta}(f) = L^{\sparse}_{\Theta}(f) = \Trace_{\min}(f) \,. \]
\end{proposition}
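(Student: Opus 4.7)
The plan is to invoke the sparse tracial GNS construction of Theorem~\ref{th:sparse_flat_tracial} on the optimal $L$ and then sandwich $L^{\sparse}_{\Theta}(f)$ between two copies of $\Trace_{\min}(f)$, combining this with the no-duality-gap statement of Theorem~\ref{th:sparse_trace_nogap}.

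First I would note that Theorem~\ref{th:sparse_trace_nogap} already yields $\Trace^{\sparse}_{\Theta}(f) = L^{\sparse}_{\Theta}(f)$, reducing the claim to showing $L^{\sparse}_{\Theta}(f) = \Trace_{\min}(f)$. The inequality $L^{\sparse}_{\Theta}(f) \leq \Trace_{\min}(f)$ is immediate from the fact that SDP~\eqref{eq:sparse_trace_primal} is a relaxation of the dense SDP~\eqref{eq:trace_primal}: any dense-feasible $L$ restricts to a sparse-feasible one with the same objective value, since principal submatrices of PSD matrices remain PSD, the sparse tracial identities form a subset of the dense ones, and the two objective functions both equal $L(f)$. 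Hence $L^{\sparse}_{\Theta}(f) \leq L_{\Theta}(f) \leq \Trace_{\min}(f)$, the last bound being the one recalled right after SDP~\eqref{eq:trace_primal}.

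For the reverse direction I would apply Theorem~\ref{th:sparse_flat_tracial} with $S = \emptyset$ to the unital tracial linear functional $L$ underlying the flat optimizer $\newnc{\M_d}(L)$. The hypotheses (H1) and (H2) hold by assumption; traciality of $L$ on $\sum_k \RXI{k}_{2d}$ is enforced by the cyclic equality constraints in SDP~\eqref{eq:sparse_trace_primal}; and $L(\Theta^{\sparse}_d) \subseteq \R^{\geq 0}$ follows from PSDness of each submatrix $\newnc{\M_d}(L, I_k)$ via Lemma~\ref{lemma:Hankel}. The theorem then produces tuples $\underline{A}^{(j)} \in \Sbb_r^n$ and positive scalars $\lambda_j$ with $\sum_j \lambda_j = 1$ such that $L(g) = \sum_j \lambda_j \Trace g(\underline{A}^{(j)})$ for all $g \in \sum_k \RXI{k}_{2d}$. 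Applying this to $f$ and using the identity $L(f_k) = \langle \newnc{\M_d}(L, I_k), G_{f_k} \rangle$ for each $k$ (since $f_k = \W_d(I_k)^\star G_{f_k} \W_d(I_k)$ in the variables of $I_k$), one obtains
\[
L^{\sparse}_{\Theta}(f) = \sum_k L(f_k) = L(f) = \sum_j \lambda_j \Trace f(\underline{A}^{(j)}) \geq \min_j \Trace f(\underline{A}^{(j)}) \geq \Trace_{\min}(f),
\]
closing the chain of (in)equalities.

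The main obstacle I expect is the bookkeeping required to verify that the sparse cyclic constraints of SDP~\eqref{eq:sparse_trace_primal} genuinely produce a functional qualifying as tracial in the sense needed by Theorem~\ref{th:sparse_flat_tracial}. Those constraints impose cyclic identities only piecewise, on each $\RXI{k}_{2d}$, so one must argue that this piecewise traciality is precisely the notion used in the sparse tracial GNS construction rather than a stronger global traciality that the SDP does not enforce; once this is clear, the remainder of the argument reduces to the chain above.
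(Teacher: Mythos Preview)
Your proposal is correct and follows essentially the same route as the paper: invoke Theorem~\ref{th:sparse_trace_nogap} for the first equality, apply Theorem~\ref{th:sparse_flat_tracial} to the optimal $L$ to obtain the convex-combination tracial representation, and conclude $\Trace_{\min}(f)\leq L^{\sparse}_{\Theta}(f)$ from $\sum_j\lambda_j\Trace f(\underline A^{(j)})\geq\Trace_{\min}(f)$. Your upper bound $L^{\sparse}_{\Theta}(f)\leq L_{\Theta}(f)\leq\Trace_{\min}(f)$ via the relaxation chain is actually more explicit than the paper's one-line appeal to ``weak duality'', and the traciality concern you flag is a bookkeeping point that the paper's own proof also passes over.
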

\begin{proof}
The first equality comes from Theorem~\ref{th:sparse_trace_nogap}.
By Theorem~\ref{th:sparse_flat_tracial}, there exist finitely many $n$-tuples of symmetric matrices $\underline{A}^{(j)}$ and positive scalars $\lambda_j$ with $\sum_j \lambda_j = 1$ such that $L(f) =  \sum_j \lambda_j \Trace {f ( \underline{A}^{(j)} ) }$.
Since $L(f) = \sum_{k=1}^p \langle {\newnc{\M_d}(L,I_k)}, G_{f_k} \rangle = L^{\sparse}_{\Theta}(f) $ and $\Trace_{\min}(f) = \sum_j (\lambda_j \Trace_{\min}(f)) \leq \sum_j \lambda_j \Trace {f ( \underline{A}^{(j)} )} = L(f)$, one has $\Trace_{\min}(f) \leq L^{\sparse}_{\Theta}(f)$.
The desired result then follows from weak duality between SDP~\eqref{eq:sparse_trace_primal} and SDP~\eqref{eq:sparse_trace_dual}.
\end{proof}
In practice, Proposition~\ref{prop:sparse_trace_flat} allows one to derive an algorithm similar to the $\sparseeiggns$ procedure (described in Algorithm~\ref{algorithm:sparse_eig}) to find flat optimal solutions for the unconstrained trace problem.
\subsection{Constrained Trace Optimization with Sparsity}
\label{sec:constr_trace}
~\\
In this subsection, we provide the sparse tracial version of Lasserre's hierarchy to minimize the trace of a noncommutative polynomial on a semialgebraic set.
Given $f \in \SymRX$ and $S := \{g_1,\dots,g_{m} \} \subset \SymRX$ as in~\eqref{eq:DS}, let us define  $\Trace_{\min} (f, S)$ as follows:
\begin{align}
\label{eq:constr_trace}
\Trace_{\min}(f,S) := \inf \{\Trace f(\underline{A}) : \underline{A} \in {D_S} \} \,.
\end{align}
%
Since an infinite-dimensional Hilbert space does not admit a trace, we obtain lower bounds on the minimal trace by considering a particular subset of ${D_S^\infty}$.
This subset is obtained by restricting from the algebra of all bounded operators $\mathcal{B}(\mathcal{H})$ on a Hilbert space $\mathcal{H}$ to finite von Neumann algebras~\cite{Takesaki03} of type I and type II. 
We introduce $\Trace_{\min}(f,S)^{\II_1}$ as the trace-minimum of $f$ on ${D_S^{\II_1}}$.
This latter set is defined as follows (see~\cite[Definition~1.59]{burgdorf16}):
\begin{definition}
\label{def:DSII}
Let $\mathcal{F}$ be a type-$\II_1$-von Neumann algebra~\cite[Chapter~5]{Takesaki03}.
Let us define $\mathcal{D}_S^{\mathcal{F}}$ as the set of all tuples $\underline{A} = (A_1,\dots,A_n) \in \mathcal{F}^n$ making $s(\underline{A})$ a positive semidefinite operator for every $s \in S$. 
The von Neumann semialgebraic set ${D_S^{\II_1}}$ generated by $S$ is defined as
\[
{D_S^{\II_1}} := \bigcup_{\mathcal{F}} \mathcal{D}_S^{\mathcal{F}} \,,
\]
where the union is over all type-$\II_1$-von Neumann algebras with separable predual.
\end{definition} 
By~\cite[Proposition~1.62]{burgdorf16}, if $f \in \Theta_S$, then $\Trace f(\underline{A}) \geq 0$, for all $A \in {D_S}$ and $A \in {D_S^{\II_1}}$.
Since ${D_S}$ can be modeled by ${D_S^{\II_1}} $, one has $\Trace_{\min}(f,S)^{\II_1} \leq \Trace_{\min}(f,S)$.
With $d$ being defined as in Section~\ref{sec:constr_eig},
one can approximate $\Trace_{\min}(f,S)^{\II_1}$ from below via the following hierarchy of SDP programs, indexed by $s \geq d$: 
\begin{align}
\label{eq:constr_trace_dual}
\Trace_{\Theta,s}(f,S) = \sup \{ a : f - a \in \Theta_{S,d} \} \,,
\end{align}
whose dual is 
\begin{equation}
\label{eq:constr_trace_primal}
\begin{aligned}
L_{\Theta,s}(f,S) := \inf\limits_{\newnc{L}} \quad  & \langle {\newnc{\M_s}(L)}, G_f \rangle  \\	
\text{s.t.} 
\quad & ({\newnc{\M_s}(L)})_{u,v} = ({\newnc{\M_s}(L)})_{w,z}  \,, \quad \text{for all } u^\star v \cyc w^\star z \,, \\
\quad & \newnc{L(1)} = 1 \,, \\
\quad  & {\newnc{\M_s}(L)} \succeq 0 \,,  
\quad  {\newnc{\M_{s-d_j}}(g_j L)} \succeq 0 \,,    \quad j = 1,\dots,m \,,\\
\quad & \newnc{L : \RX_{2 d}  \to \R \,
 \text{ linear}}\,.
\end{aligned}
\end{equation}
If the quadratic module $M_S$ is archimedean, the resulting hierarchy of SDP programs provides a sequence of lower bounds $\Trace_{\Theta,s} (f,S)$ monotonically converging to $\Trace_{\min}(f,S)^{\II_1}$, see e.g. ~\cite[Corollary~3.5]{burgdorf16}.

Next, we present a sparse variant hierarchy of SDP programs providing a sequence of lower bounds $\Trace_{\Theta,s}^{\sparse} (f,S)$ monotonically converging to $\Trace_{\min}(f,S)^{\II_1}$.
Let $S \cup \{f\} \subseteq \SymRX$  and let ${D_S}$ be as in~\eqref{eq:newDS} with the additional quadratic constraints~\eqref{eq:additional}.  
Let us define the sparse variant of SDP~\eqref{eq:constr_trace_primal}, indexed by $s \geq d$:
\begin{equation}
\label{eq:sparse_constr_trace_primal}
\begin{aligned}
L^{\sparse}_{\Theta,s}(f,S) = \inf\limits_{\newnc{L}} \quad  & \sum_{k=1}^p \langle {\newnc{\M_s}(L,I_k)}, G_{f_k} \rangle \\	
\text{s.t.} 
\quad & ({\newnc{\M_s}(L,I_k)})_{u,v} = ({\newnc{\M_s}(L,I_k)})_{w,z}  \,, \quad \text{for all } u^\star v \cyc w^\star z \,,  \\
\quad & \newnc{L(1) = 1} \,, \\
\quad & {\newnc{\M_s}(L,I_k)} \succeq 0 \,, \quad k=1,\dots,p \,, \\ 
\quad &  {\newnc{\M_{s-d_j}}(g_jL,I_k)} \succeq 0 \,,  \quad j=1,\dots,m \,, \quad k=1,\dots,p \,, \\ 
\quad & \newnc{L : \RXI{1}_{2 d} + \dots + \RXI{p}_{2 d} \to \R \,
\text{ linear}}\,.
\end{aligned}
\end{equation}
whose dual is the sparse variant of SDP~\eqref{eq:constr_trace_dual}:
\begin{align}
\label{eq:sparse_constr_trace_dual}
\Trace_{\Theta,s}^{\sparse}(f,S) = \sup \{ a : f - a \in \Theta_{S,d}^{\sparse} \} \,,
\end{align}
With the same conditions as the ones assumed in Proposition~\ref{prop:constr_eigmin_slater} for constrained eigenvalue optimization, SDP~\eqref{eq:sparse_constr_trace_primal} admits strictly feasible solutions, so there is no duality gap between SDP~\eqref{eq:sparse_constr_trace_primal} and SDP~\eqref{eq:sparse_constr_trace_dual}.
The proof is the same since the constructed linear functional in Proposition~\ref{prop:constr_eigmin_slater} is tracial.
In order to prove convergence of the hierarchy of bounds given by the SDP~\eqref{eq:sparse_constr_trace_primal}-\eqref{eq:sparse_constr_trace_dual}, 
we need the following proposition, which is the sparse variant of~\cite[Proposition ~1.63]{burgdorf16}.
\begin{proposition}
\label{prop:sparse_tracial_psatz}
Let $S \cup \{f\} \subseteq \SymRX$  and let ${D_S}$ be as in~\eqref{eq:newDS} with the additional quadratic constraints~\eqref{eq:additional}. 
Let Assumption~\ref{hyp:sparsityRIP} hold. 
Then the following are equivalent:
\begin{itemize}
\item[(i)] $\Trace f(\underline{A}) \geq 0$ for all $\underline{A} \in {D_S^{\II_1}}$;
\item[(ii)] for all $\varepsilon > 0$, there exists $g \in {\cM(S)^{\sparse}}$ with $f + \varepsilon \cyc g$.
\end{itemize}
\end{proposition}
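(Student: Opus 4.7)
The plan is to mirror the proof of Theorem~\ref{th:sparsePsatz}, replacing $C^\star$-algebras with finite tracial von Neumann algebras, and upgrading Theorem~\ref{th:amalgamation} to Voiculescu's amalgamated free product for $\II_1$ von Neumann algebras over a common trace-preserving subalgebra~\cite{Voi83}.

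Direction (ii) $\Rightarrow$ (i) is routine: given $g = \sum_{k=1}^p g_k$ with $g_k \in \cM(S)^k$ and $f + \varepsilon \cyc g$, fix $\underline A \in D_S^{\II_1}$ sitting inside a finite von Neumann algebra $(\mathcal F,\tau)$. Each $g_k(\underline A)$ is a finite sum of terms $a_i(\underline A)^\star s_i(\underline A) a_i(\underline A)$ with $s_i(\underline A) \succeq 0$, hence a positive operator, so $\tau(g(\underline A)) \geq 0$; since $\tau$ vanishes on commutators, $\Trace f(\underline A) + \varepsilon \geq 0$, and letting $\varepsilon \searrow 0$ yields (i).

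For (i) $\Rightarrow$ (ii), I argue by contradiction: suppose there is $\varepsilon > 0$ with $f + \varepsilon \notin \Theta_S^{\sparse}$. Archimedeanity of $\cM(S)^{\sparse}$, provided by~\eqref{eq:additional} together with the RIP decomposition, makes $\bfone$ an algebraic interior point of $\Theta_S^{\sparse} \supseteq \cM(S)^{\sparse}$, so Hahn--Banach separation (Eidelheit--Kakutani, as in the proof of Theorem~\ref{th:sparsePsatz}) produces a linear functional $L : \RX \to \R$ with $L(\Theta_S^{\sparse}) \subseteq \R^{\geq 0}$, $L(f + \varepsilon) \leq 0$, and $L(\bfone) = 1$. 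For each $k$ the restriction $L^k := L|_{\RXI k}$ is positive on $\cM(S)^k$, and it is tracial in the sense that $L^k(u^\star u) = L^k(u u^\star)$ for every $u \in \RXI k$: indeed $u^\star u - u u^\star$ is symmetric and cyclically equivalent to $0 \in \cM(S)^k$, so both it and its negative lie in $\Theta_S^k \subseteq \Theta_S^{\sparse}$. The functional $L^{jk} := L|_{\R\langle \underline X(I_j \cap I_k)\rangle}$ is likewise tracial and agrees with the restrictions of both $L^j$ and $L^k$.

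Next I apply the tracial GNS construction to each $L^k$: the sesquilinear form $\langle a,b\rangle_k := L^k(a^\star b)$ descends to an inner product on the quotient of $\RXI k$ by its null ideal (two-sided by traciality), and archimedeanity of $\cM(S)^k$ ensures that left multiplications by the $X_i$, $i \in I_k$, extend to bounded operators on the completion. Taking the bicommutant yields a finite tracial von Neumann algebra $(\mathcal F^k, \tau_k)$ with canonical tuple $\hat A^k$ satisfying $\tau_k(g(\hat A^k)) = L^k(g)$ for all $g \in \RXI k$; the square-root argument from the last paragraph of the proof of Theorem~\ref{th:sparsePsatz} shows that $\hat A^k$ fulfils the constraints indexed by $J_k$. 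Analogously one produces $(\mathcal F^{jk}, \tau^{jk}, \hat A^{jk})$ together with state-preserving $\star$-embeddings $\mathcal F^{jk} \hookrightarrow \mathcal F^j, \mathcal F^k$. Invoking Voiculescu's amalgamated free product in the tracial category~\cite{Voi83} and inducting on $p$ exactly as in the proof of Theorem~\ref{th:sparsePsatz}---the RIP~\eqref{eq:RIP} guaranteeing at each stage that the newly added $\mathcal F^{m+1}$ meets the already assembled piece in a single $\mathcal F^{k_0}$ with $k_0 \leq m$---assembles a finite tracial von Neumann algebra $(\mathcal F, \tau)$ into which every $\mathcal F^k$ embeds trace-preservingly. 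Setting $A_i := j_k(\hat A_i^k)$ for $i \in I_k$ (well-defined by the amalgamation property) gives $\underline A \in D_S^{\II_1}$ (after, if necessary, tensoring with the hyperfinite $\II_1$ factor to guarantee type $\II_1$) with $\tau(f(\underline A)) = L(f) \leq -\varepsilon$, contradicting (i).

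The hard part is exactly the upgrade of Theorem~\ref{th:amalgamation} to the tracial setting: one needs the amalgam to be a finite tracial von Neumann algebra whose trace restricts to the prescribed $\tau_k$ on each piece, and this is precisely the content of Voiculescu's amalgamated free product for $\II_1$ algebras over a common trace-preserving subalgebra. Once that is in hand, the RIP induction and the verification $\underline A \in D_S^{\II_1}$ transfer verbatim from the proof of Theorem~\ref{th:sparsePsatz}.
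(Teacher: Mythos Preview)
Your proof is correct and follows essentially the same route as the paper's: Hahn--Banach separation, tracial GNS on each block yielding finite von Neumann algebras, then amalgamation in the tracial von Neumann category (the paper cites~\cite{VDN} rather than~\cite{Voi83}) along the RIP induction of Theorem~\ref{th:sparsePsatz}. Your write-up is in fact more careful than the paper's sketch on two points: you separate from $\Theta_S^{\sparse}$ rather than $\cM(S)^{\sparse}$, which is what actually forces $L^k$ to be tracial, and you note the hyperfinite $\II_1$ tensor trick to land in $D_S^{\II_1}$.
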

\begin{proof}
The implication (ii) $\implies$ (i) is trivial.
For the converse implication, let us fix $\varepsilon > 0$ such that the conclusion of (ii) does not hold.
By the Hahn-Banach separation theorem, there exists a linear functional $L : \SymRX \to \R$ with $L(f + \varepsilon) \leq 0 $ and $L({\cM(S)^{\sparse}}) \subseteq \R^{\geq 0}$.
As in Theorem~\ref{th:sparsePsatz},
the GNS construction 
leads to operator algebras $\mathcal A_k$, $\mathcal A_{jk}$ for $j,k=1,\ldots,p$ and $j\neq k$, with $\mathcal A_{jk}\subseteq\mathcal A_j,\mathcal A_k$. However, in this case the GNS construction yields tracial states on these, whence they are all finite von Neumann algebras. 
Now amalgamate in the category of von Neumann algebras (cf.~\cite{VDN})
to obtain a finite von Neumann algebra $\mathcal A$ with trace $\tau$
so that 
$\tau(f)\leq-\varepsilon<0$.
\end{proof}
Proposition~\ref{prop:sparse_tracial_psatz} implies the following convergence property.
\begin{corollary}
\label{coro:sparse_tracial_cvg}
Let $S \cup \{f\} \subseteq \SymRX$  and let ${D_S}$ be as in~\eqref{eq:newDS} with the additional quadratic constraints~\eqref{eq:additional}. 
Let Assumption~\ref{hyp:sparsityRIP} hold. 
Then
\[
\lim_{s \to \infty} \Trace_{\Theta,s}^{\sparse}(f,S) = \lim_{s \to \infty} L_{\Theta,s}^{\sparse}(f,S) = \Trace_{\min}(f,S)^{\II_1}
 \,.\]
\end{corollary}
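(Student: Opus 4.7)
The plan is to mirror the argument used in Corollary~\ref{th:sparse_constr_eig} for the eigenvalue case, replacing the use of Theorem~\ref{th:sparsePsatz} by its tracial counterpart Proposition~\ref{prop:sparse_tracial_psatz}. First, the chain of inequalities
\[
\Trace_{\Theta,s}^{\sparse}(f,S)\ \leq\ L_{\Theta,s}^{\sparse}(f,S)\ \leq\ \Trace_{\min}(f,S)^{\II_1}
\]
is obtained by weak duality between SDPs \eqref{eq:sparse_constr_trace_primal} and \eqref{eq:sparse_constr_trace_dual}, together with the observation that for any tuple $\underline{A}\in D_S^{\II_1}$ with trace $\tau$ on the ambient type-$\II_1$ von Neumann algebra, the functional $L(g):=\tau(g(\underline{A}))$ is tracial, unital, nonnegative on $\Theta_{S,s}^{\sparse}$, and so furnishes a feasible point for~\eqref{eq:sparse_constr_trace_primal} with objective value $\tau(f(\underline{A}))$. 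Both bounds are monotone nondecreasing in $s$, so the limits exist.

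For the reverse inequality, fix $\varepsilon>0$ and set $a:=\Trace_{\min}(f,S)^{\II_1}-\varepsilon$. Then for every $\underline{A}\in D_S^{\II_1}$ one has $\tau((f-a)(\underline{A}))\geq \varepsilon>0$, and in particular $\Trace(f-a)(\underline{A})\geq 0$ on $D_S^{\II_1}$. Applying Proposition~\ref{prop:sparse_tracial_psatz} to $f-a$ with tolerance $\varepsilon/2$ yields $g\in \cM(S)^{\sparse}$ with $f-a+\varepsilon/2\cyc g$. Since $\cM(S)^{\sparse}=\bigcup_s \cM(S)_s^{\sparse}$, there is $s_0$ with $g\in \cM(S)_{s_0}^{\sparse}$, so $f-(a-\varepsilon/2)\in \Theta_{S,s_0}^{\sparse}$ and $a-\varepsilon/2$ is feasible for~\eqref{eq:sparse_constr_trace_dual} at level $s_0$. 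Hence
\[
\Trace_{\Theta,s_0}^{\sparse}(f,S)\ \geq\ \Trace_{\min}(f,S)^{\II_1}-\tfrac{3\varepsilon}{2}.
\]
By monotonicity this persists for all $s\geq s_0$, and letting $\varepsilon\downarrow 0$ gives $\lim_{s\to\infty}\Trace_{\Theta,s}^{\sparse}(f,S)\geq \Trace_{\min}(f,S)^{\II_1}$, which together with the weak-duality chain above yields the desired equalities.

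The main obstacle is essentially already absorbed into the statement of Proposition~\ref{prop:sparse_tracial_psatz}: namely, transporting the amalgamation step in the proof of Theorem~\ref{th:sparsePsatz} from $C^\star$-algebras to finite von Neumann algebras, so that the GNS-type construction produces a tracial state on a genuine $\II_1$-factor rather than merely a state on a $C^\star$-algebra. Once that tracial Positivstellensatz is in hand, the convergence statement follows by the routine Hahn-Banach/weak-duality argument sketched above, exactly as in the eigenvalue case.
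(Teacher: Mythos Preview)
Your proposal is correct and follows essentially the same route as the paper: weak duality gives the upper bound $\Trace_{\Theta,s}^{\sparse}(f,S)\leq L_{\Theta,s}^{\sparse}(f,S)\leq \Trace_{\min}(f,S)^{\II_1}$, and Proposition~\ref{prop:sparse_tracial_psatz} applied to $f-\Trace_{\min}(f,S)^{\II_1}$ yields, for each tolerance, a sparse cyclic certificate at some finite level, forcing the limit from below. The paper's version uses $1/m$ in place of your $\varepsilon,\varepsilon/2$ and omits your explicit justification of the second inequality via the tracial functional $L(g)=\tau(g(\underline A))$, but the arguments are otherwise identical.
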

\begin{proof}
By weak duality, one has $\Trace_{\Theta,s}^{\sparse}(f,S) \leq L_{\Theta,s}^{\sparse}(f,S) \leq \Trace_{\min}(f,S)^{\II_1}$.
In addition, Proposition~\ref{prop:sparse_tracial_psatz} implies that for each each $m \in \N$, there exists $s(m) \in \N$ such that $f - \Trace_{\min}(f,S)^{\II_1} + \frac{1}{m} \in \Theta_{S,s(m)}^{\sparse}$.
This implies that 
\[
\Trace_{\min}(f,S)^{\II_1} - \frac{1}{m} \leq  \Trace_{\Theta,s(m)}^{\sparse}(f,S) \,,
\]
yielding the desired conclusion.
\end{proof}
To extract solutions of constrained trace minimization problems, we rely on the following variant of Theorem~\ref{th:sparse_flat_tracial}. It is, in turn, the tracial analog of Theorem~\ref{th:sparse_flat}.
\begin{proposition}
\label{prop:sparse_constr_trace_flat}
Let $S  \subseteq \SymRX_{2d}$,  and assume that the semialgebraic set ${D_S}$ is as in~\eqref{eq:newDS} with the additional quadratic constraints~\eqref{eq:additional}. 
Let Assumption~\ref{hyp:sparsityRIP}(i) hold.
Set $\delta := \max \{ \lceil \deg (g)/2 \rceil : g \in S\cup {1} \}$.
Let ${\newnc{\M_s}(L)}$ be an optimal solution of SDP~\eqref{eq:sparse_trace_primal} with value $L_{\Theta,s}^{\sparse}(f,S)$, for $s \geq d + \delta$, such that $L$ satisfies the flatness (H1) and irreducibility (H2) conditions of Theorem~\ref{th:sparse_flat}.
Then there are finitely many $n$-tuples $\underline{A}^{(j)}$ of symmetric matrices in ${D_S^r}$ for some $r \in \N$, and positive scalars $\lambda_j$ with $\sum_j \lambda_j = 1$ such that
\[
L(f) =  \sum_j \lambda_j \Trace {f ( \underline{A}^{(j)} ) } \,.
\]
In particular, one has $\Trace_{\min}(f,S) = \Trace_{\min}(f,S)^{\II_1} = L_{\Theta,s}^{\sparse}(f,S)$.
\end{proposition}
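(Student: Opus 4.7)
The plan is to recognize that the optimal $L$ is essentially a hypothesis-ready input for Theorem~\ref{th:sparse_flat_tracial}, extract a tracial convex combination from it, and then close the loop via weak duality.

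\textbf{Step 1 (verifying the hypotheses of Theorem~\ref{th:sparse_flat_tracial}).}
First I would observe that any feasible $L$ for SDP~\eqref{eq:sparse_constr_trace_primal} is automatically unital and tracial on $\RXI{1}_{2s} + \dots + \RXI{p}_{2s}$: unitality comes from the constraint $L(1)=1$, while traciality is built into the SDP through the cyclic-equality constraints $(\M_s(L,I_k))_{u,v} = (\M_s(L,I_k))_{w,z}$ whenever $u^\star v \cyc w^\star z$. Next, set $d' := s - \delta \geq d$. The positivity of the sparse Hankel and localizing blocks $\M_s(L,I_k) \succeq 0$ and $\M_{s-d_j}(g_j L, I_k) \succeq 0$ translates (via the sparse analogue of Lemma~\ref{lemma:Hankel} applied within each $\RXI{k}$) into $L(h^\star h) \geq 0$ for $h \in \RXI{k}_{d'+\delta}$ and $L(h^\star g_j h) \geq 0$ for $h \in \RXI{k}_{d'+\delta - d_j}$ with $g_j \in \SymRXI{k}$. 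Combined with traciality, this shows $L\bigl(\Theta^{\sparse}_{S,d'}\bigr) \subseteq \R^{\geq 0}$. By hypothesis $L$ also satisfies (H1) and (H2), so every assumption of Theorem~\ref{th:sparse_flat_tracial} is met with degree parameter $d'$.

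\textbf{Step 2 (applying the sparse tracial representation).}
Applying Theorem~\ref{th:sparse_flat_tracial} to $L$ yields finitely many $n$-tuples $\underline{A}^{(j)} \in D_S^r$ and positive scalars $\lambda_j$ with $\sum_j \lambda_j = 1$ such that
\[
L(g) = \sum_j \lambda_j \Trace g(\underline{A}^{(j)})
\quad \text{for all } g \in \RXI{1}_{2d'} + \dots + \RXI{p}_{2d'}.
\]
Since $d' \geq d \geq \lceil\deg f/2\rceil$ and $f = f_1 + \dots + f_p$ with $f_k \in \SymRXI{k}_{2d}$, the polynomial $f$ lies in this space, so in particular $L(f) = \sum_j \lambda_j \Trace f(\underline{A}^{(j)})$.

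\textbf{Step 3 (closing the inequalities).}
Each $\underline{A}^{(j)}$ belongs to $D_S \supseteq D_S^r$, so $\Trace f(\underline{A}^{(j)}) \geq \Trace_{\min}(f,S)$. Taking the convex combination gives
\[
L_{\Theta,s}^{\sparse}(f,S) = L(f) = \sum_j \lambda_j \Trace f(\underline{A}^{(j)}) \geq \Trace_{\min}(f,S).
\]
On the other hand, weak duality between SDP~\eqref{eq:sparse_constr_trace_primal} and SDP~\eqref{eq:sparse_constr_trace_dual}, together with the standard inclusion $\Trace_{\min}(f,S)^{\II_1} \leq \Trace_{\min}(f,S)$ and the fact that any feasible scalar $a$ for the dual lower-bounds the $\II_1$-minimum, gives
\[
L_{\Theta,s}^{\sparse}(f,S) \leq \Trace_{\min}(f,S)^{\II_1} \leq \Trace_{\min}(f,S).
\]
Sandwiching forces $\Trace_{\min}(f,S) = \Trace_{\min}(f,S)^{\II_1} = L_{\Theta,s}^{\sparse}(f,S)$, which is the desired conclusion.

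The main obstacle I anticipate is bookkeeping in Step~1: one needs to confirm that the SDP block constraints really do deliver positivity of $L$ on each piece $\Theta^{k}_{S,d'}$ (not just on the global cyclic module) and that the shift from the given degree $s$ to the parameter $d'$ expected by Theorem~\ref{th:sparse_flat_tracial} is legitimate. Once the degree/partition accounting is correct, the remaining arguments are a direct translation of the dense case and the weak-duality sandwich.
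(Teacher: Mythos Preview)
Your approach is the paper's own: invoke Theorem~\ref{th:sparse_flat_tracial} to obtain the tracial convex-combination representation, then sandwich with the relaxation inequality. The paper does not write out a proof for this proposition, but the argument it gives for the unconstrained analog (Proposition~\ref{prop:sparse_trace_flat}) is exactly this, and your Step~1 degree bookkeeping (passing from $s$ to $d'=s-\delta$) is in fact more careful than anything the paper spells out.

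One genuine slip in Step~3: the ingredients you list do not combine to what you claim. Weak duality between~\eqref{eq:sparse_constr_trace_primal} and~\eqref{eq:sparse_constr_trace_dual} gives $\Trace_{\Theta,s}^{\sparse}(f,S) \leq L_{\Theta,s}^{\sparse}(f,S)$, and the fact that dual-feasible scalars lower-bound the $\II_1$-minimum gives $\Trace_{\Theta,s}^{\sparse}(f,S) \leq \Trace_{\min}(f,S)^{\II_1}$. From these two inequalities alone you cannot deduce $L_{\Theta,s}^{\sparse}(f,S) \leq \Trace_{\min}(f,S)^{\II_1}$; the arrows point the wrong way. The correct (and standard) argument is direct: for any $\underline{A} \in D_S^{\II_1}$ the tracial functional $g \mapsto \Trace g(\underline{A})$ is feasible for the primal~\eqref{eq:sparse_constr_trace_primal}, hence $L_{\Theta,s}^{\sparse}(f,S) \leq \Trace f(\underline{A})$, and taking the infimum over $\underline{A}$ yields $L_{\Theta,s}^{\sparse}(f,S) \leq \Trace_{\min}(f,S)^{\II_1}$. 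With this fix your sandwich closes and the proof is complete.
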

As in the dense case~\cite[Algorithm~5.1]{burgdorf16}, one can rely on Proposition~\ref{prop:sparse_constr_trace_flat} to provide a randomized algorithm to look for flat optimal solutions for the constrained trace problem~\eqref{eq:constr_trace}.
\section{Numerical Experiments}
\label{sec:bench}

The aim of this section is to provide experimental comparison  between the bounds given by the dense relaxations 
(using $\eigmin$ under $\ncsostools$)
and the ones produced by our sparse variants.
For the sake of conciseness, we focus on minimal eigenvalue computation.\\
In Section~\ref{sec:benchsunc} we focus on the unconstrained case.  For a given nc polynomial $f$ of degree $2 d$, we compare the smallest eigenvalue $\lambda_{\min}(f) = \lambda_{\min,d}(f) = L_{\sohs,d}(f)$  computed via SDP~\eqref{eq:eigmin_primal} (or equivalently SDP~\eqref{eq:eigmin_dual}) with $\lambda^{\sparse}_{\min,d}(f) = L^{\sparse}_{\sohs,d}(f)$, computed via SDP~\eqref{eq:sparse_eigmin_primal} (or equivalently SDP~\eqref{eq:sparse_eigmin_dual}). \\
In Section~\ref{sec:benchsconstr} we focus on the {constrained} case.  
We compare the values of {$\lambda_s(f,S) = L_s(f,S)$}, obtained in the dense setting via SDP~\eqref{eq:constr_eigmin_primal} (or equivalently SDP~\eqref{eq:constr_eigmin_dual}), with the values of {$\lambda^{\sparse}_s(f) = L^{\sparse}_{s}(f)$}, obtained in the sparse setting via SDP~\eqref{eq:sparse_constr_eigmin_primal} (or equivalently SDP~\eqref{eq:sparse_constr_eigmin_dual}), for various sets of constraints $S$ and increasing values of the relaxation order $s$.

The resulting algorithm, denoted by $\eigminsparse$, is currently implemented in $\ncsostools$~\cite{cafuta2011ncsostools}. 
This software library is available within Matlab and interfaced with the SDP solver Mosek 8.1~\cite{moseksoft}, which turned out to yield better performance than SeDuMi 1.3~\cite{Sturm98usingsedumi}.
All numerical results were obtained  using a 
 cluster available at 
the Faculty of mechanical engineering,  University of Ljubljana, which has 30 TFlops computing performance. For our computations we used only one computing node which consisted of 2 Intel Xeon X5670 2,93GHz processors, each with 6 computing cores; 48 GB DDR3 memory; 500 GB hard drive.  
We ran Matlab in a plain (sequential) mode, without imposing any paralelization.

\subsection{Unconstrained Optimization}
\label{sec:benchsunc}
In Table~\ref{table:unc}, we report results obtained for minimizing the eigenvalue of the nc variants of the following functions:
\begin{itemize}
\item The chained singular function~\cite{conn1988testing}:
\[
f_{\text{cs}} := \sum_{i \in J} ( (X_i + 10 X_{i+1})^2 + 5 (X_{i+2} - X_{i+3})^2 + (X_{i+1} - 2 X_{i+2})^4 + 10 (X_{i} - X_{i+3})^4) \,,
\]
where $J = \{1,3,4,\dots,n-3 \}$ and $n$ is a multiple of 4.
In this case, one can choose $I_k = \{k,k+1,k+2,k+3 \}$ for all $k=1,\dots,n-3$ so that the associated sparsity pattern satisfies~\eqref{eq:RIP}.
\item The generalized Rosenbrock function~\cite{nash1984newton}:
\[
f_{\text{gR}} := 1 + \sum_{i=1}^{n-1} \Big(100 (X_{i+1} - X_{i}^2)^2 + (1 - X_{i+1})^2 \Big) \,.
\]
In this case, one can choose $I_k = \{k,k+1 \}$ for all $k=1,\dots,n-1$ so that the associated sparsity pattern satisfies~\eqref{eq:RIP}.
\end{itemize}
\begin{table}[!t]
\begin{center}
\caption{$\eigmin$ vs $\eigminsparse$ for unconstrained minimal eigenvalues of the chained singular and generalized Rosenbrock functions.}
\begin{tabular}{l|r|rrcr|rrcr}
\hline
\multirow{2}{*}{$f$} & \multirow{2}{*}{$n$}  & \multicolumn{4}{c|}{$\eigmin$} & \multicolumn{4}{c}{$\eigminsparse$} \\
& & $\msdp$ & $\nsdp$ & $\lambda_{\min,2}(f)$ & time (s) & $\msdp$ & $\nsdp$ & $\lambda^{\sparse}_{\min,2}(f)$ & time (s) \\
\hline
\multirow{6}{*}{$f_{\text{cs}}$} & 4 & 78 & 169 & 0 & 0.42 & 78 &169 & 0 & 0.37 \\
& 8  & 398 & 841 & 0 & 1.33 & 165 & 1323 & 0 & 3.69 \\
& 12 &974 &2025 & 0 & 4.35 & 298 & 2205 & 0 & 6.28 \\
& 16 &1806 &3721  & 0 &14.29  & 413 & 3087 & 0 & 9.18 \\
& 20 &2894 &5929  & 0 &52.47  & 537 & 3969 & 0 & 12.78 \\
& 24 &4238 &8649 & 0 &152.17 & 661 & 4851 & 0 & 17.65 \\  
\hline
\multirow{6}{*}{$f_{\text{gR}}$} & 10 & 200 & 400 & 0 &  0.56 & 95 & 441 & 0 & 1.39 \\
& 12 & 288 &576 & 0 & 0.81  & 117 & 539 & 0 & 1.78 \\
& 14 & 392 & 784 & 0 & 1.12  & 139 & 637 & 0 & 2.20 \\
& 16 & 512 &1024 & 0 & 1.46  & 161 & 735 & 0 & 2.67 \\
& 18 & 648 &1296 & 0 & 2.15  & 183 & 833 & 0 & 3.26 \\   
& 20 & 800 &1600  & 0 & 2.92  & 205 & 931 & 0 & 4.10 \\
\hline
\end{tabular}
\label{table:unc}
\end{center}
\end{table}
We compute bounds on the minimal eigenvalues of $f = f_{\text{cs}}$ for each $n \in \{4,\dots,24\}$ being a multiple of 4, and $f_{\text{gR}}$ for even values of $n \in \{2,\dots,20\}$.
For both functions, the minimal eigenvalue is 0.
We indicate in Table~\ref{table:unc} the data related to the semidefinite programs solved by Mosek.
For each value of $n$, $\msdp$ stands for the total number of constraints and $\nsdp$ stands for the total number of variables either of the SDP program~\eqref{eq:eigmin_primal} solved to compute $\lambda_{\min}(f)$ or the SDP program~\eqref{eq:sparse_eigmin_primal} solved to compute $\lambda_{\min,2}^{\sparse}(f)$.
As emphasized in the columns corresponding to $\msdp$, the size of the SDP programs can be significantly reduced after exploiting sparsity, which is consistent with Remark~\ref{rk:sparsevsdense}.
While the procedure $\eigmin$ does not take sparsity into account, it relies on the Newton chip method~\cite[\S2.3]{burgdorf16} to reduce the number of variables involved in the Hankel matrix from SDP~\eqref{eq:eigmin_primal}. 
This explains why $\nsdp$ is smaller for some values of $n$ (e.g. $n = 8$ for $f_{\text{cs}}$) when running $\eigmin$. 
However, the sparse procedure $\eigminsparse$ turns out to be very often more efficient to compute the minimal eigenvalue. 
So far, our $\eigminsparse$ procedure is limited by  the computational abilities of current SDP solvers (such as Mosek) to handle matrices with more constraints and variables than the ones obtained e.g. for the chained singular function at $n = 24$ (see the related values of $\msdp$ and $\nsdp$ in the corresponding column).
It turns out that exploiting the sparsity pattern yields SDP programs with significantly fewer variables than the ones obtained after running the Newton chip method. \\
In the column reporting timings, we indicate the time needed to prepare \emph{and} solve the SDP relaxation. 
For values of $n,d \gtrsim 8$, our current implementation in (interpreted) Matlab happens to be rather inefficient to construct the SDP problem itself, mainly because we rely on a naive nc polynomial arithmetic.
To overcome this computational burden, we plan to interface $\ncsostools$ with a \texttt C library implementing a more sophisticated monomial arithmetic. 
%
We also emphasize that for these unconstrained problems, each function is a sum of sparse hermitian squares, thus the sparse procedure $\eigminsparse$ always retrieves the same optimal value as the dense procedure $\eigmin$.
However, the bound computed via the sparse procedure can be a strict lower bound of the minimal eigenvalue, as shown in Lemma~\ref{lemma:nosparseHelton}.

\subsection{Constrained Optimization}
\label{sec:benchsconstr}
\begin{table}[!t]
\begin{center}
\caption{$\eigmin$ vs $\eigminsparse$ for minimal eigenvalue of the chained singular function on the nc polydisc $S_{\text{cs}}$.}
\begin{tabular}{r|rrcr|rrcr}
\hline
\multirow{2}{*}{$n$}  & \multicolumn{4}{c|}{$\eigmin$} & \multicolumn{4}{c}{$\eigminsparse$} \\
& $\msdp$ & $\nsdp$ & $\lambda_{2}(f_{\text{cs}},S_{\text{cs}})$ & time (s) & $\msdp$ & $\nsdp$ & $\lambda^{\sparse}_{2}(f_{\text{cs}},S_{\text{cs}})$ & time (s) \\
\hline
4 & 161 & 641 & 315.21 & 3.25   & 161    &  641  & 315.21& 2.95 \\
8 & 1009 & 6625 & 965.48 & 146.99 & 525  &1923   &  965.48 & 4.66 \\
12 & 3121 & 28705 & 1615.7 & 7891.6 & 889 &  3205 &  1615.7 & 7.43 \\
16 & \multicolumn{4}{c|}{$-$} &1253  &4487  & 2266.05 & 13.20 \\
20 & \multicolumn{4}{c|}{$-$} & 1617  & 5769 &2916.32 & 18.50 \\
24 & \multicolumn{4}{c|}{$-$} & 1981   & 7051 & 3566.56 & 26.38\\
\hline
\end{tabular}
\label{table:polydisc}
\end{center}
\end{table}
In Table~\ref{table:polydisc}, we report results obtained for minimizing the eigenvalue of the nc chained singular function on the semialgebraic set $S_{\text{cs}} := \{1 - X_1^2,\dots, 1 - X_n^2, X_1 - 1/3,\dots,X_n - 1/3 \}$ for $n \in \{4,8,12,16,20,24\}$.
Since $f$ has degree $4$, it follows from~\cite[Corollary~4.18]{burgdorf16} that it is enough to solve SDP~\eqref{eq:sparse_eigmin_primal} with optimal value $\lambda_{2}(f,S_{\text{cs}})$ to compute the minimal eigenvalue $\lambda_{\min} (f,S_{\text{cs}})$. 
For the experiments described in Table~\ref{table:polydisc}, we cannot rely on the Newton chip method as in the unconstrained case. 
Thus the dense procedure $\eigmin$ suffers from a severe computational burden for $n > 10$; the symbol ``$-$'' in a column entry indicates that
the calculation did not finish in a couple of hours. 
As already observed before for the unconstrained case, the sparse procedure $\eigminsparse$ performs much better than $\eigmin$.
Surprisingly, $\eigminsparse$ yields the same bounds as $\eigmin$ at the minimal relaxation order $s = 2$, for all values of $n \leq 10$. 

As shown in Example~\ref{ex:nosparseEigBall}, there is no guarantee to obtain the above mentioned convergence behavior in a systematic way.
We consider randomly generated cubic $n$-variate polynomials  $f_{\text{rand}}$ satisfying Assumption~\ref{hyp:sparsityRIP} with $I_k = \{k,k+1,k+2\}$, for all $k=1,\dots,n-2$.
The corresponding nc polyball is given by $\Bbb^{\sparse} := \{ 1 - X_1^2 - X_2^2 - X_3^2,\dots, 1 - X_{n-2}^2 - X_{n-1}^2 - X_n^2\}$.
In Table~\ref{table:polyball}, we report results obtained for minimizing the eigenvalue of $f_{\text{rand}}$ on  $\Bbb^{\sparse}$, for each value of $n \in \{4,\dots,10\}$.
Here again, the sparse procedure $\eigminsparse$ yields better performance than $\eigmin$.
Moreover, the sparse bound obtained for each $n \leq 10$ at minimal relaxation order $s = 2$ already gives an accurate approximation of the optimal bound provided by the dense procedure.
We emphasize that the value of the third order relaxation obtained with the sparse procedure is almost equal to the optimal bound.
In addition, the dense procedure cannot handle to solve the minimal order relaxation for $n > 10$, while we can always obtain a lower bound of the eigenvalue with $\eigminsparse$.
\begin{table}[!t]
\begin{center}
\caption{$\eigmin$ vs $\eigminsparse$ for minimal eigenvalue of random cubic polynomials on the nc polyball $S = \Bbb^{\sparse}$.}
\begin{tabular}{r|rrcr|rrrcr}
\hline
\multirow{2}{*}{$n$}  & \multicolumn{4}{c|}{$\eigmin$} & \multicolumn{5}{c}{$\eigminsparse$} \\
& $\msdp$ & $\nsdp$ & $\lambda_{2}(f_{\text{rand}},S)$ & time (s) & $s$ & $\msdp$ & $\nsdp$ & $\lambda^{\sparse}_{s}(f_{\text{rand}},S)$ & time (s) \\
\hline
\multirow{2}{*}{4}  & \multirow{2}{*}{71}  & \multirow{2}{*}{491}  & \multirow{2}{*}{-53.64}  & \multirow{2}{*}{3.31} & 2 & 79 & 370 & -53.72 & 1.18\\
&  &  &  &  & 3 & 729 & 3538 & -53.64 & 12.64 \\
\hline
\multirow{2}{*}{6}  & \multirow{2}{*}{239}  & \multirow{2}{*}{2045}  & \multirow{2}{*}{-142.52}  & \multirow{2}{*}{26.79} & 2 & 179 & 740 & -142.62 & 2.33 \\
&  &  &  &  & 3 & 1535 & 7076 & -142.52 & 29.52\\
\hline
\multirow{2}{*}{8}  & \multirow{2}{*}{559}  & \multirow{2}{*}{5815}  & \multirow{2}{*}{-165.89}  & \multirow{2}{*}{171.30} & 2 & 279 & 1110 & -166.32 & 3.73 \\
&  &  &  &  & 3 & 2341 & 10614 & -165.91 & 62.70 \\
\hline    
\multirow{2}{*}{10}  & \multirow{2}{*}{1079}  & \multirow{2}{*}{13289}  & \multirow{2}{*}{-199.62}  & \multirow{2}{*}{857.95} & 2 & 379 & 1480 &-200.51 & 5.43\\
&  &  &  &  & 3 & 3147 &  14152   &  -199.66 & 139.22 \\
\hline
\multirow{2}{*}{11}  & \multirow{2}{*}{1429}  & \multirow{2}{*}{18985}  & \multirow{2}{*}{-180.39}  & \multirow{2}{*}{ 2111.26} & 2 & 429 &  1665  & -180.93  & 6.58\\
&  &  &  &  & 3 & 3550 &  15921  & -180.40 & 209.73 \\ 
\hline
\multirow{2}{*}{12}  & \multirow{2}{*}{-}  & \multirow{2}{*}{-}  & \multirow{2}{*}{-}  & \multirow{2}{*}{ -} & 2 & 479    & 1850  &  -385.89 & 7.82\\
&  &  &  &  & 3 &  3953    & 17690   &  -384.87 &  289.12\\
\hline
\multirow{2}{*}{16}  & \multirow{2}{*}{-}  & \multirow{2}{*}{-}  & \multirow{2}{*}{-}  & \multirow{2}{*}{ -} & 2 & 679   & 2590    & -344.31 & 15.46\\
&  &  &  &  & 3 & 5565  & 24766 & -342.15 &  975.43\\ 
\hline
\multirow{2}{*}{20}  & \multirow{2}{*}{-}  & \multirow{2}{*}{-}  & \multirow{2}{*}{-}  & \multirow{2}{*}{ -} & 2 & 879  & 3330 & -504.36 &  31.41\\
&  &  &  &  & 3 & 7177  & 31842 & -503.02 &  2587.61\\
\hline
\end{tabular}
\label{table:polyball}
\end{center}
\end{table}

\section{Conclusion and Perspectives}
\label{sec:concl}

We have presented a sparse variant of Putinar's Positivstellensatz for  positive noncommutative polynomials, yielding a converging hierarchy of semidefinite relaxations for eigenvalue and trace optimization. 
We also designed a general algorithm to extract solutions of such sparse problems, thanks to a sparse variant of the Gelfand-Naimark-Segal construction
and amalgamation properties of operator algebras.
Experimental results obtained with $\ncsostools$ prove that one can obtain accurate lower bounds via these semidefinite relaxations in an efficient way.

An obvious direction of further research is to investigate whether and how one can benefit from sparsity exploitation in other application fields, for instance to compute certified approximations of quantum graph parameters or maximum violation bounds of Bell inequalities in quantum information theory. 
\newnc{We also intend to design a noncommutative analog of the recently developed procedures exploiting monomial term sparsity \cite{wang2,wang3,wang4}.}

We have proved that there is no sparse analog of the Helton-McCullough Sums of Squares theorem.
Thus, another interesting track of research is to look for alternative representations of sparse positive polynomials, e.g., representations involving noncommutative rational functions.

Apart from sparsity, we also intend to pursue research efforts to take into account other properties of structured noncommutative polynomials, such as symmetry.
%
\newcommand{\etalchar}[1]{$^{#1}$}

\end{document}